\theoremstyle{plain}
\tikzset{commutative diagrams/.cd}
\newtheorem{theorem}{Theorem}[section]
\newtheorem{lemma}[theorem]{Lemma}
\newtheorem{proposition}[theorem]{Proposition}
\newtheorem{Counter-example}[theorem]{Counter-example}
\newtheorem{remark}[theorem]{Remark}
\theoremstyle{definition}
\newtheorem{definition}[theorem]{Definition}
\theoremstyle{remark}
\def\R{\mathbb R}
\def\S{\mathbb S}
\def\L{\mathbb L}
\def\E{\mathbb E}
\begin{document}

\title{Cartan geometries with model the future lightlike cone of Lorentz-Minkowski spacetime}

\author{ 
Rodrigo Morón\footnote{Both authors were partially supported by the Spanish MICINN and ERDF project PID2024-156031NB-I00.
}\\ 
 \texttt{Departamento de Matemáticas, Universidad de León} \\ 
 \texttt{$rmors@unileon.es$} 
 \and 
  Francisco J. Palomo$^*$\\ 
 \texttt{Departamento de Matemática Aplicada, Universidad de Málaga} \\ 
 \texttt{$fpalomo@uma.es$} 
\date{\empty}
} 

\footnotetext{MSC 2020: 53C05, 53C50, 53C18, 53C25, 53C30.}

\maketitle

\begin{abstract}
\noindent This paper develops the theory of Cartan geometries modeled on the future lightlike cone of Lorentz–Minkowski spacetime, which we refer to as lightlike Cartan geometries. We show that such geometries naturally induce on the base manifold a lightlike metric, a globally defined radical vector field, and two additional compatible structures. Within this framework, we construct the standard tractor bundle associated with a lightlike Cartan geometry, showing that it extends the tangent bundle of the base manifold and carries a canonical metric linear connection. This construction provides an alternative characterization of lightlike Cartan geometries purely in terms of vector bundle data. Using this alternative description, we analyze the additional geometric information encoded in the Cartan connection beyond the metric and radical data, and we show how it can be extracted via natural decompositions of the standard tractor bundle. Our results underscore the intrinsic significance of Cartan connection methods in the study of lightlike geometry and open new avenues for the analysis of geometric structures that are neither parabolic nor reductive.
\end{abstract}

\noindent Key words: Cartan connections, Future lightlike cone,  Lightlike manifolds, Lorentzian geometry, Lorentz-Minkowski spacetime, Non-parabolic Cartan geometry, Non-reductive Cartan geometry, Tractor bundles.

\section{Introduction}

Cartan geometries provide a unified description of a wide variety of geometric structures. The basic idea is to associate a geometry to each homogeneous space, using it as a model. In this sense, Cartan geometries can be seen as the curved analogues of homogeneous spaces, see Section \ref{31102024B}. Very often, the description of the underlying structures on the base manifold of a principal fiber bundle equipped with Cartan connections is not easy to interpret geometrically, and determining a canonical Cartan connection for a given geometric structure is often a central and highly non-trivial problem.

This article is devoted to a specific case within this broad general theory, namely the study of the underlying geometric structure for Cartan geometries modeled on the future lightlike cone of Lorentz-Minkowski spacetime. As was shown in \cite{PacoLuz}, these Cartan geometries induce a lightlike metric and a vector field that spans its radical distribution, see also Section \ref{13333082025A}. However, as will be discussed later, these Cartan geometries are required to endow the manifold with additional geometric structures. These structures are described in Section \ref{14082025847564857}. Our interest in these Cartan geometries stems from the fact that they may be useful for studying local invariants in the intrinsic geometry of lightlike hypersurfaces.

\vspace{2mm}

At this point, let us recall that the boundaries of certain spacetimes are often lightlike hypersurfaces, such as Killing horizons and the conformal boundaries of asymptotically flat spacetimes. From a geometric perspective, a lightlike hypersurface \(N\) of a Lorentzian manifold inherits from the ambient metric tensor a degenerate bilinear form \(h\), which is positive but not definite. Its radical,
\[
\mathrm{Rad}(h) := \{ v \in TN : h(v, \cdot) = 0 \},
\]
defines a one-dimensional distribution on \(N\). Due to the existence of the radical distribution \(\mathrm{Rad}(h)\), the classical Koszul formula cannot be used to determine a linear connection on \(N\). Therefore, the so-called miracle of semi-Riemannian geometry does not work in this case. Moreover, the normal bundle of a lightlike hypersurface intersects nontrivially with its tangent bundle. Consequently, there is no natural direct sum decomposition of the tangent bundle of the ambient Lorentzian manifold along the immersion, and the classical theory of non-degenerate submanifolds does not apply in this setting.

\vspace{2mm}

These difficulties mainly arise in the study of the extrinsic geometry of lightlike hypersurfaces. Without attempting to be exhaustive, we may mention the general geometric technique developed by Bejancu and Duggal in \cite{DB96} to deal with such hypersurfaces. Their approach relies on the notion of a screen distribution: they define a non-degenerate distribution on the lightlike hypersurface $N$, starting from a transversal lightlike vector field along the hypersurface, see Section \ref{14082025847564857} for a more detailed comparison.
Degenerate submanifolds of semi-Riemannian manifolds endowed with additional geometric structures are discussed in \cite{Kupeli}. We also recall the quotient-type construction presented in \cite[Chap.~4]{One83}; see Remark~\ref{14082025hgjdkld} for further details. These ideas have led to significant results, for instance in \cite{Galloway} and \cite{Galloway2}.
From the intrinsic point of view, the local equivalence problem has been addressed in the case of three-dimensional lightlike manifolds in \cite{NuRo}. 

\vspace{2mm}

The viewpoint adopted here is intrinsic, describing lightlike manifolds—as defined in Definition $\ref{52}$—as a particular type of Cartan geometry. In this sense, the present work may be regarded as a continuation of \cite{PacoLuz} and, to a large extent, as complementary to it. 
The main goal of this paper is to provide a complete description of the structure underlying a Cartan geometry modeled on the future cone of Lorentz-Minkowski spacetime.
Later on, we shall highlight the differences with \cite{PacoLuz}, as well as the new contributions introduced here. Although this is not the perspective adopted in this article, lightlike manifolds are closely related to Carrollian geometries and other non-Lorentzian descriptions of spacetime. Models for such structures can be found in \cite{Fi}, \cite{Herfray}, and the references therein.

\vspace{2mm}

Our first task is to justify the choice of the model for the Cartan geometries considered in this paper: the future lightlike cone $\mathcal{N}^{m+1}$ in Lorentz-Minkowski spacetime $\L^{m+2}$. The choice of the future lightlike cone is motivated by the rigidity result \cite[Theorem 1.1]{BFZ}, see Section \ref{31102024A}. Among other statements, this result shows that for $m\geq 2$, the isometry group of $\mathcal{N}^{m+1}$, with respect to the lightlike metric inherited from $\L^{m+2}$, is the orthochronous Lorentz group $O^{+}(m+1, 1)$. By contrast, for a lightlike hyperplane $\Pi \subset \L^{m+1}$, the isometry group is infinite-dimensional, and hence $\Pi$ cannot serve as the model of a Cartan geometry for lightlike manifolds \cite[Example 4.13]{PacoLuz}. On the other hand, it is well known that the orthochronous Lorentz group plays a key role in the description of Riemannian  conformal geometry as a Cartan geometry. The close relationship between these two settings may be summarized by the following diagram (see Section \ref{31102024A} for details):
$$
    \begin{tikzpicture}

	\node (A) at (0,0) {$\mathcal{N}^{m+1}=O^{+}(m+1,1)/H$};
	\node (B) at (7,0) {$\S^{m}=O^{+}(m+1,1)/P$};
	\node (C) at (3.5,1.5) { $O^{+}(m+1,1)$};
	
	\draw[->] (A) -- node[midway, below] {$\pi$} (B);
	\draw[<-] (B) --  (C);
	\draw[->] (C) --  (A);
\end{tikzpicture}
$$
Here $H$ is isomorphic to the group of rigid motions of the Euclidean space $\E^{m}$, while $P$ denotes the Poincar\'{e} conformal group. The projection $\pi$ is a principal fiber bundle with structure group $\mathbb{R}_{>0} \simeq P/H$, and every section $s \in \Gamma(\mathcal{N}^{m+1})$ induces, via pullback, a Riemannian metric on the sphere $\mathbb{S}^m$ that is conformal to the standard round metric. This illustrates the strong connection between lightlike manifolds described as Cartan geometries modeled on the lightlike cone and the so-called fiber bundles of scales in Riemannian conformal geometry \cite[Section 1.6]{CS09}. Roughly speaking, the future lightlike cone $\mathcal{N}^{m+1}$ may be regarded as the bundle of rays formed by the metrics in the conformal class of the round metric on $\mathbb{S}^m$.

The future lightlike cone, when formulated as a homogeneous space as above, gives rise to a first-order Cartan geometry. The associated Cartan principal fiber bundle admits a natural interpretation as a bundle of adapted frames. It is worth recalling that the parabolic and reductive cases form the most thoroughly studied classes of Cartan geometries, supported by a well-developed and extensive theoretical framework \cite{CS09}. In contrast, the model considered here is neither parabolic nor reductive. To the best of our knowledge, this is the first systematic attempt to investigate the geometric significance of Cartan geometries of this type.

\vspace{2mm}

Recall that each Cartan geometry \((p: \mathcal{P} \to N, \omega)\) modeled on the future lightlike cone equips the manifold \(N\) with a lightlike metric \(h^{\omega}\), together with a vector field 
\(Z^{\omega} \in \mathfrak{X}(N)\) that spans the radical distribution of \(h^{\omega}\) \cite[Theorem 4.4]{PacoLuz}. The point of view adopted in this paper, similar in spirit to the framework developed for parabolic geometries in the so-called tractor calculus \cite{TC1}, is to describe such Cartan geometries equivalently in terms of the tractor vector bundles \(\mathcal{T} \to N\), obtained from the standard representation of \(O^{+}(m+1,1)\) on \(\mathbb{L}^{m+2}\), see Definition~\ref{23102024A}. A crucial observation is that a standard tractor bundle \(\mathcal{T} \to N\) may be regarded as a natural metric extension of the tangent bundle \(TN\) and is, in addition, canonically endowed with a metric-compatible linear connection \(\nabla^{\mathcal{T}}\), see Remark~\ref{220825A}. Roughly speaking, for each point \(x \in N\), the fiber \(\mathcal{T}_x\) carries a Lorentzian inner product and contains \(T_xN\) as a degenerate hyperplane embedded isometrically. This structural feature plays a central role in the developments and results presented in this paper.

\vspace{2mm}

One general feature of Cartan geometries is that the automorphism group of any Cartan geometry on a connected manifold is always a Lie group \cite[Theorem 1.5.11]{CS09}. As noted in \cite{PacoLuz}, the automorphism group \(\mathrm{Aut}(\mathcal{P}, \omega)\) of a Cartan geometry modeled on the lightlike cone over a connected manifold \(N\) is, in general, a proper subgroup of \(\mathrm{Iso}(N, h^{\omega}, Z^{\omega})\). Indeed, every \(f \in \mathrm{Aut}(\mathcal{P}, \omega)\) preserves both the lightlike metric \(h^{\omega}\) and the vector field \(Z^{\omega}\), whereas the group \(\mathrm{Iso}(N, h^{\omega}, Z^{\omega})\) may be infinite-dimensional. This naturally raises the question of whether further geometric structures can be canonically induced from a Cartan geometry modeled on the lightlike cone. This issue is addressed in Theorem~\ref{19082025hfjrfhgfujfg}. 
\begin{quote} 
Let \((p: \mathcal{P} \to N, \omega)\) be a Cartan geometry modeled on the future lightlike cone. Then the manifold \(N\) is naturally endowed with the following structures: 
\begin{enumerate} 
\item[$(1)$] A lightlike metric \(h^{\omega}\) together with a vector field \(Z^{\omega} \in \mathfrak{X}(N)\) spanning the radical distribution of \(h^{\omega}\). 
\item[$(2)$] A tensor field \(\mathbf{T}^{\omega} \in \Gamma(\Lambda^2 T^*N \otimes TN)\). 
\item[$(3)$] A map \[ D^{\omega}\colon \{ \tau \in \Omega^1(N, \mathbb{R}) : \tau(Z^{\omega}) = 1 \}\to \Gamma(T^*N \otimes TN), \] which satisfies $\tau\circ D^{\omega}(\tau)=0$ and the transformation law given in (\ref{23082025ytu}). \end{enumerate} 
\end{quote} 
Moreover, Theorem~\ref{140825A} establishes the converse: given a lightlike manifold \((N, h, Z)\), together with a tensor field \(\mathbf{T} \in \Gamma(\Lambda^2 T^*N \otimes TN)\) and a map \(D\) as described above, subject to the corresponding transformation law, there exists a unique Cartan geometry modeled on the lightlike cone over \(N\) that recovers the original structure \((N, h, Z)\). In this reconstruction, one must determine, from the tensor \(\mathbf{T}\) and for each choice of \(\tau \in \Omega^1(N, \mathbb{R})\) such that \(\tau(Z) = 1\), a Galilean connection \(\widetilde{\nabla}^\tau\) in the Leibnizian spacetime $\big(N, \tau, \left.h\right|_{\mathrm{An}\, (\tau)}\big)$. Further details are given in Remark \ref{21072025j}. The construction of such a Galilean connection relies on the key result given in \cite[Theorem 5.27]{SB03}.

\vspace{2mm}

In the following, we outline the structure of the paper and highlight several results. Section $\ref{31102024A}$ introduces the basic facts about the future lightlike cone as homogeneous space and its relationship with the Möbius sphere. This Section, adapted from \cite{PacoLuz}, is included here for the sake of completeness. Section \ref{31102024B} recalls the fundamental notions of Cartan geometries in a general context. Our main reference is \cite{CS09}, from which we adopt the notation and several key technical results. Subsection \ref{12062025} introduces the notion of the correspondence space $\mathcal{C}(M):=\mathcal{P}/H$ associated with a Cartan geometry $(p:\mathcal{P}\rightarrow M,\omega)$ of type $(G,P)$, where $H\subset P$ is a closed subgroup. The quotient $\mathcal{C}(M)$ inherits a Cartan geometry of type $(G,H)$ whose curvature satisfies a specific annihilation property. Conversely, under suitable integrability assumptions and the same curvature condition, any Cartan geometry of type $(G,H)$ locally arises as the correspondence space of a Cartan geometry of type $(G,P)$, see \cite[Theorem 1.5.14]{CS09}. We apply this result to characterize the class of lightlike manifolds that locally appear as bundles of scales over Riemannian conformal manifolds, Remark \ref{1308202594875}.
Finally, Subsection \ref{18062025A} recalls the construction of tractor bundles associated with a Cartan geometry $(p:\mathcal{P}\rightarrow M,\omega)$ of type $(G,H)$.

\vspace{2mm}

Section \ref{13333082025A} specializes to the case where $H$ and $P$ are the subgroups of $O^+(m+1,1)$ described in Section \ref{31102024A}. Subsection \ref{1308202948} recalls the definition of a lightlike manifold $(N,h)$, where $h$ is a degenerate metric with a one–dimensional radical distribution, Definition \ref{52}. When this radical is globally spanned by a vector field $Z$, the triple $(N,h,Z)$ is considered. A set $Q$ of admissible linear frames adapted to $(N,h,Z)$ is introduced, producing a reduction of the frame bundle of $N$ to the group $H$. The Subsection concludes with the following known result, \cite[Theorem 4.4]{PacoLuz}.
\begin{quote}
Every Cartan geometry $(p:\mathcal{P}\rightarrow N,\omega)$ of type $(O^+(m+1,1),H)$ naturally defines a lightlike structure $(N,h^{\omega},Z^{\omega})$, with $\mathcal{P}$ identifiable with the bundle of admissible linear frames of this structure.
\end{quote}
Such Cartan geometries are called lightlike Cartan geometries, Definition \ref{13082937465}.

In Subsection \ref{071224C}, the framework of correspondence spaces is applied to relate lightlike Cartan geometries to Riemannian conformal geometry. Starting from a Cartan geometry $(\pi:\mathcal{G}\rightarrow M,\omega)$ of type $(O^+(m+1,1),P)$, one obtains a Riemannian conformal structure $[g]$ on $M$, \cite[Section 1.6]{CS09}. Its correspondence space $\mathcal{L}=\mathcal{G}/H$ is a lightlike manifold with lightlike metric $h^{\omega}$ and radical generator $Z^{\omega}$. An explicit description shows that $\mathcal{L}$ is exactly the fiber bundle of scales associated to the conformal class $[g]$ on $M$, and that $h^{\omega}$ pulls back to the metrics in $[g]$. The lightlike metric $h^{\omega}$ is referred to as the tautological tensor in \cite{{F-G}}. The Section shows, as a direct application of \cite[Theorem 1.5.14]{CS09}, the following characterization in  Remark \ref{1308202594875}: 
\begin{quote}
A lightlike Cartan geometry $(p:\mathcal{P}\rightarrow N,\omega)$ is locally a fiber bundle of scales of a Riemannian conformal structure if and only if $$K\left(\omega^{-1}(E), \omega^{-1}(e)\right)=0
$$ for every $e\in\mathfrak{g}_{-1}.$ The element $E$ is the grading element of the decomposition $
\mathfrak{g}=\mathfrak{g}_{-1}\oplus \mathfrak{g}_{0} \oplus \mathfrak{g}_{1}.$ 
\end{quote}

\vspace{2mm}

Section \ref{27102024y} applies the general tractor bundle construction to the standard representation of $O^{+}(m+1,1)$ on $\L^{m+2}$, leading to the notion of the standard tractor bundle for a lightlike Cartan geometry $(p:\mathcal{P} \to N, \omega)$. Concretely, the standard tractor bundle is 
$
\mathcal{T}:=\mathcal{P}\times_H \L^{m+2}\to N,
$
carrying a canonical Lorentzian bundle-like metric $\bf{h}$ induced from $\langle \,, \, \rangle$ on $\L^{m+2}$, and a canonical linear tractor connection $\nabla^{\mathcal{T}}$. By construction, $\nabla^\mathcal{T}$ is compatible with $\bf{h}$. A canonical lightlike section $\xi \in \Gamma(\mathcal{T})$ is singled out by the fact that $H$ is the isotropy subgroup of a fixed lightlike vector $\ell \in \mathcal{N}^{m+1}$. The section $\xi$ plays a central role in linking the geometry of $N$ with the tractor bundle, Proposition \ref{050125F}.
\begin{quote}
There exists a morphism of vector bundles $\Phi := \nabla^\mathcal{T}\xi:TN \to \mathcal{T}$ which is a monomorphism and an isometry between $(T_xN, h^\omega_x)$ and $(\mathcal{T}_x, {\bf{h}}_x)$ for all $x\in N$, and satisfies $\Phi(Z^\omega)=\xi$.
\end{quote}
As was mentioned, this result shows that $\mathcal{T}$ contains an isometric image of the tangent bundle $TN$, with $\xi$ corresponding to the radical generator $Z^\omega$. As a consequence, $\mathcal{T}$ can be seen as an extension of $TN$ endowed with a canonical metric connection.

To capture this structure in an abstract setting, we introduce the concept of a lightlike extension vector bundle $(\mathcal{V},{\bf g},\nabla^{\mathcal{V}},\eta)$: a rank $m+2$ real vector bundle with Lorentzian metric ${\bf g}$ and compatible connection $\nabla^{\mathcal{V}}$, together with a distinguished lightlike section $\eta$ such that $ \nabla^{\mathcal{V}}\eta$ defines a monomorphism $TN \rightarrow \mathcal{V}$, Definition \ref{16102024A}. These data induce a lightlike metric and a radical-generating vector field on $N$, recovering $(h^\omega, Z^\omega)$ in the case of a standard tractor bundle, Remark \ref{14082025ytuigh}. Finally, the Section presents the converse construction: starting from $(\mathcal{V},\bf{g},\nabla^{\mathcal{V}},\eta)$, one constructs the $O^{+}(m+1,1)$-principal fiber bundle of $\bf{g}$-orthonormal frames, reduces it to an $H$-principal fiber subbundle, and then restricts the principal connection induced by $\nabla^{\mathcal{V}}$ to obtain a lightlike Cartan connection $\omega$. The following schematic diagram provides a summary of these correspondences:
$$
			\begin{tikzpicture}
				\node (A) at (0,0) {$(p:\mathcal{P}\to N, \omega)$};
				\node (B) at (4,0) {$(\mathcal{T},\mathbf{h},\nabla^{\mathcal{T}},\xi)$};
				\node (C) at (2,-1.5) {$(N,h^{\omega},Z^{\omega})$};
				\draw[<->] (A) -- (B);
				\draw[->] (B) -- (C);
				\draw[<-] (C) -- (A);
			\end{tikzpicture}
$$
Here, the double arrow denotes a bijection. In other words, each lightlike extension vector bundle over \( N \) arises as the standard tractor bundle associated with a unique lightlike Cartan geometry (up to isomorphism). This yields a precise characterization of lightlike Cartan geometries, which will serve as a framework for the remainder of the paper.

As a conclusion to the Section, Remark \ref{1308202594875} is reformulated in terms of the curvature of the tractor connection, Remark \ref{190825A}.
\begin{quote}
     A lightlike Cartan geometry $(p:\mathcal{P}\rightarrow N,\omega)$ is locally a fiber bundle of scales of a Riemannian conformal structure if and only if $$R^{\mathcal{T}}(Z^{\omega}, V)T = 0,$$ for every \(V \in \mathfrak{X}(N)\) and \(T \in \Gamma(\mathcal{T})\), where  $R^{\mathcal{T}}$ denotes the curvature tensor of the tractor connection $\nabla^{\mathcal{T}}$.
\end{quote}

\vspace{2mm}

The starting point of Section \ref{14082025847564857} is the observation that different lightlike Cartan connections may induce the same lightlike metric and the same radical-generating vector field, since these are completely determined by the soldering form, i.e., the $\mathfrak{g}_{-1}\oplus \mathfrak{z}(\mathfrak{g}_0)$ component of the connection, see Remark \ref{281224A}. Consequently, the correspondence between lightlike Cartan geometries and the underlying lightlike structures is not injective: a Cartan connection encodes additional geometric information beyond the metric data. The purpose of this Section is to use the standard tractor bundle $\mathcal{T}$ as a tool to extract and describe this extra structure. 

Subsection \ref{27102024p} begins by noting that each $\tau\in S(N,h^{\omega},Z^{\omega}):=\left\{\tau\in\Omega^1(N,\R):\tau(Z^{\omega})=1\right\}$ determines a (screen) distribution, given by the subbundle $\mathrm{An}\,(\tau):=\{v\in TN: \tau(v)=0\}$, together with an associated field of endomorphisms $P^\tau(v):=v-\tau(v)Z^{\omega}$. The standard tractor bundle admits a $\tau$-dependent splitting
$$
\mathcal{T}\ \overset{\tau}{\cong}\ \underline{\R}\oplus \mathrm{An}\,(\tau)\oplus \underline{\R}.
$$
In this splitting, the tractor connection encodes two objects depending on $\tau$: a metric linear connection $\nabla^{\tau}$ on the Riemannian vector bundle $(\mathrm{An}\,(\tau)\rightarrow N,h^{\omega})$ and a vector bundle map $D^{\tau}:TN\to \mathrm{An}\,(\tau)$, Proposition \ref{27102024A}. The heart of the Section is the description of how $(\nabla^{\tau},D^{\tau})$ changes when $\tau$ is replaced by another $1$-form $\bar\tau\in S(N,h^{\omega},Z^{\omega})$, Lemma \ref{27102024U}.

This gives rise to one of the main results of the article, in which we reveal the underlying geometric structure of a Cartan geometry modeled on the future lightlike cone in Lorentz–Minkowski spacetime, Theorem \ref{10072025A}.
\begin{quote}
    Let $(p: \mathcal{P}\to N,\omega)$ be a lightlike Cartan geometry. Then, the manifold $N$ is endowed with the following structures:
    \begin{enumerate}
        \item[$(1)$] A lightlike metric $h^{\omega}$ and a vector field $Z^{\omega}$ spanning its radical distribution. 
        \item[$(2)$] A map $\nabla^{\omega}$ that assigns to each $\tau \in S(N,h^{\omega},Z^{\omega})$ a metric linear connection $\nabla^{\tau}$ on the vector bundle $(\mathrm{An}\, (\tau)\to N,h^{\omega})$.
        
        \item[$(3)$] A map $D^{\omega}$ that assigns to each $\tau \in S(N,h^{\omega},Z^{\omega})$ a vector bundle morphism $D^{\tau}: TN \to \mathrm{An}\, (\tau)$.
    \end{enumerate}   
    The maps $\nabla^{\omega}$ and $D^{\omega}$ must obey the transformation laws described in Lemma \ref{27102024U}.
\end{quote}
The Remarks following this Theorem highlight two conceptual payoffs. Remark \ref{21072025oki} identifies an intrinsic component of the map $\nabla^{\omega}$ determined solely by the lightlike structure $(N,h^{\omega},Z^{\omega})$, while Remark \ref{130825A} shows that the maps $\nabla^{\omega}$ and $D^{\omega}$ are completely determined by their values on a single $1$-form, as a consequence of the transformation laws of Lemma \ref{27102024U}. This makes the change equations a practical synthesis tool. We conclude this Section by translating the geometric structure exhibited in Theorem \ref{10072025A} into the language of Galilean connections, which leads to the equivalent version given in Theorem \ref{19082025hfjrfhgfujfg}.

Subsection \ref{14082025utight} develops the converse construction. Starting from a lightlike manifold $(N,h,Z)$ endowed with two maps $(\nabla,D)$ satisfying the transformation laws of Lemma \ref{27102024U}, which we shall refer to as a lightlike-compatible structure (see Definition \ref{06122024A} for details), we reconstruct the standard tractor bundle $\mathcal{T}$ by gluing the split models $\underline{\R}\oplus \mathrm{An}\,(\tau)\oplus \underline{\R}$ using the canonical transition maps $F_{\tau,\bar{\tau}}$ from Proposition \ref{17102024D}. We then define a globally well-defined tractor metric ${\bf h}$ and equip $\mathcal{T}$ with a tractor connection $\nabla^{\mathcal{T}}$. The resulting triple $(\mathcal{T},{\bf h},\nabla^{\mathcal{T}})$ comes with a canonical lightlike section $\xi$ and an embedding $\nabla^{\mathcal{T}}\xi$ of $TN$ into $\mathcal{T}$. Finally, we arrive at the main result of this work, Theorem \ref{140825A}.

\begin{quote}
Lightlike Cartan geometries $(p:\mathcal{P}\to N,\omega)$ are in bijection with lightlike manifolds $(N,h,Z)$ equipped with a lightlike-compatible structure $(\nabla,D)$. In particular, the pair $(\nabla,D)$ is exactly the additional geometric content carried by the Cartan connection beyond $(h,Z)$. The following diagram summarizes precisely this correspondence:
$$
			\begin{tikzpicture}
				\node (A) at (0,0) {$(p:\mathcal{P}\to N, \omega)$};
				\node (B) at (4,0) {$(\mathcal{T},\mathbf{h},\nabla^{\mathcal{T}},\xi)$};
				\node (C) at (2,-1.5) {$(N,h,Z,\nabla,D)$};
				\draw[<->] (A) -- (B);
				\draw[<->] (B) -- (C);
				\draw[<->] (C) -- (A);
			\end{tikzpicture}
$$
\end{quote}
To conclude, Subsection \ref{14082025mqsi} applies the constructions and results of Section \ref{14082025847564857} specifically to Sasakian manifolds and degenerate hyperplanes, offering concrete and illustrative examples that showcase the behavior and interplay of the introduced geometric structures.

\vspace{2mm}

The previous Sections established that every lightlike Cartan geometry encodes, in addition to the degenerate metric $h$ and the radical generator $Z$, a richer set of data described by a pair $(\nabla, D)$, the so–called lightlike–compatible structure. However, this extra information is not unique: different Cartan connections may give rise to the same underlying lightlike structure $(h,Z)$, while the associated pairs $(\nabla, D)$ vary accordingly. This naturally leads to the question: can one single out a canonical choice of $(\nabla, D)$ by means of intrinsic geometric conditions? Section \ref{14082025ythgirthndokgv} addresses this issue by introducing a normalization scheme for a broad subclass of lightlike Cartan geometries, which mirrors, in the lightlike setting, the well–known notion of normality in conformal Cartan geometry as stated in tractor bundles terms in \cite{CG03}.

The guiding principle is that suitable algebraic contractions of the tractor curvature should eliminate precisely the degrees of freedom left undetermined by $(h,Z)$. At each stage, one imposes collinearity or vanishing conditions on specific components of the tractor curvature, and these translate directly into uniqueness statements for the lightlike–compatible structure. Starting with a lightlike extension vector bundle $(\mathcal{T},\mathbf{h},\nabla^{\mathcal{T}},\xi)$ for $(N,h,Z)$, or equivalently with a lightlike Cartan geometry $(p: \mathcal{P}\to N, \omega)$ such that $h^{\omega}=h$ and $Z^{\omega}=Z$, the procedure is as follows: 
\begin{enumerate}
    \item The condition that \( R^{\mathcal{T}}(V,W)\xi \) be collinear with \( \xi \) for every \( V, W \in \mathfrak{X}(N) \) uniquely determines the map \(\nabla\) assigning to each \( \tau \in S(N, h, Z) \) a metric linear connection \( \nabla^{\tau} \) on the vector bundle \( (\mathrm{An}\,(\tau) \to N, h) \), as stated in Proposition~\ref{21072025y}. 
    
    However, this condition on \( R^{\mathcal{T}} \) imposes a strong restriction on the vector field \( Z \) which must necessarily be homothetic, see Remark~\ref{21072025ghj}.

    A broad and relevant class of examples is provided by properly totally umbilical lightlike hypersurfaces in Lorentzian manifolds, where $Z$ is conformal and becomes homothetic after a suitable rescaling, Remark \ref{14082025hgjdkld}.

   \item The problem of determining the map $D$ is even more delicate. In order to determine \(D(Z)\), we require that \(R^{\mathcal{T}}(Z,V)\Phi(W)\) be collinear with \(\xi\) for every \(V, W \in \mathfrak{X}(N)\). Without additional curvature assumptions, however, the existence of a solution is not guaranteed in general. Nevertheless, whenever a solution exists, it is necessarily unique. Thus, in general, this condition cannot be ensured. 
   
   An important exception occurs for lightlike manifolds locally arising as bundles of scales of Riemannian conformal structures, where the condition is automatically satisfied, see Remark \ref{190825A}.
   
   Finally, for \( m \geq 3 \), we introduce a normalization condition for the remaining part of \( D \), Proposition \ref{263473621a}, which yields a formula describing each \( D^{\tau} \) as a Schouten-like tensor. 
\end{enumerate}
Taking into account all the limitations mentioned above, we can now state Theorem \ref{21082025trg}, which guarantees, under strong restrictions, the existence of a unique lightlike Cartan connection for a certain family of lightlike manifolds. As expected, Remark \ref{010820256A} emphasizes the close relationship between these normalization conditions and those arising in Riemannian conformal geometry.

\vspace{2mm}

We conclude the Introduction by emphasizing that, despite the close relationship between conformal Cartan geometry and lightlike Cartan geometry, the structures they induce on the base manifold are fundamentally different.

Furthermore, the present analysis has been carried out in full generality. We expect that under more specific geometric assumptions—for instance, when the space of integral curves $N/Z$ of the vector field $Z$ admits the structure of a smooth manifold—some of the constructions and results developed here may acquire more explicit or geometrically transparent formulations. In such settings, it is conceivable that a broader normalization condition could be identified. These issues will be explored in future work.

\section{The future lightlike cone and the M\"{o}bius  sphere  }\label{31102024A}

\subsection{Description as Klein geometries}\label{01042025A}

\noindent All the manifolds are assumed to be smooth, Hausdorff, connected and satisfying the second axiom of countability. 
Let $\L^{m+2}$ be the $(m+2)$-dimensional Lorentz-Minkowski spacetime, that is, $\L^{m+2}$ is $\R^{m+2}$ endowed
with the Lorentzian metric 
$
\langle\, ,\, \rangle =
\sum\limits_{\scriptscriptstyle i=1}^{\scriptscriptstyle m+1}dv_{i}^{2}-dv_{m+2}^{2},
$
 where
$(v_{1},\dots, v_{m+2})$ are the canonical coordinates of
$\R^{m+2}$.  We assume $m\geq 2$, unless otherwise was stated. 
The future lightlike cone is the hypersurface given by
$$
\mathcal{N}^{m+1}=\left\{v\in \L^{m+2}:\langle v , v\rangle=0, \,\, v_{m+2}> 0\right\}.
$$
At the metric level, the future lightlike cone $\mathcal{N}^{m+1}$ inherits from the Lorentzian metric $\langle \,,\,\rangle$ a degenerate symmetric bilinear form $h$, whose radical is globally spanned by the position vector field $\mathcal{Z}\in \mathfrak{X}(\mathcal{N}^{m+1})$, defined by $\mathcal{Z}_v=(v)_{v}$, where $(\,\,)_{v}$ denotes the usual identification between $T_{v}\R^{m+2}$ and $\R^{m+2}$. To simplify notation, we shall omit the identification symbol $(\,\,)_v$ from now on. For the sake of completeness, we summarize the well-known relationships between the future 
lightlike cone and the M\"{o}bius sphere.  A detailed exposition of these facts can be found in \cite{Baum} and \cite{CS09} (see also \cite{PacoLuz}). In this Section we also fix some terminology and notations. We follow closely the treatment given in \cite{CS09}.

Let us consider $\L^{m+2}$ with a fixed basis $\mathcal{B}=(\ell,e_{1},e_{2},\dots , e_{m}, n )$, where $\ell\in \mathcal{N}^{m+1}$, and such that the corresponding matrix to $\langle \,,\,\rangle$ with respect to $\mathcal{B}$ is given by 
\begin{equation}\label{070125A}
    S:=\left(\begin{array}{ccc} 0 & 0 & 1\\ 0 & \mathrm{I}_{m} & 0 \\ 1 & 0 & 0 \end{array} \right),
\end{equation}
where $\mathrm{I}_{m}$ is the identity matrix with $m$-rows. Let $O(m+1, 1)=\{\sigma\in \mathrm{Gl}(m+2, \R):\sigma^{t}S\sigma=S\}$ be the Lorentz group, that is, the pseudo-orthogonal group of signature $(m+1,1)$. The natural action of $O(m+1,1)$ on $\L^{m+2}$ leaves the lightlike vectors invariant but does not preserve the future lightlike cone.
The orthochronous Lorentz group $O^{+}(m+1, 1)$ is the subgroup of $O(m+1, 1)$ that preserves the future lightlike cone $\mathcal{N}^{m+1}$ and is isomorphic to the 
 M\"{o}bius group, given by
$PO(m+1,1):=O(m+1, 1)/\{\pm \mathrm{Id}\}$, by means of
$$
  O^{+}(m+1, 1) \to PO(m+1,1), \quad \sigma\mapsto [\sigma],  
$$
where $[\sigma]=\{\pm \sigma\}$ is the equivalence class of $\sigma$ in $PO(m+1,1)$. Throughout this paper, we extensively use this isomorphism  to identify the orthochronous Lorentz group with the M\"{o}bius  group. Also, the Lie group $O^{+}(m+1, 1)$ acts transitively on the future lightlike cone, yielding a diffeomorphism   $\mathcal{N}^{m+1}\cong O^{+}(m+1, 1)/H,
$
where $H$ is the isotropy subgroup of the element $\ell \in \mathcal{N}^{m+1}$. The space $\mathcal{N}^{m+1}\cong O^{+}(m+1, 1)/H$ is called the $(m+1)$-dimensional lightlike homogeneous model manifold, \cite[Definition 2.1]{PacoLuz}. 

Let $\mathcal{N}^{m+1}/\sim$ be  the projective future lightlike cone, that is, the space of lines spanned by elements of $\mathcal{N}^{m+1}$ considered as a subset of the real projective space $\R P^{m+1}$.
The natural projection
$$\pi:\mathcal{N}^{m+1}\to \mathcal{N}^{m+1}/\sim$$ 
is a principal fiber bundle with structure group $\R_{>0}$. Let us consider the $m$-dimensional unit sphere $\S^{m}\subset \E^{m+1}\subset \L^{m+2}$, where $\E^{m+1}$ denotes the $(m+1)$-dimensional Euclidean space embedded in $\L^{m+2}$ at $v_{m+2}=0$. 
We have the diffeomorphism 
$$
\S^{m}\to \mathcal{N}^{m+1}/\sim, \quad x\mapsto \pi\left(
\begin{array}{c}
x \\ 1
\end{array}
\right).
$$
In this framework, the map $\pi$ reads as follows
\begin{equation*}
\mathcal{N}^{m+1} \to \S^{m}, \quad \left(v_{1},\dots, v_{m+2}\right)^t\mapsto \frac{1}{v_{m+2}}\left(v_{1},\dots, v_{m+1}\right)^t.
\end{equation*}
Under the usual identification, for every $v\in \mathcal{N}^{m+1}$, we have $T_{v}\mathcal{N}^{m+1}=v^{\perp}$, where $v^{\perp}$ denotes the orthogonal space to $v$ with respect to  $\langle\,,\,\rangle$.
The differential map of the projection $\pi$ induces an isomorphism
$
T_{v}\pi:v^{\perp}/(\R \cdot v)\to T_{\pi(v)}\S^{m}
$
which satisfies 
\begin{equation}\label{difP}
T_{v}\pi \cdot w=T_{s v}\pi \cdot sw,
\end{equation}
for every $s \in \R_{>0}$ and $w\in T_{v}\mathcal{N}^{m+1}$. Moreover, the action of the group $O^{+}(m+1, 1)$ descends to $\S^{m}\cong \pi(\mathcal{N}^{m+1})$. This action is effective and transitive,  yielding a diffeomorphism $\S^{m}\cong O^{+}(m+1,1)/P$, where $P$ is the isotropy subgroup of $\pi(\ell)$.  The group $P$ is known as the Poincar\'{e} conformal group. These descriptions as homogeneous manifolds can be summarized in the following diagram:
$$
    \begin{tikzpicture}
	
	\node (A) at (0,0) {$\mathcal{N}^{m+1}=O^{+}(m+1,1)/H$};
	\node (B) at (7,0) {$\S^{m}=O^{+}(m+1,1)/P$};
	\node (C) at (3.5,1.5) { $O^{+}(m+1,1)$};

	\draw[->] (A) -- node[midway, below] {$\pi$} (B);
	\draw[<-] (B) --  (C);
	\draw[->] (C) --  (A);
\end{tikzpicture}
$$

In a general setting, a Klein geometry is a pair $(G,H)$, where $G$ is a Lie group and $H\subset G$ is a closed subgroup. We think of the homogeneous space $G/H$ as carrying the geometric structure whose automorphisms are exactly the left actions $\ell_{g}$ by elements $g\in G$. In this terminology, we now describe the Klein geometries that correspond to the  homogeneous manifolds $\mathcal{N}^{m+1}$ and $\S^{m}$ as introduced above. The case of $\S^{m}$ is well known as the celebrated model for flat Riemannian conformal  geometry. We include this description here for completeness and due to its close relationship with the Klein geometry $\mathcal{N}^{m+1}$.

Recall that two Riemannian metrics $g$ and $g'$ on a manifold $M$ are said to be conformally related if $g'=e^{2\phi}g$ for some smooth function $\phi$ on $M$. The set of all Riemannian metrics of the form $e^{2\phi}g$ for $\phi \in C^{\infty}(M)$ is called the conformal class $[g]$ of the metric $g$. A Riemannian conformal  structure  is a pair $\left(M, [g]\right)$, where $M$ is a manifold and $[g]$ is a conformal class of Riemannian metrics on $M$.

Let $g_{_{\S^{m}}}$ be the usual round metric of constant sectional curvature $1$ on $\S^m$ and $[g_{_{\S^{m}}}]$ its conformal class. 
The conformal manifold $(\S^{m},[g_{_{\S^{m}}}])$ is called the M\"{o}bius sphere. It is well known that for $m\geq 2$, the Lie group of global conformal transformations of the M\"{o}bius  sphere $\left(\S^{m}, [g_{_{\S^{m}}}]\right)$ is the M\"{o}bius  group $PO(m+1, 1)\simeq O^{+}(m+1, 1)$. For $m\geq 3$, every conformal transformation between two connected open subsets of $\left(\S^{m},  [g_{_{\S^{m}}}]\right)$ is the restriction of a left action $\ell_{\sigma}$ by an element $\sigma\in O^{+}(m+1, 1)$. This local property does not hold for $m=2$. Finally, every diffeomorphism of the circle $\S^{1}$ gives a conformal transformation. Therefore, for $m\geq 2$,  the geometric structure that carries $\S^{m}=O^{+}(m+1,1)/P$ is exactly $\left(\S^{m},[g_{_{\S^{m}}}]\right)$. 

As a consequence of the aforementioned rigidity properties of the  M\"{o}bius  sphere, 
the following {\it Liouville type theorem} was established  in \cite[Theorem 1.1]{BFZ}.
 
 \begin{theorem}\label{isometrias}
For $m\geq 2$, the group of (global) isometries $\mathrm{Iso}(\mathcal{N}^{m+1}, h)$ is the orthochronous Lorentz group $O^{+}(m+1, 1)$.
\begin{enumerate}
\item [i)] For $m\geq 3$, every isometry between two connected open subsets of $\mathcal{N}^{m+1}$ is the restriction of the left action by an element of $O^{+}(m+1,1)$.
\item [ii)] The group of local isometries of $\mathcal{N}^{3}$ coincides with the group of local conformal transformations of $\S^{2}$.
\item [iii)] Every diffeomorphism of the circle $\S^{1}$ corresponds to an isometry of $\mathcal{N}^{2}\subset \L^{3}$.
\end{enumerate}

\end{theorem}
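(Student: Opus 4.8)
The plan is to reduce Theorem~\ref{isometrias} to the conformal rigidity of the M\"obius sphere already recalled above, by exploiting the $\R_{>0}$-principal bundle $\pi\colon\mathcal N^{m+1}\to\S^m$ and the tautological nature of the degenerate metric $h$. Concretely, I would prove that the assignment $f\mapsto\bar f$, where $\bar f$ is the diffeomorphism of $\S^m$ induced by $f$ on the space of fibers of $\pi$, is a bijection from (the pseudogroup of local) $h$-isometries of $\mathcal N^{m+1}$ onto (the pseudogroup of local) conformal transformations of $(\S^m,[g_{\S^m}])$. Granting this, every item follows at once: for $m\ge 2$ the global conformal group of $\S^m$ is $O^{+}(m+1,1)$, which gives $\mathrm{Iso}(\mathcal N^{m+1},h)=O^{+}(m+1,1)$; for $m\ge 3$ local conformal maps are restrictions of global ones, which gives i); for $m=2$ the local conformal pseudogroup of $\S^2$ is strictly larger, which gives ii); and for $m=1$ it is the full pseudogroup of local diffeomorphisms of $\S^1$, which gives iii). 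In the first case one also recovers $f=\sigma$ by writing $\bar f=\bar\sigma$ with $\sigma\in O^{+}(m+1,1)$ and observing that $\sigma^{-1}\circ f$ descends to the identity.

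First I would set up the descent. Since $\mathcal Z$ spans $\mathrm{Rad}(h)$ and its integral curves are exactly the rays $\R_{>0}\cdot v$, i.e.\ the fibers of $\pi$, any $h$-isometry $f$ preserves $\mathrm{Rad}(h)$, hence permutes these rays, so $\pi\circ f$ factors as $\bar f\circ\pi$ for a diffeomorphism $\bar f$ of $\S^m$. Choosing a section $s$ of $\pi$, the map $s':=f\circ s\circ\bar f^{-1}$ is again a section, so ${s'}^{*}h\in[g_{\S^m}]$ (every section of $\pi$ pulls $h$ back to a metric in the round conformal class), and the identity $\bar f^{*}({s'}^{*}h)=(f\circ s)^{*}h=s^{*}(f^{*}h)=s^{*}h$ shows that $\bar f$ is a conformal transformation; the same computation in a flow-box of the radical foliation handles local isometries. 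Injectivity is the one place needing an explicit computation: if $\bar g=\mathrm{id}$ then $g$ preserves each ray, so $g(v)=t(v)\,v$ for a smooth positive $t$, and differentiating gives $g_{*}w=(dt)_v(w)\,v+t(v)\,w$; using $\langle v,v\rangle=0$ and $\langle v,w\rangle=0$ for $w\in v^{\perp}=T_v\mathcal N^{m+1}$ one then finds $(g^{*}h)_v=t(v)^{2}\,h_v$. Since $h_v\neq 0$ everywhere, $g^{*}h=h$ forces $t\equiv 1$, i.e.\ $g=\mathrm{id}$.

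For surjectivity I would build the lift of a conformal map from the tautological description of $h$: for $v$ over $x$ the form $h_v$ descends through $T_v\pi$ to a metric value $\mu(v)\in\R_{>0}\cdot(g_{\S^m})_x$, and $h_v(w,w')=\mu(v)\bigl(d\pi_v w,\,d\pi_v w'\bigr)$, so $\mathcal N^{m+1}$ is precisely the ray bundle of metrics in $[g_{\S^m}]$. Given a (local) conformal transformation $\bar f$ of $\S^m$, define $f$ by $\pi\circ f=\bar f\circ\pi$ together with $\mu(f(v))=(\bar f^{-1})^{*}\mu(v)$; this is well defined exactly because $\bar f$ is conformal, and a direct check using $d\pi_{f(v)}\circ f_{*}=\bar f_{*}\circ d\pi_v$ yields $f^{*}h=h$. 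Since the descent of this $f$ is $\bar f$, the map $f\mapsto\bar f$ is onto, and by the injectivity above it is the desired bijection, both globally and at the pseudogroup level.

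I expect the main obstacle to be the input from conformal geometry rather than the bundle bookkeeping: the genuinely hard facts are that the global conformal group of $(\S^m,[g_{\S^m}])$ is $O^{+}(m+1,1)$ for $m\ge 2$ and that, for $m\ge 3$, every conformal map between connected open subsets of $\S^m$ extends to a global one — the classical Liouville phenomenon — which I would quote rather than prove. On the side of the correspondence itself, the only delicate point is making the descent rigorous for open sets that are not unions of whole rays: one passes to flow-boxes of the radical foliation so that the local leaf space is an open subset of $\S^m$, after which the arguments above apply verbatim.
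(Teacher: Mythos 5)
The paper does not actually prove this statement: it is imported verbatim as \cite[Theorem 1.1]{BFZ}, so there is no internal proof to compare yours against. Judged on its own merits, your argument is correct and is the natural one given the paper's framing of $\mathcal{N}^{m+1}$ as the bundle of scales over the M\"obius sphere. The descent step is sound (an $h$-isometry preserves $\mathrm{Rad}(h)$, hence permutes the rays, and the section computation $\bar f^{*}(s'^{*}h)=s^{*}h$ shows $\bar f$ is conformal); the injectivity computation $g_{*}w=(dt)_v(w)\,v+t(v)w$, giving $(g^{*}h)_v=t(v)^2h_v$ and hence $t\equiv 1$, is exactly right because $\langle v,v\rangle=\langle v,w\rangle=0$ on $T_v\mathcal{N}^{m+1}=v^{\perp}$; and the lift of a conformal map via $\mu(f(v))=(\bar f^{-1})^{*}\mu(v)$ is well defined precisely by conformality. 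Two small points you should make explicit to close the argument: first, that each $\sigma\in O^{+}(m+1,1)$ does act isometrically on $(\mathcal{N}^{m+1},h)$ (immediate, since it preserves $\langle\,,\,\rangle$ and the cone), so that $\sigma^{-1}\circ f=\mathrm{id}$ really yields $f=\ell_\sigma$; second, for items i)--iii) the injectivity computation must be run on a flow-box rather than globally, which you do flag. The external inputs you quote (the global conformal group of $\S^{m}$ being $PO(m+1,1)\simeq O^{+}(m+1,1)$ for $m\ge 2$, Liouville rigidity for $m\ge 3$, its failure for $m=2$, and the triviality of conformality on $\S^{1}$) are exactly the facts the paper itself records in Section \ref{01042025A}, so the reduction is legitimate.
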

\noindent A slight variation on the proof of \cite[Theorem 1.1]{BFZ} permits to show that every element of $O^{+}(m+1, 1 )$ also preserves the vector field $\mathcal{Z}\in \mathfrak{X}(\mathcal{N}^{m+1})$, which spans the radical of $h$. 
That is, for every $\sigma \in O^{+}(m+1,1)$, we have $T_{v}\ell_{\sigma}\cdot \mathcal{Z}_{v}= \mathcal{Z}_{\sigma \cdot v}$, where $T_{v}\ell_{\sigma}$ denotes the differential map of the left action by $\sigma$.
Therefore, for $m\geq 2$ , the left actions by elements of the Lie group $O^{+}(m+1,1)$ coincide with the diffeomorphisms of $\mathcal{N}^{m+1}$ that preserve both $h$ and $\mathcal{Z}$. Moreover, for $m\geq 3$, every diffeomorphism between two connected open subsets of $\mathcal{N}^{m+1}$ that preserves $h$ and $\mathcal{Z}$ is the restriction of the left action by an element of $O^{+}(m+1,1)$, \cite[Corollary 2.4]{PacoLuz}.

Let us denote by $\mathrm{Sym}^{+}(TM)\to M$ the fiber bundle of positive definite symmetric bilinear forms on a manifold $M$. Thus, a Riemannian metric on $M$ is a section of this fiber bundle. Note that for any section $s:\S^{m}\to \mathcal{N}^{m+1}$ of $\pi$, given by $s(x)=e^{\phi(x)}(x^t,1)^t$ for some $\phi\in C^{\infty}(\S^{m})$,  we have $s^{*}h=e^{2\phi}g_{_{\S^{m}}} \in [g_{_{\S^{m}}}]$. Conversely,
for every metric $g\in [g_{_{\S^{m}}}]$, there exists a section $s$ of $\pi$ with $g=s^{*}h$. Following \cite{BZ17}, the future lightlike cone $\mathcal{N}^{m+1}$ can be seen as a submanifold $$
 c\colon \mathcal{N}^{m+1} \to \mathrm{Sym}^{+}(T\S^{m}), 
$$ in the following way.  The bilinear form $h$ induces  a (positive definite) inner product on the quotient space $v^{\perp}/(\R\cdot v)$. Assuming that $\pi(v)=x$, the element $c(v)\in \mathrm{Sym}^{+}(T_{x}\S^{m})$ is defined by requiring that the map $T_{v}\pi: \left(v^{\perp}/(\R\cdot v),h\right)\to \left(T_{x}\S^{m},c(v)\right)$ is an isometry. If we replace $v$ by $sv$ for some $s\in \R_{>0}$ and use (\ref{difP}), the carried bilinear form on $T_{x}\S^{m}$ is scaled by a factor of $s^2$. Hence, we obtain an inner product on $T_{x}\S^{m}$, determined up to positive scalar multiples. In particular, for every $x\in \S^{m}$,
 $$c\left(\pi^{-1}(x)\right)=\left\{\left.s^{2}g_{_{\S^{m}}}\right|_x: s>0\right\},$$
which is the positive ray spanned by $g_{_{\S^{m}}}$ at $x\in \S^{m}$. Consequently, $\mathcal{N}^{m+1}$ is the fiber bundle of scales associated with the conformal structure of the  M\"{o}bius sphere $\left(\S^{m},[g_{_{\S^{m}}}]\right)$. This fact will be important in Subsection \ref{071224C}.

\subsection{Algebraic description of $\mathcal{N}^{m+1}$ and the Möbius sphere}\label{13082025yt}

In this Subsection, we recall the description of the homogeneous manifolds 
$\mathcal{N}^{m+1}=O^{+}(m+1,1)/H$ and $\S^{m}=O^{+}(m+1,1)/P$ at the level of Lie algebras.
First, recall that the elements of the Poincar\'{e} conformal group $P$ correspond to the elements of $O^{+}(m+1,1)$ of the form
$$
P=\left\{\left(\begin{array}{ccc} \lambda & -\lambda w^{t}g & -\frac{\lambda}{2}\| w\|^2 \\ 0 & g& w \\ 0 & 0 & \lambda^{-1} \end{array} \right): \lambda \in \R_{>0} ,\,\, w\in \R^{m},\,\, g\in O(m)\right\},
$$
where $\|w\|^2=g_{_{\R^{m}}}(w,w)$ denotes the square of the usual Euclidean norm on $\R^{m}$ \cite[Proposition 1.6.3]{CS09}. The Lie group $H\subset O^{+}(m+1,1)$ corresponds to the elements of $P$ with $\lambda= 1$. 
Therefore, $H$ is a closed normal subgroup of $P$, with $P/H\cong \R_{>0}$.

\begin{remark}\label{71224A}
    {\rm As pointed out in \cite{PacoLuz}, the group of rigid motions $ \mathrm{Euc}(\E^{m})=\R^{m}\rtimes O(m) $ of the $m$-dimensional Euclidean space $\E^m$ is  isomorphic to $H$  via the map
\begin{equation}\label{iso1}
\mathrm{Euc}(\E^{m}) \ni\left(\begin{array}{cc} 1 &  0  \\ w & g  \end{array} \right)
\mapsto \left(\begin{array}{ccc} 1 & -  w^{t}g & -\frac{1}{2}\| w\|^2\\ 0 & g & w \\ 0 & 0 & 1 \end{array} \right)\in H.
\end{equation}
 }
\end{remark}
At the level of Lie algebras, the Lie algebra $\mathfrak{g}$ of $O^{+}(m+1,1)$ is given by
$$
\mathfrak{g}= \left\{\left(\begin{array}{ccc} a & Z & 0\\ X & A & -Z^{t} \\ 0 & -X^{t} & -a \end{array} \right): a\in \R, X\in \R^{m}, Z\in (\R^{m})^{*}, A\in \mathfrak{o}(m)\right\}.
$$
The decomposition
$
\mathfrak{g}=\mathfrak{g}_{-1}\oplus \mathfrak{g}_{0} \oplus \mathfrak{g}_{1}
$ where
$$
\mathfrak{g}_{-1}=\left\{\left(\begin{array}{ccc} 0 & 0 & 0\\ X & 0 & 0 \\ 0 & -X^{t} & 0 \end{array} \right)\right\}, 
\mathfrak{g}_{0}=\left\{\left(\begin{array}{ccc} a & 0 & 0\\ 0 & A & 0 \\ 0 & 0 & -a \end{array} \right)\right\}, 
\mathfrak{g}_{1}=\left\{\left(\begin{array}{ccc} 0 & Z & 0\\ 0 & 0 & -Z^{t} \\ 0 & 0 & 0 \end{array} \right)\right\}$$
defines a grading of $\mathfrak{g}$, that is, $[\mathfrak{g}_{i},\mathfrak{g}_{j}]\subset \mathfrak{g}_{i+j}$, where $\mathfrak{g}_{k}=0$ when $k\notin \{-1,0,1\}$.
The Lie algebra of $P$ is given by $\mathfrak{p}=\mathfrak{g}_{0}\oplus \mathfrak{g}_{1}$, and the Lie algebra $\mathfrak{h}$ of $H$ is the Lie subalgebra of $\mathfrak{p}$ determined by $a=0$, \cite[Section 1.6.3]{CS09}. We have $\mathfrak{g}_{0}=\mathfrak{z}(\mathfrak{g}_{0})\oplus [\mathfrak{g}_{0}, \mathfrak{g}_{0}]$, where $\mathfrak{z}(\mathfrak{g}_{0})$ is the center of the Lie algebra $\mathfrak{g}_{0}$, and $\mathfrak{h}=[\mathfrak{g}_{0}, \mathfrak{g}_{0}]\oplus \mathfrak{g}_{1}$. The quotient vector space $\mathfrak{g}/ \mathfrak{h}$ can be identified with $\R \times \R^{m}$ as follows: 
\begin{equation}\label{151}
\left(\begin{array}{ccc} a & Z & 0\\ X & A & -Z^{t} \\ 0 & -X^{t} & -a \end{array} \right)+ \mathfrak{h} \mapsto \left(
\begin{array}{c}
a \\ X
\end{array}
\right)\in \R \times \R^{m}.
\end{equation}
Recall that the quotient adjoint representation $\underline{\mathrm{Ad}}:H\longrightarrow \mathrm{Gl}(\mathfrak{g}/\mathfrak{h)}$  is defined as 
\begin{equation*}
 \underline{\mathrm{Ad}}(h):\mathfrak{g}/\mathfrak{h} \to \mathfrak{g}/\mathfrak{h},\quad Y+\mathfrak{h}\mapsto \mathrm{Ad}(h)(Y)+\mathfrak{h},\end{equation*}
where $\mathrm{Ad}$ denotes the adjoint representation of $O^{+}(m+1,1)$. 

We conclude this Section with several results on the Klein geometry $\left(O^{+}(m+1,1), H\right)$ for  $m\geq 2$, as given in \cite[Lemma 2.5]{PacoLuz}.
\begin{enumerate}
    \item The Lie algebra $\mathfrak{h}$ does not admit any reductive complement in $\mathfrak{g}$.
\item Taking into account (\ref{151}), the quotient adjoint representation is given by
\begin{equation}\label{071224B}
    \underline{\mathrm{Ad}}(h)\left(\begin{array}{c}
a \\ X
\end{array}
\right)=\left(\begin{array}{c}
a-  g_{_{\R^{m}}}( w, gX) \\  gX
\end{array}
\right),
\end{equation}
where $h$ corresponds to $\left(\begin{array}{cc} 1 &  0  \\ w & g  \end{array} \right)$ under the isomorphism given in $(\ref{iso1})$.
\item The map $\underline{\mathrm{Ad}}$ is injective. That is, the Klein geometry $\left(O^{+}(m+1,1), H\right)$ is a first-order Klein geometry.
\end{enumerate}

\section{Cartan geometries}\label{31102024B}

For the sake of completeness, we include an introduction to Cartan geometries in a fully general context in this Section. We will focus only on the part of the theory relevant to this article and will closely follow  \cite[Section 1.5]{CS09}.

\subsection{Definition and properties}\label{130820295y}
 
For a principal fiber bundle $p:\mathcal{P}\to M$ with structure group $H$, let us denote by $r^{h}$ the right translation on $\mathcal{P}$ by $h \in H$, and by $\zeta_{X}\in \mathfrak{X}(\mathcal{P})$ the fundamental vector field corresponding to $X\in \mathfrak{h}=\mathrm{Lie}(H)$, given by
$
\zeta_{X}(u):=\left.\frac{d}{dt}\right|_{0}\left(u \cdot \mathrm{exp}\left(tX\right)\right).
$

Let $G$ be a Lie group and $H\subset G$ a closed subgroup, and let $\mathfrak{g}$ be the Lie algebra of $G$. A Cartan geometry of type $(G,H)$ on a manifold $M$ consists of:
\begin{enumerate}
\item A principal fiber bundle $p:\mathcal{P}\to M$ with structure group $H$.

\item A $\mathfrak{g}$-valued $1$-form $\omega \in \Omega^{1}(\mathcal{P}, \mathfrak{g})$, called the Cartan connection, such that for every $u\in \mathcal{P}, h\in H$, and $X\in \mathfrak{h}$, the following conditions hold:
\begin{enumerate}
\item $
\omega(u):T_{u}\mathcal{P}\to \mathfrak{g}
$
is a linear isomorphism.
\item  $(r^{h})^{*}(\omega)=\mathrm{Ad}(h^{-1})\circ \omega$.
\item $\omega(u)(\zeta_{X}(u))=X.$
 \end{enumerate} 
\end{enumerate}
For every $X\in \mathfrak{g}$, the constant vector field $\omega^{-1}(X)\in \mathfrak{X}(\mathcal{P})$ is defined by the condition $\omega(u) \left(\omega^{-1}\left(X\right)(u)\right)=X$ for all $u\in \mathcal{P}$. The curvature form of a Cartan geometry is given by $K:=d\omega+\frac{1}{2}[\omega, \omega]\in \Omega^{2}(\mathcal{P}, \mathfrak{g})$. Equivalently, we have
$$
K(\xi ,\eta)=d\omega (\xi, \eta)+[\omega(\xi), \omega (\eta)], \quad \xi, \eta\in \mathfrak{X}(\mathcal{P}). 
$$
A Cartan connection $\omega$ is said to be torsion-free when $K$
takes values in $\mathfrak{h}$.
All the information of $K$ is contained in the curvature function $\kappa : \mathcal{P}\to \Lambda^{2}\mathfrak{g}^{*}\otimes \mathfrak{g}$, defined as $$\kappa(u)(X,Y)=K\left(\omega^{-1}(X)(u),\omega^{-1}(Y)(u)\right).$$ Since $K$ is horizontal, meaning that it vanishes when evaluated on a vertical tangent vector, the curvature function may be viewed as $\kappa : \mathcal{P}\to \Lambda^{2}(\mathfrak{g}/\mathfrak{h})^{*}\otimes \mathfrak{g}$, see \cite[Lemma 1.5.1]{CS09}.

The canonical projection $p:G\to G/H$, endowed with the (left) Maurer-Cartan form $\omega_{G}\in \Omega^{1}(G, \mathfrak{g})$,  is called the homogeneous model for Cartan geometries of type $(G,H)$. The Maurer-Cartan equation implies that the homogeneous model of any Cartan geometry has zero curvature \cite[Section 1.2.4]{CS09}.

A Cartan connection provides a description of the tangent bundle of the base manifold $M$ as the associated fiber bundle $\mathcal{P}\times_{{H}} (\mathfrak{g}/\mathfrak{h})$ for the quotient adjoint representation $\underline{\mathrm{Ad}}:H\longrightarrow \mathrm{Gl}(\mathfrak{g}/\mathfrak{h)}$  given by
\begin{equation*}
\underline{\mathrm{Ad}}(h):\mathfrak{g}/\mathfrak{h} \to \mathfrak{g}/\mathfrak{h},\quad Y+\mathfrak{h}\mapsto \mathrm{Ad}(h)(Y)+\mathfrak{h},\end{equation*}
where $\mathrm{Ad}$ denotes the adjoint representation of $G$. Explicitly, for each $u\in \mathcal{P}$ with $p(u)=x\in M$, there is a canonical linear isomorphism 
$\phi_{u}:T_{x}M \to \mathfrak{g}/\mathfrak{h}$ such that the following diagram commutes:
\begin{equation}\label{isomor}
\begin{CD}
T_{u}\mathcal{P}  @>\omega(u)>> \mathfrak{g}\\
@V T_{u} p  VV @VV \mathrm{proj} V \\
T_{x}M @>>\phi_{u}  > \mathfrak{g}/\mathfrak{h}
\end{CD}\quad \quad \text{with } \phi_{uh}=\underline{\mathrm{Ad}}(h^{-1})\circ\phi_{u} \text{ for all }h\in H.
\end{equation}
Then, the canonical isomorphism of vector bundles over $M$ is given by
\begin{equation}\label{142}
TM \cong \mathcal{P}\times_{{H}} (\mathfrak{g}/\mathfrak{h}),\quad (x,v)\in TM\mapsto [u, \phi_{u}(v)],
\end{equation}
where $p(u)=x$, see \cite[Theorem 3.15]{Sharpe}.

An isomorphism of Cartan geometries $(p:\mathcal{P}\to M, \omega)$ and $(p':\mathcal{P}'\to M', \omega')$ with the same model $(G,H)$ is a principal fiber bundle isomorphism $(F,f)$ 
$$
 \begin{CD}
\mathcal{P}  @>F >> \mathcal{P}'\\
@V p VV @VV p' V \\
M @>>  f > M'
\end{CD}
$$
such that $F^{*}(\omega ')=\omega$.
That is, $F$ is a diffeomorphism from $\mathcal{P}$ to $\mathcal{P}'$ (and so is $f$) such that $F\circ r^{h}=r^{h}\circ F$ for all $h\in H$, and $F^{*}(\omega ')=\omega$. 
In the particular case that both Cartan geometries are the same, an isomorphism is called an automorphism.
The group $\mathrm{Aut}(\mathcal{P}, \omega)$ of all automorphisms of the Cartan geometry $(p:\mathcal{P}\to M, \omega)$ over a connected manifold $M$ is a Lie group (possibly with uncountably many connected
components) and has dimension at most $\mathrm{dim}(G)$, \cite[Theorem 1.5.11]{CS09}.

A Cartan geometry $p:\mathcal{P}\to M$ of type $(G,H)$ has curvature form $K=0$ if and only if every point $x\in M$ has an open neighborhood $U\subset M$ such that $\left(p:p^{-1}(U)\to U, \omega\vert_{U}\right)$ is isomorphic to the restriction of the homogeneous model $\left(G\to G/H, \omega_{G}\right)$ to an open neighborhood of $o:=eH$,  \cite[Proposition 1.5.2]{CS09}.

\subsection{Correspondence spaces}\label{12062025}

This Subsection is based on \cite[Section 1.5.13]{CS09}. Let $(\pi:\mathcal{P}\to M, \omega)$ be a Cartan geometry of type $(G,P)$, and let $H\subset P$ be a closed subgroup. The correspondence space for $H\subset P$ is given by $\mathcal{C}(M):=\mathcal{P}/H$. 
The natural projection $f:\mathcal{C}(M)\to M$ defines a fiber bundle over $M$ with fiber given by the homogeneous space $P/H$,
and $(p\colon \mathcal{P}\to \mathcal{C}(M), \omega)$ is a Cartan geometry of type $(G,H)$ on $\mathcal{C}(M)$, see \cite[Proposition 1.5.13]{CS09}. The following commutative diagram holds:

$$
\begin{tikzpicture}
	
	\node (A) at (0,0) {$\mathcal{C}(M)$};
	\node (B) at (3,0) {$M$};
	\node (C) at (1.5,1.5) { $\mathcal{P}$};

	\draw[->] (A) -- node[midway, below] {$f$} (B);
	\draw[<-] (B) -- node[midway, right] {$\pi$} (C);
	\draw[->] (C) -- node[midway, left] {$p$} (A);
\end{tikzpicture}
$$
From (\ref{142}), we obtain a description of the tangent bundle of $\mathcal{C}(M)$ as  $\mathcal{P}\times _{{H}}(\mathfrak{g}/ \mathfrak{h})$. In these terms, the vertical distribution of $f$ corresponds to $\mathcal{P}\times _{{H}}(\mathfrak{p}/ \mathfrak{h})\subset \mathcal{P}\times _{{H}}(\mathfrak{g}/ \mathfrak{h})=T\mathcal{C}(M)$, \cite[Proposition 1.5.13]{CS09}.
The corresponding curvature function
$
k^{\mathcal{C}(M)}:\mathcal{P}\to \Lambda^{2}(\mathfrak{g}/\mathfrak{h})^{*}\otimes \mathfrak{g}
$
satisfies 
\begin{equation}\label{081224A}
  k^{\mathcal{C}(M)}(u)(X+\mathfrak{h}, \cdot)=0,   
\end{equation}
for every $X\in \mathfrak{p}$ and $u\in \mathcal{P}$.

On the other hand, the converse can also be characterized. Let $G$ be a Lie group and $H\subset P\subset G$ closed subgroups. Consider a Cartan geometry $(p:\mathcal{P}\to N, \omega)$ of type $(G,H)$ such that the distribution $\mathcal{V}(N):= \mathcal{P}\times _{H}(\mathfrak{p}/ \mathfrak{h}) \subset TN$ is integrable. 
A (local) twistor space for $N$ is a smooth manifold $M$ together with an open subset $U\subset N$ and a surjective submersion $f:U\to M$ such that $\mathcal{V}_{x}N=\mathrm{Ker}(T_{x}f)$, i.e., a (local) leaf space for the foliation defined by $\mathcal{V}(N),$ \cite[Definition 1.5.14]{CS09}. 

The curvature property (\ref{081224A}) locally characterizes the correspondence spaces as follows, \cite[Theorem 1.5.14]{CS09}.
Let $(p:\mathcal{P}\to N, \omega)$ be a Cartan geometry of type $(G,H)$ such that the distribution $\mathcal{V}(N)= \mathcal{P}\times_{H}(\mathfrak{p}/ \mathfrak{h}) \subset TN$ is integrable, and assume that the curvature function 
$
k^{N}:\mathcal{P}\to \Lambda^{2}(\mathfrak{g}/\mathfrak{h})^{*}\otimes \mathfrak{g}
$
satisfies $k^{N}(u)(X+\mathfrak{h}, \cdot)=0$ for every $X\in \mathfrak{p}$ and $u\in\mathcal{P}$. Then, for any sufficiently small local twistor space $f:U\to M$, there exists a Cartan geometry of type $(G,P)$ on $M$ such that the restricted Cartan geometry $\left(p:p ^{-1}(U) \to U, \left.\omega\right|_{U}\right)$ of type $(G,H)$ is isomorphic to an open subset of the correspondence space $\mathcal{C}(M)$. In the case where $P/H$ is connected, this Cartan geometry on $M$ is unique.

\subsection{Tractor bundles}\label{18062025A}

Given  a Cartan geometry $(p:\mathcal{P}\to M,\omega)$ of type $(G,H)$, we can construct the extended associated fiber bundle
$\bar{\mathcal{P}}:=\mathcal{P}\times_{H}G$, where  the left action of $H$ on $G$ is given by multiplication.
Now, the natural projection $\pi \colon \bar{\mathcal{P}} \to M$ is a principal fiber bundle with structure group $G$, and
there is a map 
$$
j\colon \mathcal{P}\to \bar{\mathcal{P}}, \quad u\mapsto [u,e],
$$
where, as usual, $[u,e]$ denotes the equivalence class of $(u,e)$. Note that $\pi \circ j=p.$

As a particular case of \cite[Theorem 1.5.6]{CS09}, there exists a unique principal connection $\gamma \in \Omega^{1}(\bar{\mathcal{P}}, \mathfrak{g})$ such that $j^{*}\gamma= \omega.$ Therefore, for every representation $\rho \colon G\to \mathrm{GL}(\mathbb{V})$ of the group $G$ on a vector space $\mathbb{V}$, we obtain a linear connection $\nabla^{\gamma}$ on the associated vector bundle $\bar{\mathcal{P}}\times_{G}\mathbb{V}\to M$. Recall that $\nabla^{\gamma}$ is given as follows. Let $\sigma$ be a smooth section of $\bar{\mathcal{P}}\times_{G}\mathbb{V}$, and let $U\subset M$ be an open subset. Then, for every $x\in U$, we have
\begin{equation}\label{13102023A}
	\nabla^\gamma_{W_{x}}\sigma:=\left[s(x),\,W_{x}(f)+\rho'\left(\gamma(s(x))(T_{x}s\cdot W_{x})\right)(f(x))\right],
\end{equation}
where $W\in \mathfrak{X}(M)$ and $\sigma|_U = [s,\, f]$ for a local section $s:U\subset M\rightarrow \bar{\mathcal{P}}$ and a smooth function $f\in\mathcal{C}^{\infty}(U,\mathbb{V})$. We denote by $\rho'\colon \mathfrak{g}\to \mathfrak{gl}(\mathbb{V})$ the derivative of the representation $\rho.$ 

The restriction of $\rho$ to the subgroup $H$ also defines a representation, and  we have a natural vector bundle isomorphism
$$
\mathcal{P} \times_H \mathbb{V}\cong  \bar{\mathcal{P}}\times_G \mathbb{V},\quad [u,v]\mapsto [j(u), v].
$$
In this setting,  $\mathcal{P} \times_H \mathbb{V}$ is called the tractor bundle for the representation $\rho$ and, via the above isomorphism, it carries a canonical linear connection known as the tractor connection. Moreover, any $G$-invariant scalar product $\langle \,,\,\rangle_{\mathbb{V}}$ on $\mathbb{V}$ induces a well-defined bundle-like metric $\mathbf{h}$ on the vector bundle $\mathcal{P} \times_H \mathbb{V}$ given by
$$
	\mathbf{h}\left([u,\,v_1],[u,\,v_2]\right):=\langle v_1,v_2\rangle_{\mathbb{V}},
$$
for all $[u,\,v_1],[u,\,v_2]\in \mathcal{P} \times_H \mathbb{V}$. By definition, the tractor connection is a metric linear connection with respect to any metric $\mathbf{h}$ of this form.

\section{Cartan geometries with model the future lightlike cone}\label{13333082025A}
Now, we return to the particular case where $H$ and $P$ are the Lie groups considered in Section \ref{31102024A}.
\subsection{Lightlike Cartan geometries and the approach to induced structures}\label{1308202948}
We recall the following definition (see \cite[Definition 4.1]{PacoLuz}).
\begin{definition}\label{52}
A lightlike manifold is a pair $(N,h)$, where $N$ is a smooth manifold of dimension $m+1$ and $h$ is a lightlike metric on $N$. That is, $h$ is a symmetric $(0,2)$-tensor field on $N$ satisfying the following conditions:
\begin{enumerate}
\item $h(v,v)\geq 0$ for all $v\in T_{x}N$. 
\item The radical distribution $\mathrm{Rad}(h_{x})=\{v\in T_{x}N: h(v,\cdot)=0\}$ is a one-dimensional distribution on $N$.
\end{enumerate}
If there exists a vector field $Z\in \mathfrak{X}(N)$ that globally spans the radical distribution $\mathrm{Rad}(h)$, we say that $\mathrm{Rad}(h)$ is orientable,  and we write $(N,h,Z)$ to specify such a vector field $Z$.
\end{definition}

Every lightlike metric $h$ induces a bundle-like Riemannian metric $\overline{h}$ on the vector bundle $\mathcal{E}:=TN/\mathrm{Rad}(h)$, defined by $\overline{h}\big([V],[W]\big):=h(V,W)$, for all $V,W\in\mathfrak{X}(N)$, where the brackets denote equivalence classes in the quotient space $\mathcal{E}$. Given a triple $(N,h,Z)$ as above, the Lie derivative $\mathcal{L}_{Z}h$ induces a symmetric bilinear map $\mathcal{L}_{Z} \overline{h}$ on $\mathcal{E}$ via $\big(\mathcal{L}_{Z} \overline{h}\big)\big([V],[W]\big):=\big(\mathcal{L}_{Z}h\big)(V,W).$ Thus, the bundle-like metric $\overline{h}$ determines a self-adjoint vector bundle endomorphism $A_{Z}:\mathcal{E}\rightarrow \mathcal{E}$, defined by the condition 
\begin{equation}\label{16072025D}
    \big(\mathcal{L}_{Z} \overline{h}\big)([V],[W])=2\overline{h}\big(A_{Z}[V],[W]\big),
\end{equation}
for all $V,W\in\mathfrak{X}(N)$. Following \cite{BZ17}, the lightlike manifold $(N,h,Z)$ is said to be generic if $A_{Z}$ is an isomorphism on $\mathcal{E}$. This condition is independent of the choice of the vector field $Z$ spanning $\mathrm{Rad}(h)$. For simplicity, we will also denote the metric $\overline{h}$ by $h$, unless this may lead to confusion.

For a given lightlike manifold $(N,h,Z)$, the set $\mathcal{Q}\subset \mathcal{P}^{1}N$ of all admissible linear frames  is defined as
\begin{equation}\label{laQ}
\mathcal{Q}=\big\{b=(Z_x,e_{1}, \dots , e_{m})\in \mathcal{P}^{1}_{x}(N): x\in N, \,\, h(e_{i}, e_{j})=\delta_{ij}\big\}.
\end{equation}
Thus, $\mathcal{Q}$ defines a reduction of the structure group to 
$H$ via the Lie group monomorphism
$$
H\to \mathrm{GL}(m+1,\R), \quad \left(\begin{array}{ccc} 1 & -  w^{t}g & -\frac{1}{2}\| w\|^2\\ 0 & g & w \\ 0 & 0 & 1 \end{array} \right)\mapsto \left(\begin{array}{cc} 1 & -  w^{t}g  \\ 0 & g  \end{array} \right).
$$
Consequently, we have
$$
b\cdot h=\left(Z_x, \sum_{i=1}^{m}g_{i1}(-w_{i}Z_x+e_{i}),\dots , \sum_{i=1}^{m}g_{im}(-w_{i}Z_x+e_{i})\right).
$$

The quotient vector space $\mathfrak{g}/ \mathfrak{h}\simeq\R \times \R^{m}$, as given in (\ref{151}), is endowed with the lightlike metric
\begin{equation}\label{20062025A}
q\left(\left(
\begin{array}{c}
a \\ X
\end{array}
\right), \left(
\begin{array}{c}
b \\ Y
\end{array}
\right)\right)=g_{_{\R^{m}}}( X , Y),
\end{equation}
where the vector  $(1,0)^{t}\in \mathfrak{g}/ \mathfrak{h}$ spans the radical distribution $\mathrm{Rad}(q)$. From (\ref{071224B}), it follows that both $q$ and $(1,0)^{t}$ are $H$-invariant under the representation $\underline{\mathrm{Ad}}(H)$. Now, suppose that  $(p:\mathcal{P}\to N, \omega)$ is a Cartan geometry of type $\left(O^{+}(m+1,1),H\right)$. Taking into account the isomorphism of vector bundles $TN\cong \mathcal{P}\times_{H}(\mathfrak{g}/\mathfrak{h})$ given in $(\ref{142})$, one can show that $\omega$
induces a lightlike metric $h^{\omega}$ on the base manifold $N$, as well as a vector field $Z^{\omega}\in \mathfrak{X}(N)$ that globally spans the distribution $\mathrm{Rad}(h^{\omega})$, see \cite[Theorem 4.4]{PacoLuz}. 

\begin{remark}\label{281224A}
    {\rm \begin{enumerate}
        \item Let $(p:\mathcal{P}\to N, \omega)$ be a Cartan geometry of type $\left(O^{+}(m+1,1),H\right)$, and let $Q^{\omega}$ be the set of admissible linear frames given in (\ref{laQ}) for the lightlike manifold $(N,h^{\omega}, Z^{\omega})$. 
Since $(O^{+}(m+1,1),H)$ is a first-order Klein geometry, the total space $\mathcal{P}$ can be identified with the set of all admissible linear frames $Q^{\omega}$ via the following principal fiber bundle isomorphism over $N$, see \cite[Exercise 3.21]{Sharpe}, 
$$
\mathcal{S}:\mathcal{P}\to Q^{\omega},\quad u\mapsto \left(Z^{\omega}_{p(u)}, \phi_{u}^{-1} \left(
\begin{array}{c}
0 \\ \bar{e}_{1}
\end{array}
\right), \dots, \phi_{u}^{-1} \left(
\begin{array}{c}
0 \\ \bar{e}_{m}
\end{array}
\right)\right),
$$
where  $\left((1,0)^{t}, (0,\bar{e}_{1}^t)^{t}, \dots ,(0,\bar{e}_{m}^t)^{t}\right)$ is the fixed basis for 
$\mathfrak{g}/\mathfrak{h}\simeq \R \times \R^{m}$, with $(\bar{e}_{1}, \dots, \bar{e}_{m})$ being the canonical basis of $\R^m$. 
From now on, we identify $\mathcal{P}$ with $\mathcal{Q}^{\omega}$.

\item Recall the soldering form $\theta\in \Omega^{1}(\mathcal{P}^{1}N, \R^{m+1})$, defined by $\theta (b)(\xi)=b^{-1}(T_{b}\pi \cdot \xi)$, where $\pi:\mathcal{P}^{1}N\rightarrow N$, $b\in\mathcal{P}^{1}N$, and $\xi \in T_{b}\mathcal{P}^{1}N $. 
The soldering form $\theta$ is naturally induced on $Q^{\omega}$, and from $(\ref{151})$ and diagram $(\ref{isomor})$ we obtain
$$
\left.\theta\right|_{Q^{\omega}}(b)(\xi)=b^{-1}(T_{b}p \cdot \xi)=\mathrm{proj}\circ\omega(b)(\xi),
$$
where $\xi \in T_{b}Q^{\omega}$, and $\mathrm{proj}: \mathfrak{g}\to \mathfrak{g}/\mathfrak{h}$ denotes the natural projection. Therefore, the Cartan connection $\omega$ can be written as 
$$
\omega=\left.\theta\right|_{Q^{\omega}}\oplus \omega_{[\mathfrak{g}_{0}, \mathfrak{g}_{0}]}\oplus \omega_{1},
$$
where the subscripts denote the projections of $\mathfrak{g}$ onto $[\mathfrak{g}_{0}, \mathfrak{g}_{0}]$ and $\mathfrak{g}_{1}$, respectively, within the Cartan connection $\omega$. In particular, we obtain
$$
    \omega_{-1}\oplus\omega_{\mathfrak{z}(\mathfrak{g}_{0})}=\left.\theta\right|_{Q^{\omega}}.
$$

\item 
The vector field $Z^{\omega}\in \mathfrak{X}(N)$ satisfies 
$Z^{\omega}_{p(u)}=T_{u}p\cdot \omega^{-1}(E)(u)$ for every $u\in \mathcal{P},$
where $$E={\left(\begin{array}{ccc} 1 &  0 & 0\\ 0 & 0 & 0 \\ 0 & 0 & -1 \end{array} \right)}\in \mathfrak{z}(\mathfrak{g}_{0})$$ is the grading element of the decomposition $
\mathfrak{g}=\mathfrak{g}_{-1}\oplus \mathfrak{g}_{0} \oplus \mathfrak{g}_{1}
$. That is, the splitting $\mathfrak{g}=\mathfrak{g}_{-1}\oplus \mathfrak{g}_{0} \oplus \mathfrak{g}_{1}$ corresponds to the decomposition into eigenspaces of the adjoint action $\mathrm{ad}(E)=[E, *]$, with eigenvalues $-1, 0,1$, respectively. 
\end{enumerate}
}
\end{remark}

\begin{definition}\label{13082937465}
    A Cartan geometry $(p:\mathcal{P}\to N, \omega)$ modeled on $(O^{+}(m+1,1),H)$ is called a lightlike Cartan geometry on $N$, and its Cartan connection $\omega$ is referred to as a lightlike Cartan connection.
\end{definition}

\subsection{The correspondence  space for $H\subset P\subset O^{+}(m+1,1)$}\label{071224C}

In this Subsection, we aim to relate the classical approach to Riemannian conformal geometry via Cartan connections with lightlike Cartan geometries, using the perspective of correspondence spaces. This viewpoint provides a useful framework for comparing our construction with the classical theory. Once again, we follow \cite[Section 1.6]{CS09}.

Let $(\pi:\mathcal{G}\to M, \omega)$ be a Cartan geometry of type $(O^{+}(m+1,1), P)$. As is well known, considering that the quotient adjoint representation for every $\sigma=\tiny{\left(\begin{array}{ccc} \lambda & -\lambda w^{t}g & -\frac{\lambda}{2}\| w\|^2 \\ 0 & g& w \\ 0 & 0 & \lambda^{-1} \end{array} \right)}\in P$ is given by
$$
\underline{\mathrm{Ad}}(\sigma):\mathfrak{g}/\mathfrak{p} \to \mathfrak{g}/\mathfrak{p},\quad X+\mathfrak{p}\mapsto \lambda^{-1} gX+\mathfrak{p},
$$
 where $X\in \mathfrak{g}_{-1}$ and $TM\cong\mathcal{G}\times_{P}(\mathfrak{g}/ \mathfrak{p})$, this Cartan connection $\omega\in \Omega^{1}(\mathcal{P}, \mathfrak{g})$ induces a conformal structure $[g]$ on $M$. Moreover, there is a equivalence of categories between conformal structures and normal Cartan geometries of type $(O^{+}(m+1,1),P);$ see, for instance, \cite[Theorem 1.6.7]{CS09} for details.

Let $\mathcal{L}= \mathcal{G}/H$ be the correspondence space for $H\subset P$, with the natural projection $$f\colon \mathcal{L}\to M,$$ and consider 
the Cartan geometry $(p:\mathcal{G}\to \mathcal{L}, \omega)$ of type $(O^{+}(m+1,1), H)$. We know that $\omega$ determines a lightlike metric $h^{\omega}$ and a vector field $Z^{\omega}$, which spans the radical of $h^{\omega}$ on $\mathcal{L}$. As a consequence of Remark $\ref{281224A}$, the total space $\mathcal{G}$ coincides with the set of admissible linear frames $\mathcal{Q}^{\omega}$ of $(N,h^{\omega},Z^{\omega})$. 
Note that the bilinear form $h^{\omega}$ induces  a (positive definite) inner product on the quotient space $T_{y}\mathcal{L}/(\R\cdot Z^{\omega}_y)$. Similarly to the case of the future lightlike cone discussed in Subsection \ref{01042025A}, there is a natural map
$$
c\colon \mathcal{L}\to \mathrm{Sym}^{+}(TM), \quad y \mapsto c(y),
$$
 where $f(y)=x$, and $c(y)\in \mathrm{Sym}^{+}(T_{x}M)$ is defined by requiring that $$T_{y}f: \left(T_{y}\mathcal{L}/(\R\cdot Z^{\omega}_y), h^{\omega}\right)\to \left(T_{x}M, c(y)\right)$$ is an isometry. From $(\ref{isomor})$, we can consider the following commutative diagram:
\begin{center}
\begin{tikzcd} 
T_{u}\mathcal{G}\arrow[r,"\omega(u)"] \arrow[d, swap, "T_{u}p"]& \mathfrak{g}\arrow[d,"\textrm{proj}_{\mathfrak{g}_{-1}\oplus \mathfrak{z}(\mathfrak{g}_{0})}"] \\
T_{y}\mathcal{L} \arrow[r,  "\phi_{u}"{pos=0.5},shorten >= -5pt, shorten <= 0pt] \arrow[d,  swap, "T_{y}f"]& \,\,\,\mathfrak{g}_{-1}\oplus \mathfrak{z}(\mathfrak{g}_{0}) \arrow[d,  "\textrm{proj}_{\mathfrak{g}_{-1}}"]\\
T_{x}M \arrow[r, "\psi_{u}"] & \mathfrak{g}_{-1}
\end{tikzcd}
\end{center}
One easily sees that  the values of $c(y)$ with $f(y)=x$ cover the entire positive ray formed by the metrics in the conformal class $[g]$ at the point $x\in M$. That is, the correspondence space $\mathcal{L}= \mathcal{G}/H$ for $H\subset P$ coincides with the fiber bundle of scales associated with  the conformal structure induced on $M$ by the Cartan connection $\omega$, \cite[Section 1.6.5]{CS09}. 

The lightlike metric $h^{\omega}$ induced by $\omega$ on $\mathcal{L}$ is given by 
$$
h^{\omega}(\xi, \eta)=c(y)\left(T_{y}f\cdot \xi,T_{y}f\cdot \eta\right),
$$
for all $\xi, \eta \in T_{y}\mathcal{L}$. The lightlike metric $h^{\omega}$ is referred to as the tautological tensor in \cite{{F-G}}.
Regarding the vector field $Z^{\omega}$, recall that for any $u\in\mathcal{G}$ satisfying $p(u)=y\in \mathcal{L}$, we have
$$
Z^{\omega}_y=T_{u}p\cdot \omega^{-1}(E)(u).
$$
Since $E\in \mathfrak{p}$, it follows that $\omega^{-1}(E)=\zeta_{E}$, and consequently, the flow of $\omega^{-1}(E)$ is given by $\mathrm{Fl}^{\omega^{-1}(E)}_{t}(u)=u\cdot \mathrm{exp}(tE)$ for every $u\in \mathcal{G}$. Thus, the integral curve of $Z^{\omega}$ starting at $y$ is 
$$
\mathrm{Fl}^{Z^{\omega}}_{t}(y)=p\Big(u\cdot \mathrm{exp}(tE)\Big).
$$
Note that the curve $\mathrm{Fl}^{\omega^{-1}(E)}_{t}(u)$ is vertical with respect to the projection $\pi=f\circ p$, but not with respect to $p$.

Let $p_{0}: \mathcal{G}_{0}\to M$ be the $CO(m)$-principal fiber bundle describing the conformal structure on $M$ induced by $\omega\in \Omega^{1}(\mathcal{G}, \mathfrak{g})$. Let $\mathcal{Q}^{\omega}$ be the total space of admissible linear frames of the tautological metric $h^{\omega}$ on $\mathcal{L}$. There is a natural map
  $$
  \mathcal{F}\colon \mathcal{Q}^{\omega} \to \mathcal{G}_{0}, \quad (Z^{\omega}_y, e_{1},\dots, e_{m})\mapsto u_0:=(T_{y}f\cdot e_{1}, \dots , T_{y}f\cdot e_{m}).
  $$

Recall that the total space $\mathcal{G},$ where the normal Cartan connection $\omega$ is defined, is constructed from $\mathcal{G}_{0}$ as follows, \cite[Section 1.6.4]{CS09}.
  For every $u_0\in \mathcal{G}_{0}$, we define $\mathcal{G}_{u_0}$ as the set of linear isomorphisms $\gamma=\gamma_{-1}+\gamma_0\colon T_{u_0}\mathcal{G}_{0}\to \mathfrak{g}_{-1}\oplus \mathfrak{g}_{0}$ satisfying $\gamma_{-1}=\left.\theta\right|_{\mathcal{G}_0}(u_0),\,\,\gamma_{0}(\zeta_{(a, A)}(u_0))=(a, A)$ for every $(a,A)\in \mathfrak{g}_0\simeq\mathfrak{co}(m)$ 
  and
  $$
  \left.d\theta\right|_{\mathcal{G}_0}(u_0)(\xi, \eta)+[\gamma_{0}(\xi), \gamma_{-1}(\eta)]+[\gamma_{-1}(\xi), \gamma_{0}(\eta)]=0, \quad \xi, \eta \in T_{u_0}\mathcal{G}_{0}.
  $$
  Then, $\mathcal{G}$ is the disjoint union of the $\mathcal{G}_{u_0}$ for every $u_0\in\mathcal{G}_0$. The natural  projection
  $
  \Pi\colon \mathcal{G}\to \mathcal{G}_{0}
  $
satisfies $\pi=p_{0}\circ \Pi$ and defines a principal fiber bundle with structure group $P_{+}=\{\sigma \in  P: \lambda=1, g=\mathrm{Id}_{m} \}\simeq \R^{m}$. Finally,
the component $\omega_{-1}$ of the Cartan connection $\omega \in \Omega^{1}(\mathcal{G}, \mathfrak{g})$ is the pullback of the soldering form $\theta$ on $\mathcal{G}_{0}$, i.e., $\omega_{-1}=\Pi^{*}\left(\left.\theta\right|_{\mathcal{G}_0}\right)$, see \cite[Proposition 1.6.4]{CS09}.

Additionally, we have a natural map
$$
q\colon \mathcal{G}_{0}\to \mathcal{L}, \quad u_0 \mapsto q(u_0),
$$
where $q(u_0)$ denotes the inner product in $T_{p_{0}(u_0)}M$, with $u_0$ being an orthonormal basis  with respect to $q(u_0)$.

All manifolds involved here are quotient spaces of $\mathcal{G}$, as follows:
$$M=\mathcal{G}/P,\quad  \mathcal{G}_{0}= \mathcal{G}/P_{+} \quad \textrm{ and }\quad \mathcal{L}= \mathcal{G}/H.$$  
Taking into account that $P_{+}\subset H \subset P\subset O^{+}(m+1,1)$, the maps above are the natural projections, and, in particular, $q$
 is an $H/P_{+}\simeq O(m)$-principal fiber bundle. We can summarize the above maps in the following commutative diagram:

\[
\begin{tikzcd}
& \mathcal{Q}^\omega \arrow[dr, " \mathcal{F}"] & \\
\mathcal{G} \arrow[ur, "\mathcal{S}"] \arrow[rr, "\Pi"] \arrow[d, "p"'] & & \mathcal{G}_0 \arrow[d, "p_0"] \\
\mathcal{L} \arrow[rr, "f"'] & & M
\end{tikzcd}
\]

The natural question of when a lightlike Cartan geometry is locally the fiber bundle of scales associated with a Riemannian conformal structure can be addressed by applying the characterization of correspondence spaces, as follows.

Let 
$(p:\mathcal{P}\to N, \omega)$ be a lightlike Cartan geometry on $N$. The distribution $\mathcal{V}(N):= \mathcal{P}\times _{H}(\mathfrak{p}/ \mathfrak{h}) \subset TN$ is one-dimensional and therefore integrable. Moreover, $P/H\simeq \R_{>0}$ is connected. Assume the curvature function 
$
k^{N}:\mathcal{P}\to \Lambda^{2}(\mathfrak{g}/\mathfrak{h})^{*}\otimes \mathfrak{g}
$
satisfies 
\begin{equation}\label{050125A}
 k^{N}(b)(E+\mathfrak{h}, \cdot)=0,   
\end{equation}
for every $b\in\mathcal{P}$.  Therefore, as discussed in Subsection \ref{12062025}, a direct application of \cite[Theorem 1.5.14]{CS09} shows that for any sufficiently small open subset $U\subset N$, the orbit space $U/Z^{\omega}$ 
admits a unique Cartan geometry of type $(O^{+}(m+1,1),P)$ such that the restricted Cartan geometry $(p:p ^{-1}(U) \to U, \omega\vert_{U})$ is isomorphic 
to the fiber bundle of scales of $(p^{-1}(U)\to U/Z^{\omega}, \omega\vert_{U}).$ 

\begin{remark}\label{1308202594875}
    {\rm Let $(p:\mathcal{P}\to N, \omega)$ be a lightlike Cartan geometry on $N$.  Its curvature $K$ is horizontal, that is, $K\left(\zeta_{X}, \cdot\right)=0$  for every $X\in \mathfrak{h}$.  Therefore, the curvature condition $(\ref{050125A})$ is equivalent to
$$
K\left(\omega^{-1}(E), \omega^{-1}(e)\right)=0
$$
  for every $e\in\mathfrak{g}_{-1}.$   Taking into account that $E$ is the grading element of the decomposition $
\mathfrak{g}=\mathfrak{g}_{-1}\oplus \mathfrak{g}_{0} \oplus \mathfrak{g}_{1}$, we conclude that (\ref{050125A})
is equivalent to
$$
\omega\left([\omega^{-1}(E), \omega^{-1}(e)]\right)=-e, \quad e\in\mathfrak{g}_{-1}.
$$
    }
\end{remark}

\section{The standard tractor bundle of a lightlike Cartan geometry}\label{27102024y}

Let $(p:\mathcal{P}\rightarrow N,\omega)$ be a lightlike Cartan geometry. We now particularize the general construction of tractor bundles given in Subsection \ref{18062025A} to the standard representation of 
$O^{+}(m+1, 1)$ on $\L^{m+2}$ by linear isometries.

\begin{definition}\label{23102024A}
	 Let $\mathcal{T}:=\mathcal{P} \times_H \mathbb{L}^{m+2}\rightarrow N$ be the tractor bundle for the standard representation of $O^{+}(m+1, 1)$.  The vector bundle $\mathcal{T}$ is called the standard tractor bundle of the lightlike Cartan geometry $(p:\mathcal{P}\rightarrow N,\omega)$.  We write $\nabla^\mathcal{T}$ for the canonical linear connection on $\mathcal{T}\to N$.
\end{definition}

Therefore, by construction, the vector bundle $\mathcal{T}$ carries a canonical bundle-like metric $\mathbf{h}$ of Lorentzian signature given by
	$$
	\mathbf{h}([b,\,v_{1}], [b,\,v_{2}])=\langle v_{1}, v_{2}\rangle,
	$$
where $\langle \,,\, \rangle$ denotes the Lorentzian metric on $\mathbb{L}^{m+2}$ and $[b,\,v_{1}], [b,\, v_{2}]\in \mathcal{T}.$ Recall that the linear connection $\nabla^{\mathcal{T}}$ is metric with respect to $\bf{h}$.

\begin{proposition}\label{050125F}
    Let $(p:\mathcal{P}\rightarrow N,\omega)$ be a lightlike Cartan geometry with standard tractor bundle $\mathcal{T}\to N$. Then, there exists a distinguished lightlike section $\xi\in\Gamma(\mathcal{T})$ such that the following morphism of vector bundles over $N$,
        $$
        \Phi:=\nabla^{\mathcal{T}}\xi\colon TN\longrightarrow \mathcal{T}, \quad W\mapsto \nabla^{\mathcal{T}}_{W}\xi,
        $$ is a monomorphism and an isometry between $\left(T_x N,h^{\omega}_x\right)$ and $\left(\mathcal{T}_x,\mathbf{h}_x\right)$, for every $x\in N$. Moreover, $\Phi(Z^{\omega})=\xi$. 
    
\end{proposition}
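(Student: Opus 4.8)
The plan is to realize $\xi$ as the section of $\mathcal{T}$ associated with a \emph{constant} $H$-equivariant function on $\mathcal{P}$, to compute $\nabla^{\mathcal{T}}\xi$ from the explicit tractor-connection formula (\ref{13102023A}), and then to recognize the resulting bundle map as the one induced by an explicit $H$-equivariant linear injection $\iota\colon\mathfrak{g}/\mathfrak{h}\hookrightarrow\L^{m+2}$; once the identity $\Phi=\mathcal{P}\times_{H}\iota$ is in place, the three assertions all follow from linear algebra in the homogeneous model. First I would construct $\xi$: since $H$ is by definition the isotropy subgroup of $\ell\in\mathcal{N}^{m+1}$, the standard representation $\rho$ satisfies $\rho(h)\ell=\ell$ for all $h\in H$, so the constant map $\mathcal{P}\to\L^{m+2}$, $u\mapsto\ell$, is $H$-equivariant and defines a global, nowhere-zero section $\xi\in\Gamma(\mathcal{T})$ with $\xi(x)=[u,\ell]$ for $u\in p^{-1}(x)$; since $\mathbf{h}(\xi,\xi)=\langle\ell,\ell\rangle=0$, it is lightlike.

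Next I would compute $\Phi=\nabla^{\mathcal{T}}\xi$. With respect to a local section $\hat s\colon U\to\mathcal{P}$ the frame function of $\xi$ is the constant $\ell$, so (\ref{13102023A}) — using $j^{*}\gamma=\omega$ and that the derivative of the standard representation is the inclusion $\mathfrak{g}\hookrightarrow\mathfrak{gl}(m+2,\R)$ — yields, for $W\in\mathfrak{X}(N)$ and any lift $\widetilde W$ of $W$ to $\mathcal{P}$,
$$\Phi(W)=\nabla^{\mathcal{T}}_{W}\xi=\big[\,u,\ \omega(u)(\widetilde W_{u})\cdot\ell\,\big].$$
Writing a generic $A\in\mathfrak{g}$ in the block form of Section \ref{13082025yt} one checks $A\cdot\ell=(a,X,0)^{t}$ in the basis $\mathcal{B}$; thus $A\cdot\ell$ depends only on the $\mathfrak{g}_{-1}\oplus\mathfrak{z}(\mathfrak{g}_{0})$-part of $A$ and vanishes for $A\in\mathfrak{h}$ (which also re-confirms independence of the chosen lift), so $A\mapsto A\cdot\ell$ descends to a linear injection $\iota\colon\mathfrak{g}/\mathfrak{h}\to\L^{m+2}$ which in the coordinates (\ref{151}) reads $\iota(a,X)=(a,X,0)^{t}$, with image $\ell^{\perp}$. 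The identity $\mathrm{Ad}(h)(A)\cdot\ell=hAh^{-1}\ell=h(A\cdot\ell)$, valid since $h^{-1}\ell=\ell$, shows $\iota$ intertwines $\underline{\mathrm{Ad}}$ with $\rho|_{H}$; comparing the displayed formula with the canonical isomorphism (\ref{142}) then identifies $\Phi$ with the bundle map $\mathcal{P}\times_{H}\iota$, which is fibrewise injective, so $\Phi$ is a vector bundle monomorphism.

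Finally, for the metric and radical statements: pulling back $\mathbf{h}$ along $\Phi$ is fibrewise the pullback of $\langle\,,\,\rangle$ along $\iota$, and a short computation with the matrix $S$ of (\ref{070125A}) gives $\langle\iota(a,X),\iota(b,Y)\rangle=g_{\R^{m}}(X,Y)=q\big((a,X),(b,Y)\big)$, i.e.\ $\iota$ is an isometry of $(\mathfrak{g}/\mathfrak{h},q)$ onto its image; as $h^{\omega}$ is the bundle metric induced by $q$ through (\ref{142}), it follows that $\Phi$ restricts to an isometry between $(T_{x}N,h^{\omega}_{x})$ and its image in $(\mathcal{T}_{x},\mathbf{h}_{x})$ for every $x\in N$. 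And by Remark \ref{281224A}(3) the field $Z^{\omega}$ corresponds under (\ref{142}) to $[u,E+\mathfrak{h}]$, which is $(1,0)\in\R\times\R^{m}$, so $\Phi(Z^{\omega})=[u,\iota(1,0)]=[u,\ell]=\xi$.

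I do not expect a serious obstacle: the substance of the proof is the identification $\Phi=\mathcal{P}\times_{H}\iota$, after which everything is algebra. The only delicate point is the bookkeeping among the identifications involved — the canonical $TN\cong\mathcal{P}\times_{H}(\mathfrak{g}/\mathfrak{h})$ of (\ref{142}), the coordinates (\ref{151}) on $\mathfrak{g}/\mathfrak{h}$, the basis $\mathcal{B}$ on $\L^{m+2}$, and the connection formula (\ref{13102023A}) — together with verifying that the description of $h^{\omega}$ and $Z^{\omega}$ via the $q$- and $(1,0)^{t}$-data matches that of \cite[Theorem 4.4]{PacoLuz}.
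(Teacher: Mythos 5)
Your proposal is correct and follows essentially the same route as the paper's proof: the section $\xi=[u,\ell]$ is constructed from the isotropy of $\ell$ under $H$, the covariant derivative is computed from formula (\ref{13102023A}) to give $\nabla^{\mathcal{T}}_{W}\xi=[s(x),\omega(s(x))(T_xs\cdot W_x)(\ell)]$, injectivity follows from the fact that $A\cdot\ell=(a,X,0)^{t}$ vanishes exactly when $A\in\mathfrak{h}$, the isometry follows from comparing $\langle\,,\,\rangle$ on $\ell^{\perp}$ with $q$ on $\mathfrak{g}/\mathfrak{h}$, and $\Phi(Z^{\omega})=\xi$ from the grading element $E$. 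Your packaging of the computation as the associated-bundle map $\mathcal{P}\times_{H}\iota$ for the equivariant injection $\iota(a,X)=(a,X,0)^{t}$ is a mild structural reorganization of the same argument, not a different method.
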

\begin{proof}
 
Taking into account that $H$ is the isotropy subgroup of the element $\ell \in \mathcal{N}^{m+1}$, we obtain a well-defined distinguished section $\xi\in\Gamma(\mathcal{T})$ given by 
$$
\xi_x:=[b,\,\ell]\in\mathcal{T}_x,
$$
where $b\in\mathcal{P}$ with $p(b)=x$. It is clear that
$
\mathbf{h}\left(\xi,\xi\right)=\langle\ell,\ell\rangle=0.
$

For every open subset $U\subset N$, we have $\xi|_U = [s,\, \ell]$ for some local smooth section $s\colon U\to \mathcal{P}$. Therefore, for every $W\in \mathfrak{X}(N)$, a direct computation using (\ref{13102023A}) yields, at each point $x\in U$,
\begin{equation}\label{050125D}
   \nabla^\mathcal{T}_{W_{x}}\xi=\left[s(x),\,\omega(s(x))(T_{x}s\cdot W_{x})(\ell)\right], 
\end{equation}
where we have used that $\rho$ is the standard representation of $O^{+}(m+1,1)$. Now, assume that $\nabla^\mathcal{T}_{W_{x}}\xi=\left[s(x),0\right]$, that is, \begin{equation}\label{050125B}
\omega(s(x))(T_{x}s\cdot W_{x})(\ell)=0.
\end{equation}
Recall that $\omega(s(x))(T_{x}s\cdot W_{x})\in\mathfrak{g}$. Thus, it can be written in the form
$$
\omega(s(x))(T_x s\cdot W_{x})=\left(\begin{array}{ccc} a & Z & 0\\ X & A & -Z^{t} \\ 0 & -X^{t} & -a \end{array} \right)\in\mathfrak{g}.
$$
Consequently, Equation ($\ref{050125B}$) implies that $a=0$ and $X=0$.
From the definition of a Cartan connection, it follows that there exists a unique $Y\in\mathfrak{h}$ such that $T_x s\cdot W_{x}=\zeta_{Y}\left(s(x)\right)$, where $\zeta_Y$ is the fundamental vector field corresponding to $Y\in\mathfrak{h}$. Since $s$ is a section, we have $T_{s(x)}p\cdot T_x s\cdot W_{x}=W_{x}$. On the other hand, we also know that $T_{s(x)}p\cdot \zeta_{Y}\left(s(x)\right)=0$. It follows that $W_{x}=0$. Therefore, $\Phi=\nabla^{\mathcal{T}}\xi$ is a monomorphism of vector bundles over $N$.
 
From $(\ref{20062025A})$ and $(\ref{050125D})$, and taking into account \cite[Theorem 4.4]{PacoLuz}, we obtain that for all $V,W\in \mathfrak{X}(N)$ and $x\in N$,
 \begin{align*}
  \mathbf{h}_x\left(\nabla^{\mathcal{T}}_{V_{x}}\xi,\nabla^{\mathcal{T}}_{W_{x}}\xi\right) & = \Big\langle \omega(s(x))(T_{x}s\cdot V_{x})(\ell) , \omega(s(x))(T_{x}s\cdot W_{x})(\ell)\Big\rangle \\
  & =q\Big( \theta(s(x))(T_{x}s\cdot V_{x}) , \theta(s(x))(T_{x}s\cdot W_{x})\Big)= h^{\omega}_x(V_{x}, W_{x}),
\end{align*}
which shows that $\Phi$ is an isometry. Finally, from the definition of $Z^{\omega}$ given in \cite[Theorem 4.4]{PacoLuz}, a straightforward computation yields $\nabla^{\mathcal{T}}_{Z^{\omega}}\xi= \xi$.
\end{proof}

\begin{remark}\label{220825A}
{\rm As a consequence of the previous Proposition, the standard tractor bundle $\mathcal{T}\to N$ can be viewed as an extension of the tangent bundle $TN$. Moreover, $\mathcal{T}$ is naturally equipped with a canonical metric linear connection $\nabla^{\mathcal{T}}$. This stands in sharp contrast to the situation on lightlike manifolds, which, in general, do not admit a natural metric connection.
}
\end{remark}

Starting from a lightlike Cartan geometry $(p:\mathcal{P}\to N,\omega)$,  we have constructed the standard tractor bundle $(\mathcal{T},\mathbf{h},\nabla^{\mathcal{T}},\xi)$. The next natural step is to investigate the converse problem: given a quadruple $(\mathcal{T},\mathbf{h},\nabla^{\mathcal{T}},\xi)$ satisfying the properties of Proposition \ref{050125F}, we aim to reconstruct a lightlike Cartan geometry $(p:\mathcal{P}\to N, \omega)$, and show that this procedure effectively inverts the original construction. To this end, we introduce an abstract formulation of the vector bundle given in Definition \ref{23102024A} and Proposition \ref{050125F} (see Definition \ref{16102024A} below). This will allow us to derive an equivalent characterization of lightlike Cartan geometries in terms of standard tractor bundles. A perspective that is well established for various types of Cartan geometries. We include a brief outline of this construction here for the sake of completeness.

\begin{definition}\label{16102024A} 
Let $N$ be an $(m+1)$-dimensional manifold. A lightlike extension vector bundle over $N$ consists of the following data:
\begin{enumerate}
    \item A rank $m + 2$ real vector bundle $\mathcal{V}\rightarrow N$ endowed with a bundle-like metric $\mathbf{g}$ of Lorentzian signature and a linear connection $\nabla^{\mathcal{V}}$ such that 
		$\nabla^{\mathcal{V}} \mathbf{g}=0$. 
    \item A distinguished lightlike section $\eta \in\Gamma(\mathcal{V})$ such that the map 
		$$
			\Psi(w):=\nabla_w^{\mathcal{V}}\eta,\,\text{where }x\in N\,\text{and }w\in T_x N,
		$$
        defines a monomorphism of vector bundles over $N$,
		$$
			\begin{tikzpicture}
				
				\node (A) at (0,0) {$TN$};
				\node (B) at (2,0) {$\mathcal{V}$};
				\node (C) at (1,-1) {$N$};
				\node (D) at  (1,0.3) {$\Psi$};
				
				\draw[->] (A) -- (B);
				\draw[->] (B) -- (C);
				\draw[<-] (C) -- (A);
			\end{tikzpicture}
		$$
		
\end{enumerate}

\end{definition}
In this terminology, Definition \ref{23102024A} and Proposition \ref{050125F} constructs a lightlike extension vector bundle over $N$ from a lightlike Cartan geometry $(p:\mathcal{P}\to N, \omega)$.

Let us note that a lightlike extension vector bundle $(\mathcal{V}, \bf{g}, \nabla^{\mathcal{V}}, \eta)$ endows the manifold $N$ with a lightlike metric $g$ and a vector field $Y\in \mathfrak{X}(N)$ that spans its radical. Specifically, we define
\begin{equation*}
g(V,W):=\mathbf{g}\left(\Psi(V),\Psi(W)\right),
\end{equation*}
for all $V,W\in\mathfrak{X}(N)$. Since $\nabla^{\mathcal{V}}$ is metric with respect to $\bf{g}$, it follows that $\mathbf{g}(\Psi(V), \eta)=0$.
As $\Psi$ is a vector bundle monomorphism, it follows from a dimensional argument that $\Psi(T_x N)=\eta_x^{\perp}$ for every $x\in N$. Therefore, we have $g=\left.\mathbf{g}\right|_{\eta^{\perp}}$, and hence $g$ is a lightlike metric on $N$.
Moreover, there exists a unique lightlike vector field $Y\in\mathfrak{X}(N)$ such that $\Psi\left(Y\right)=\eta$. Since $\eta$ globally spans the radical of $\left.\mathbf{g}\right|_{\xi^{\perp}}$, it follows that $Y$ globally spans the radical of $g$. 

\begin{remark}\label{14082025ytuigh}
{\rm Let $(p:\mathcal{P}\to N,\omega)$ be a lightlike Cartan geometry, and let $(\mathcal{T},\mathbf{h},\nabla^{\mathcal{T}},\xi)$ be its associated standard tractor bundle. Then, it is clear that the lightlike metric and the vector field spanning its radical, as induced by the standard tractor bundle, coincide with $h^{\omega}$ and $Z^{\omega}$, respectively.}
\end{remark}

As previously mentioned, we now outline the converse construction. Starting from a lightlike extension vector bundle $(\mathcal{V}, \bf{g}, \nabla^{\mathcal{V}}, \eta)$,  we construct a lightlike Cartan geometry $(p:\mathcal{P}\to N, \omega)$ such that its associated standard tractor bundle coincides with $(\mathcal{V}, \bf{g}, \nabla^{\mathcal{V}}, \eta)$. To this end, we consider the $O^{+}(m+1,1)$-principal fiber bundle of $\mathbf{g}$-orthonormal frames of $\mathcal{V}\to N$, defined by
 $$
 \bar{\mathcal{P}}:=\left\{u\colon \L^{m+2}\to \left(\mathcal{V}_{x},\mathbf{g}_x\right): x\in N \textrm{ and } u \textrm{ is an isometry with } \mathbf{g}_x(u(\ell)-u(n),\eta_{x})<0 \right\}.
 $$
 Recall that $\L^{m+2}$ is equipped with a fixed basis $\mathcal{B}=(\ell,e_{1},e_{2},\dots , e_{m}, n )$, where $\ell\in \mathcal{N}^{m+1}$, and such that the corresponding matrix to $\langle \,,\,\rangle$ is given by (\ref{070125A}). In other words, $ \bar{\mathcal{P}}$ consists of the 
 $\mathbf{g}$-orthonormal frames that preserve the time orientation of $\L^{m+2}$ determined by the timelike vector $\ell-n$.  Thus, $\mathcal{V}$ is the associated vector bundle  $ \bar{\mathcal{P}}\times_{O^{+}(m+1,1)}\L^{m+2}$.

 In order to obtain the desired Cartan connection, we restrict 
$\bar{\mathcal{P}}$ to a principal fiber subbundle $\mathcal{P}$,
  which will serve as the underlying principal fiber bundle of the Cartan geometry. We define $\mathcal{P}$ by 
 $$
 \mathcal{P}:=\left\{u \in \bar{\mathcal{P}}: u(\ell)=\eta_{x}\right\}.
 $$
It is straightforward to verify that the natural inclusion $j\colon \mathcal{P}\hookrightarrow \bar{\mathcal{P}} $ defines a reduction of the structure group from $O^{+}(m+1, 1)$ to $H$. Moreover, we have the identification $\mathcal{P}\times_{H}O^{+}(m+1,1)\cong \bar{\mathcal{P}}.$ Note that $\mathcal{P}$ can be interpreted as the principal fiber bundle of admissible linear frames for the lightlike metric $g$ on $N$, via the map
$$
\mathcal{P}\to \mathcal{Q}, \quad u \longmapsto \left(Y_x,\Psi ^{-1}(u(e_{1})), \dots , \Psi^{-1}(u(e_{m}))\right).
$$
The inverse map is well-defined. Given an admissible linear frame $\left(Y_x,w_{1}, \dots , w_{m}\right) \in \mathcal{Q}$, there exists a unique isometry $u\colon \L^{m+2}\to\left(\mathcal{V}_{x},\mathbf{g}_x\right)$ such that $u(\ell)= \eta_{x}$, $u(e_{j})=\Psi(w_{j})$ for $j=1,\dots, m,$ and  $\textbf{g}_x(u(n), \eta_{x})=1$.

Observe that, in the identification $\mathcal{V}\cong \mathcal{P}\times_{H}\L^{m+2}$, the distinguished lightlike section $\eta \in\Gamma(\mathcal{V})$
 is given at each point $x\in N$ by
 $$
 \eta_{x}=[u, \ell]
 $$
 for any $u\in \mathcal{P}$ over $x.$

To conclude, there exists a unique principal connection $\gamma\in\Omega^{1}\big(\bar{\mathcal{P}},\mathfrak{g}\big)$ whose associated linear connection is precisely $\nabla^{\mathcal{V}}$. Let us consider $\omega:=j^{*}\gamma \in \Omega^{1}(\mathcal{P}, \mathfrak{g})$. Then the pair $(p: \mathcal{P}\to N, \omega)$ defines a lightlike Cartan geometry. Indeed, properties (b) and (c) of $\omega$ follow directly from the fact that $\gamma$ is a principal connection and $j\colon \mathcal{P}\hookrightarrow \bar{\mathcal{P}}$ is a reduction of the structure group from $O^{+}(m+1, 1)$ to $H$. Property (a) follows from a computation analogous to that in Equation $(\ref{050125D})$ of Proposition \ref{050125F}, taking into account that $\Psi$ is a monomorphism.

Finally, it is clear that this construction yields the inverse procedure to that used for obtaining a lightlike extension vector bundle over $N$ from a lightlike Cartan geometry $(p: \mathcal{P}\to N, \omega)$. In particular, every lightlike extension vector bundle arises, in an injective manner, as the standard tractor bundle associated with a lightlike Cartan geometry. This observation justifies the adoption of a unified notation for lightlike extension vector bundles and standard tractor bundles. Consequently, we obtain an equivalent description of lightlike Cartan geometries in terms of such vector bundles. The following diagram summarizes the results established in this Section.

$$
			\begin{tikzpicture}
				
				\node (A) at (0,0) {$(p:\mathcal{P}\to N, \omega)$};
				\node (B) at (4,0) {$(\mathcal{T},\mathbf{h},\nabla^{\mathcal{T}},\xi)$};
				\node (C) at (2,-1.5) {$(N,h^{\omega},Z^{\omega})$};
				
				\draw[<->] (A) -- (B);
				\draw[->] (B) -- (C);
				\draw[<-] (C) -- (A);
			\end{tikzpicture}
$$

\begin{remark}
    {\rm In order to analyze the elements underlying a lightlike Cartan geometry $(p\colon \mathcal{P}\to N,\omega)$, we begin by observing that
    $$\mathbf{h}(R^{\mathcal{T}}(V,W)\xi,\xi)=0,$$ where $R^{\mathcal{T}}$ denotes the curvature tensor\footnote{Our sign convention for the curvature tensor of a linear connection is given by 
$$
R^{\mathcal{T}}(V,W)T=\nabla^{\mathcal{T}}_V \nabla^{\mathcal{T}}_W T- \nabla^{\mathcal{T}}_W \nabla^{\mathcal{T}}_V T- \nabla^{\mathcal{T}}_{[V,W]}T,
$$ 
for all $V,W\in\mathfrak{X}(N)$ and $T\in\Gamma(\mathcal{T})$.} of the tractor connection $\nabla^{\mathcal{T}}$. Consequently, the term $R^{\mathcal{T}}(V,W)\xi$  defines a tensor $\mathbf{T}^{\omega}\in \Gamma(\Lambda^{2}T^{*}N\otimes TN)$ through the relation
\begin{equation}\label{040825A}
    R^{\mathcal{T}}(V,W)\xi=\Phi(\mathbf{T}^{\omega}(V,W)).
\end{equation}}
\end{remark}

\begin{remark}\label{190825A}
    {\rm In Remark $\ref{1308202594875}$,  lightlike Cartan geometries \((p: \mathcal{P} \to N, \omega)\) that are locally the bundle of scales associated with a Riemannian conformal structure were characterized in terms of their curvature by the condition 
$
K(\omega^{-1}(E), \cdot) = 0,
$
where \(E\) is the grading element in the decomposition 
$
\mathfrak{g} = \mathfrak{g}_{-1} \oplus \mathfrak{g}_0 \oplus \mathfrak{g}_1.
$
By \cite[Proposition 1.3.4(5)]{CS09}, this curvature condition can be equivalently reformulated as a property of the curvature of the associated standard tractor bundle \((\mathcal{T}, \mathbf{h}, \nabla^{\mathcal{T}}, \xi)\), namely:
\[
R^{\mathcal{T}}(Z^{\omega}, V)T = 0,
\]
for every \(V \in \mathfrak{X}(N)\) and \(T \in \Gamma(\mathcal{T})\).}
\end{remark}

\section{Identification of additional geometric structures via standard tractor bundles}\label{14082025847564857}
As shown in Remark \ref{281224A}, different lightlike Cartan connections may induce the same lightlike metric and the same radical-generating vector field, since both are completely determined by the soldering form, that is, by the component of the connection taking values in $\mathfrak{g}_{-1}\oplus \mathfrak{z}(\mathfrak{g}_{0})$. This reveals that the correspondence between lightlike Cartan geometries and the underlying lightlike structures is not injective. In particular, a lightlike Cartan connection encodes additional geometric information that is not captured by the metric data alone. To extract and analyze this additional structure, we will use standard tractor bundles as a fundamental tool.

\subsection{Decomposition of the standard tractor bundle}\label{27102024p}

Let $(N, h,Z)$ be an $(m+1)$-dimensional lightlike manifold. Consider the subbundle of the cotangent bundle $T^{*}N$ defined by
    $$\mathbb{B}:=\left\{\tau_{x}\in T^{*}_{x}N: x\in N,\,\tau_{x}(Z_{x})=1 \right\}.
    $$
    The set of sections of $\mathbb{B}$,
    $$
    S(N,h,Z):=\left\{\tau\in\Omega^1(N,\R):\tau(Z)=1\right\},
    $$
    is called the bundle of screen distributions associated to $(N,h,Z)$. A standard partition of unity argument guarantees that $ S(N,h,Z)\neq \emptyset$.
Each $\tau\in S(N,h,Z)$ determines a (screen) distribution given by the subbundle $\mathrm{An}\,(\tau):=\{v\in TN: \tau(v)=0\}$ and an associated field of endomorphisms
\begin{equation}\label{13112024}
P^{\tau}(v)=v-\tau(v)Z,\quad v\in TN.
\end{equation}
Note that $h$ induces a positive definite bundle-like metric on $\mathrm{An}\,(\tau)\to N$. By a slight abuse of notation, we shall denote the induced metric on $\mathrm{An}\,(\tau)$ also by $h$. We also have the following decomposition of the tangent bundle,
\begin{equation}\label{02072025A}
TN\overset{\tau}{= } \underline{\R}\oplus \mathrm{An}\,(\tau),\quad v\mapsto \left(\tau(v), P^{\tau}(v)\right),
\end{equation}
where $\underline{\R}:=N\times\R\rightarrow N$ denotes the trivial bundle. 

\begin{remark}
{\rm A Leibnizian spacetime is a triple $(M, \Omega, g )$, where $M$ is a manifold, $\Omega$ is a nowhere-vanishing $1$-form, and $g$ is a positive definite bundle-like metric defined on the vector bundle $\mathrm{An}\,(\Omega)\to M$, see details in \cite{SB03}. In our framework, each choice of $\tau \in S(N,h,Z)$ determines a Leibnizian spacetime $(N,\tau,h)$. The map $P^{\tau}$ is referred to as the spacelike projection along $Z$.} 
\end{remark}

Assume now that we have a standard tractor bundle  $(\mathcal{T}, \mathbf{h},\nabla^{\mathcal{T}}, \xi)$ constructed from a lightlike Cartan geometry $(p: \mathcal{P}\to N,\omega)$. For each $\tau \in S(N, h^{\omega}, Z^{\omega})$, there exists a unique lightlike section $\eta^{\tau}\in \Gamma(\mathcal{T})$ determined by the conditions
$$
\textbf{h}(\xi, \eta^{\tau})=1, \quad \textbf{h}(\Phi(\mathrm{An}\,(\tau)), \eta^{\tau})=0,
$$
where $\Phi$ is the map given in Proposition $\ref{050125F}$. This yields a $\tau$-dependent decomposition of the standard tractor bundle given by
\begin{equation}\label{01072025A}
\mathcal{T}\overset{\tau}{\longrightarrow} \underline{\R}\oplus \mathrm{An}\,(\tau)\oplus \underline{\R},\quad T_{x}\mapsto T^{\tau}_{x}:=\begin{pmatrix}\mathbf{h}_x(T_{x}, \eta^{\tau}_{x}) \\ X_x \\  \mathbf{h}_x(T_{x}, \xi_{x})\end{pmatrix},
\end{equation}
where $X_x\in \mathrm{An}\, (\tau_{x})$ and $T_{x}=\mathbf{h}_x(T_{x}, \eta^{\tau}_{x})\, \xi_{x}+ \Phi(X_x)+ \mathbf{h}_x(T_{x}, \xi_{x})\,\eta^{\tau}_{x}\, \in \mathcal{T}_{x}$.
Accordingly, for any $T\in\Gamma\left(\mathcal{T}\right)$, its expression in the decomposition associated with $\tau$ is given by
$$
	T^{\tau}=\begin{pmatrix}\alpha \\ X \\  \beta\end{pmatrix},\,\,\text{with } \alpha,\beta\in\mathcal{C}^{\infty}(N,\R) \,\,\text{and}\,\,X\in\Gamma(\mathrm{An}\,(\tau)).
	$$
In this decomposition, the bundle-like metric $\mathbf{h}$ is given by
	$$
	\mathbf{h}\left(\begin{pmatrix} \alpha_{1} \\ X_1 \\  \beta_1\end{pmatrix},\begin{pmatrix}\alpha_2 \\ X_2 \\  \beta_2\end{pmatrix}\right)=\alpha_1\beta_2+\beta_1\alpha_2+h^{\omega}\big(X_1,X_2\big).
	$$
Note that for every $W\in\mathfrak{X}(N)$, the $1$-form $\tau$ can be recovered via the identity
\begin{equation}\label{100125A}
   \tau(W)=\mathbf{h}(\Phi(W),\eta^{\tau}). 
\end{equation}
Given two $1$-forms $\tau,\overline{\tau}\in S(N,h^{\omega},Z^{\omega})$, observe that  $\mathbf{h}(\xi,\eta^{\tau}-\eta^{\overline{\tau}})=0 $, which implies that $\eta^{\tau}-\eta^{\overline{\tau}}\in \Phi(TN)$. We therefore define the difference vector field between $\tau$ and $\bar{\tau}$ by 
$$
K_{\tau, \bar{\tau}}:=\Phi^{-1}(\eta^{\tau}- \eta^{\bar{\tau}})\in \mathfrak{X}(N).
$$
Let $(E_{1}, \dots, E_{m})$ be a local frame of $\mathrm{An}\, (\bar{\tau})$ with $h^{\omega}(E_{i},E_{j})=\delta_{ij}$. We define the vector field
\begin{equation}\label{30072893845}
   L_{\tau, \bar{\tau}}:=\sum_{i=1}^{m}\tau(E_{i})E_{i}.
\end{equation}
Although it is expressed in terms of a local frame, $L_{\tau, \bar{\tau}}$ is globally well-defined, meaning that its definition is independent of the choice of orthonormal frame and thus determines a smooth vector field on the whole manifold. We record here two elementary properties of these vector fields:
\begin{equation}\label{090825D}
L_{\tau,\overline{\tau}}+L_{\overline{\tau},\tau}=h^{\omega}(L_{\tau,\overline{\tau}}, L_{\tau,\overline{\tau}})Z^{\omega}\quad \textrm{ and }\quad L_{\alpha, \tau}+L_{ \tau, \overline{\tau}}=L_{\alpha ,\overline{\tau}}+h^{\omega}(L_{\alpha, \tau}, L_{\tau,\overline{\tau}})Z^{\omega},
\end{equation}
for every $\alpha,\tau, \overline{\tau} \in S(N,h^{\omega},Z^{\omega}).$

\vspace{2mm}

A natural question is how the decompositions of $\mathcal{T}$ associated with two different 1-forms $\tau, \bar{\tau} \in S(N, h^{\omega}, Z^{\omega})$ are related.
To address this, we establish the following results.
\begin{lemma}\label{27102024D}
Let $(\mathcal{T}, \mathbf{h},\nabla^{\mathcal{T}}, \xi)$ be the standard tractor bundle constructed from a lightlike Cartan geometry $(p: \mathcal{P}\to N,\omega)$. Given two $1$-forms $\tau,\overline{\tau}\in S(N,h^{\omega},Z^{\omega})$ and $W\in\mathfrak{X}(N)$, the following identities hold:
\begin{enumerate}
\item[$(1)$] $\mathbf{h}(\eta^{\tau},\eta^{\overline{\tau}})=-\tau\left(K_{\tau, \bar{\tau}}\right)=\overline{\tau}\left(K_{\tau, \bar{\tau}}\right)$.
\item[$(2)$] $h^{\omega}\big(W,K_{\tau, \bar{\tau}}\big)=\tau(W)-\overline{\tau}(W)$.
\item[$(3)$]  $K_{\tau, \bar{\tau}}=L_{\tau, \bar{\tau}}-\frac{1}{2}h^{\omega}(L_{\tau, \bar{\tau}},L_{\tau, \bar{\tau}})Z^{\omega}.$
\end{enumerate}
\end{lemma}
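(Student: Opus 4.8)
The strategy is to work directly from the defining properties of $\xi$, $\eta^\tau$, $\eta^{\bar\tau}$, the isometry $\Phi$, and the difference vector field $K_{\tau,\bar\tau}=\Phi^{-1}(\eta^\tau-\eta^{\bar\tau})$, exploiting compatibility of $\mathbf{h}$ with everything in sight. For item $(1)$, I would write $\eta^\tau=\eta^{\bar\tau}+\Phi(K_{\tau,\bar\tau})$, take $\mathbf{h}$ with $\eta^{\bar\tau}$, and use $\mathbf{h}(\eta^{\bar\tau},\eta^{\bar\tau})=0$ (it is lightlike) together with $\mathbf{h}(\Phi(K_{\tau,\bar\tau}),\eta^{\bar\tau})=\bar\tau(K_{\tau,\bar\tau})$, the latter being exactly the recovery identity $(\ref{100125A})$ applied to $\bar\tau$. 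This gives $\mathbf{h}(\eta^\tau,\eta^{\bar\tau})=\bar\tau(K_{\tau,\bar\tau})$; symmetrically, pairing $\eta^{\bar\tau}=\eta^\tau-\Phi(K_{\tau,\bar\tau})$ with $\eta^\tau$ and using $(\ref{100125A})$ for $\tau$ yields $\mathbf{h}(\eta^\tau,\eta^{\bar\tau})=-\tau(K_{\tau,\bar\tau})$. (In particular this forces $\tau(K_{\tau,\bar\tau})=-\bar\tau(K_{\tau,\bar\tau})$, which is consistent with item $(2)$ evaluated on $W=K_{\tau,\bar\tau}$ since $\tau-\bar\tau$ annihilates nothing special but $h^\omega(K,K)$ will turn out to equal $\tau(K)-\bar\tau(K)$, cf. the computations feeding item $(3)$.)

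For item $(2)$, I would compute $h^\omega(W,K_{\tau,\bar\tau})=\mathbf{h}(\Phi(W),\Phi(K_{\tau,\bar\tau}))$ using that $\Phi$ is an isometry from $(TN,h^\omega)$ to $(\mathcal{T},\mathbf{h})$, then substitute $\Phi(K_{\tau,\bar\tau})=\eta^\tau-\eta^{\bar\tau}$ and apply $(\ref{100125A})$ twice: $\mathbf{h}(\Phi(W),\eta^\tau)=\tau(W)$ and $\mathbf{h}(\Phi(W),\eta^{\bar\tau})=\bar\tau(W)$. This is immediate. For item $(3)$, set $K:=K_{\tau,\bar\tau}$ and $L:=L_{\tau,\bar\tau}$; I would verify that $L-\tfrac12 h^\omega(L,L)Z^\omega$ satisfies the two conditions that characterize $K$, namely $h^\omega(W,K)=\tau(W)-\bar\tau(W)$ for all $W$ (item $(2)$) and, implicitly, that $\eta^{\bar\tau}+\Phi(K)$ is lightlike with $\mathbf{h}(\xi,\eta^{\bar\tau}+\Phi(K))=1$ and $\mathbf{h}(\Phi(\mathrm{An}(\tau)),\eta^{\bar\tau}+\Phi(K))=0$, i.e. that it equals $\eta^\tau$. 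Concretely: expand an arbitrary $W$ in the orthonormal frame $(E_1,\dots,E_m)$ of $\mathrm{An}(\bar\tau)$ plus a $Z^\omega$-component, $W=\bar\tau(W)Z^\omega+\sum_i h^\omega(W,E_i)E_i$; then $h^\omega(W,L)=\sum_i \tau(E_i)h^\omega(W,E_i)$, and since $\tau(Z^\omega)=\bar\tau(Z^\omega)=1$ one checks $\sum_i \tau(E_i)h^\omega(W,E_i)=\tau(W)-\bar\tau(W)+$ (a term involving $h^\omega(W,Z^\omega)$, which vanishes as $Z^\omega$ is radical). The $Z^\omega$-correction $-\tfrac12 h^\omega(L,L)Z^\omega$ contributes nothing to $h^\omega(W,\cdot)$, so item $(2)$ is reproduced; one then separately confirms that this correction is exactly what is needed to make $\eta^{\bar\tau}+\Phi(L)-\tfrac12 h^\omega(L,L)\Phi(Z^\omega)=\eta^{\bar\tau}+\Phi(L)-\tfrac12 h^\omega(L,L)\xi$ lightlike, using $\Phi(Z^\omega)=\xi$, $\mathbf{h}(\xi,\xi)=0$, $\mathbf{h}(\eta^{\bar\tau},\eta^{\bar\tau})=0$, $\mathbf{h}(\xi,\eta^{\bar\tau})=1$, $\mathbf{h}(\Phi(L),\eta^{\bar\tau})=\bar\tau(L)=0$ (since $L\in\mathrm{An}(\bar\tau)$ by construction, as each $E_i$ does), and $\mathbf{h}(\Phi(L),\xi)=h^\omega(L,Z^\omega)=0$: expanding $\mathbf{h}$ of the candidate with itself gives $2\cdot 1\cdot(-\tfrac12 h^\omega(L,L))+h^\omega(L,L)=0$.

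I do not expect a genuine obstacle here: the lemma is a bookkeeping exercise in the two-step filtration of $\mathcal{T}$ determined by $\xi$ and $\eta^\tau$, and every ingredient—the isometry property of $\Phi$, the recovery identity $(\ref{100125A})$, $\Phi(Z^\omega)=\xi$, metricity of $\nabla^{\mathcal{T}}$—has already been established in Proposition \ref{050125F} and the surrounding discussion. The one point requiring a little care is checking that $L_{\tau,\bar\tau}$ as defined via the frame in $(\ref{30072893845})$ indeed lies in $\mathrm{An}(\bar\tau)$ and that the identity $\sum_i \tau(E_i)h^\omega(W,E_i)=\tau(P^{\bar\tau}(W))$ holds, i.e. that contracting with the orthonormal frame of the $\bar\tau$-screen reconstructs $\tau$ restricted to that screen; this is where the hypothesis $\tau,\bar\tau\in S(N,h^\omega,Z^\omega)$ (both normalized on $Z^\omega$) is used, and it is also implicitly the content of the well-definedness claim made right after $(\ref{30072893845})$. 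Once that is in hand, items $(1)$–$(3)$ follow by the short manipulations above.
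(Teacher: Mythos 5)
Your proposal is correct and follows essentially the same route as the paper: items $(1)$ and $(2)$ are proved by exactly the same computations (the recovery identity $(\ref{100125A})$, lightlikeness of the $\eta$'s, and the isometry property of $\Phi$). The only cosmetic difference is in item $(3)$, where the paper computes $P^{\overline{\tau}}(K_{\tau,\overline{\tau}})=L_{\tau,\overline{\tau}}$ directly from $(2)$ and then pins down the $Z^{\omega}$-component via $h^{\omega}(K_{\tau,\overline{\tau}},K_{\tau,\overline{\tau}})=-2\overline{\tau}(K_{\tau,\overline{\tau}})$, whereas you verify that the proposed expression satisfies the conditions characterizing $\eta^{\tau}$; both arguments are valid and of comparable length.
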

\begin{proof}
    First, using identity (\ref{100125A}), we compute
    $
    \tau\left(K_{\tau, \bar{\tau}}\right)=\mathbf{h}(\eta^{\tau}-\eta^{\overline{\tau}}, \eta^{\tau})=-\mathbf{h}(\eta^{\tau},\eta^{\overline{\tau}}),
    $
    which establishes the first equality in $(1)$. Applying the same identity with $\bar{\tau}$ in place of  $\tau$ yields the second equality. To prove $(2)$, note that since $\Phi$ is an isometry, we have 
    $$
    h^{\omega}\big(W,K_{\tau, \bar{\tau}}\big)=\mathbf{h}(\Phi(W),\eta^{\tau}-\eta^{\overline{\tau}} )= \tau(W)- \bar{\tau}(W),
    $$
    as claimed. Finally, to prove $(3)$,  we deduce from $(2)$ that
 $$
 P^{\bar{\tau}}(K_{\tau, \bar{\tau}})=\sum_{i=1}^{m}h^{\omega}(K_{\tau, \bar{\tau}}, E_{i})E_{i}=\sum_{i=1}^{m}(\tau(E_{i})- \bar{\tau}(E_{i}))E_{i}=L_{\tau, \bar{\tau}}.
 $$ 
Moreover, as a consequence of $(1)$ and $(2)$, we obtain 
\begin{equation}\label{05072025A}
h^{\omega}(L_{\tau, \bar{\tau}}, L_{\tau, \bar{\tau}})=h^{\omega}( K_{\tau, \bar{\tau}},  K_{\tau, \bar{\tau}})=-2\bar{\tau}( K_{\tau, \bar{\tau}}).
\end{equation} 
Therefore, we arrive at the identity in $(3)$.
\end{proof}

\begin{proposition}\label{17102024D}
Let $(\mathcal{T},\mathbf{h},\nabla^{\mathcal{T}}, \xi)$ be the standard tractor bundle constructed from a lightlike Cartan geometry $(p: \mathcal{P}\to N,\omega)$, and let $T \in \Gamma(\mathcal{T})$ be a smooth section. Given two 1-forms $\tau, \overline{\tau} \in S(N, h^{\omega}, Z^{\omega})$, consider the expressions of $T$ with respect to the $\tau$ and $\overline{\tau}$ decompositions of $\mathcal{T}$ described in ${\rm (\ref{01072025A})}$, denoted by $T^{\tau}$ and $T^{\overline{\tau}}$, respectively. Then, the coordinate representations of $T$ in these two splittings are related by
$$T^{\overline{\tau}}=F_{\tau,\overline{\tau}}\cdot T^{\tau},$$ where
\begin{equation}\label{27102024R}
    F_{\tau,\overline{\tau}}:=\begin{pmatrix}
		1 & \overline{\tau}(\cdot) & -\frac{1}{2}h^{\omega}(L_{\tau, \bar{\tau}},L_{\tau, \bar{\tau}})\\
		0 & P^{\overline{\tau}}(\cdot) & L_{\tau, \bar{\tau}}\\
		0 & 0 & 1
	\end{pmatrix}
\end{equation}
and $P^{\overline{\tau}}$ is the map given in ${\rm (\ref{13112024})}$ for $\overline{\tau}$.

\end{proposition}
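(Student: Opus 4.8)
The plan is to expand an arbitrary section $T\in\Gamma(\mathcal{T})$ in the $\tau$-splitting, rewrite the distinguished section $\eta^{\tau}$ in terms of $\eta^{\overline{\tau}}$, and then read off the components in the $\overline{\tau}$-splitting. Write $T^{\tau}=(\alpha,X,\beta)^{t}$, so that by (\ref{01072025A}) one has $T=\alpha\,\xi+\Phi(X)+\beta\,\eta^{\tau}$ with $\alpha=\mathbf{h}(T,\eta^{\tau})$, $\beta=\mathbf{h}(T,\xi)$ and $X\in\Gamma(\mathrm{An}\,(\tau))$. By the definition of the difference vector field we have $\eta^{\tau}=\eta^{\overline{\tau}}+\Phi(K_{\tau,\overline{\tau}})$, hence
$$
T=\alpha\,\xi+\Phi\bigl(X+\beta\,K_{\tau,\overline{\tau}}\bigr)+\beta\,\eta^{\overline{\tau}}.
$$
Now $X+\beta\,K_{\tau,\overline{\tau}}$ is in general \emph{not} annihilated by $\overline{\tau}$, so I split it via (\ref{13112024}) as $P^{\overline{\tau}}(X+\beta\,K_{\tau,\overline{\tau}})+\overline{\tau}(X+\beta\,K_{\tau,\overline{\tau}})\,Z^{\omega}$; applying $\Phi$ and using $\Phi(Z^{\omega})=\xi$ from Proposition~\ref{050125F} puts $T$ into the $\overline{\tau}$-splitting, yielding
$$
\overline{\alpha}=\alpha+\overline{\tau}(X)+\beta\,\overline{\tau}(K_{\tau,\overline{\tau}}),\qquad
\overline{X}=P^{\overline{\tau}}(X)+\beta\,P^{\overline{\tau}}(K_{\tau,\overline{\tau}}),\qquad
\overline{\beta}=\beta.
$$

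It then remains to match these three expressions with the rows of $F_{\tau,\overline{\tau}}$ applied to $(\alpha,X,\beta)^{t}$. The last component is immediate since $\overline{\beta}=\beta=\mathbf{h}(T,\xi)$. For the middle one, the proof of Lemma~\ref{27102024D}$(3)$ gives $P^{\overline{\tau}}(K_{\tau,\overline{\tau}})=L_{\tau,\overline{\tau}}$, which lies in $\Gamma(\mathrm{An}\,(\overline{\tau}))$, so $\overline{X}=P^{\overline{\tau}}(X)+\beta\,L_{\tau,\overline{\tau}}$ is exactly the second row of $F_{\tau,\overline{\tau}}$, and in particular $\overline{X}$ is again a section of the screen distribution of $\overline{\tau}$, as it must be. For the first component, Lemma~\ref{27102024D}$(1)$ together with (\ref{05072025A}) gives $\overline{\tau}(K_{\tau,\overline{\tau}})=-\tfrac12\,h^{\omega}(L_{\tau,\overline{\tau}},L_{\tau,\overline{\tau}})$, so $\overline{\alpha}=\alpha+\overline{\tau}(X)-\tfrac12\,h^{\omega}(L_{\tau,\overline{\tau}},L_{\tau,\overline{\tau}})\,\beta$, which is the first row. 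This proves $T^{\overline{\tau}}=F_{\tau,\overline{\tau}}\cdot T^{\tau}$.

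The computation is essentially bookkeeping, and the only place where care is genuinely needed is the observation that $K_{\tau,\overline{\tau}}$ itself is not $\overline{\tau}$-horizontal: re-expressing $\Phi(K_{\tau,\overline{\tau}})$ in the $\overline{\tau}$-splitting therefore contributes a term along $\xi$, and Lemma~\ref{27102024D} is precisely what quantifies that term as $-\tfrac12\,h^{\omega}(L_{\tau,\overline{\tau}},L_{\tau,\overline{\tau}})$ while reducing the screen part to $L_{\tau,\overline{\tau}}$. As a consistency check, the transition matrices obtained in this way automatically satisfy the cocycle identity $F_{\overline{\tau},\tau'}\circ F_{\tau,\overline{\tau}}=F_{\tau,\tau'}$ for any third $\tau'\in S(N,h^{\omega},Z^{\omega})$, in agreement with the relations (\ref{090825D}); this is what will later allow the split models $\underline{\R}\oplus\mathrm{An}\,(\tau)\oplus\underline{\R}$ to be glued back into $\mathcal{T}$.
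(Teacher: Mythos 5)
Your proof is correct and follows essentially the same route as the paper: a direct comparison of the two splittings using the identities of Lemma~\ref{27102024D}, in particular $P^{\overline{\tau}}(K_{\tau,\overline{\tau}})=L_{\tau,\overline{\tau}}$ and $\overline{\tau}(K_{\tau,\overline{\tau}})=-\tfrac12 h^{\omega}(L_{\tau,\overline{\tau}},L_{\tau,\overline{\tau}})$. The only (cosmetic) difference is organizational: the paper reads off $\overline{\alpha}=\mathbf{h}(T,\eta^{\overline{\tau}})$ and $\overline{\beta}=\mathbf{h}(T,\xi)$ directly and then solves for $\Phi(\overline{X})$ by subtraction, whereas you substitute $\eta^{\tau}=\eta^{\overline{\tau}}+\Phi(K_{\tau,\overline{\tau}})$ and decompose via $P^{\overline{\tau}}$ and $\Phi(Z^{\omega})=\xi$; both computations are equivalent.
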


\begin{proof}
We aim to construct the natural vector bundle isomorphism $F_{\tau,\overline{\tau}}$ that relates the two decompositions of $\mathcal{T}$. That is, we have to relate $T\overset{\tau}{= }\alpha\xi+\Phi(X)+\beta\eta^{\tau}$ with $T\overset{\overline{\tau}}{= }\overline{\alpha}\xi+\Phi(\bar{X})+\overline{\beta}\eta^{\overline{\tau}}.$  The proof follows from a direct computation.

\begin{enumerate}
    \item Considering both  decompositions of $T$, we have $\beta\overset{\tau}{= }\mathbf{h}(T,\xi)\overset{\overline{\tau}}{= }\bar{\beta}$.
    \item  Analogously, we obtain $\alpha+\mathbf{h}(\Phi(X),\eta^{\overline{\tau}})+\beta\mathbf{h}(\eta^{\tau},\eta^{\overline{\tau}})\overset{\tau}{= }\mathbf{h}(T,\eta^{\overline{\tau}})\overset{\overline{\tau}}{= }\overline{\alpha}$. From (\ref{100125A}), we know that $\mathbf{h}(\Phi(X),\eta^{\overline{\tau}})=\overline{\tau}(X)$, and Lemma \ref{27102024D} gives $\mathbf{h}(\eta^{\tau},\eta^{\overline{\tau}})=\overline{\tau}\left(K_{\tau, \bar{\tau}}\right).$ Finally, Equation $(\ref{05072025A})$ yields $\overline{\tau}\left(K_{\tau, \bar{\tau}}\right)=-\frac{1}{2}h^{\omega}(L_{\tau, \bar{\tau}},L_{\tau, \bar{\tau}}).$
    \item Using again both decompositions of $T$, we find
    \begin{align*}
  \Phi(\bar{X}) & = (\alpha-\overline{\alpha})\xi+\Phi(X)+\beta(\eta^{\tau}-\eta^{\overline{\tau}})=(-\overline{\tau}(X)-\beta\mathbf{h}(\eta^{\tau},\eta^{\overline{\tau}}))\xi+\Phi(X)+\beta(\eta^{\tau}-\eta^{\overline{\tau}}) \\
  & =\Phi(P^{\overline{\tau}}(X))+\beta\Phi(L_{\tau, \bar{\tau}}).
\end{align*}
  \end{enumerate}
\end{proof}

\begin{remark}\label{10072025t}
{\rm The map $F_{\tau,\overline{\tau}}$ is an isometry between the two decompositions. That is, for all $T_1,T_2\in\Gamma(\mathcal{T})$, $$\mathbf{h}\big(F_{\tau,\overline{\tau}}\cdot T_1^{\tau},F_{\tau,\overline{\tau}}\cdot T_2^{\tau}\big)=\mathbf{h}(T_1^{\tau}, T_2^{\tau}).$$}
\end{remark}

We now turn our attention to the expression of the tractor connection under these splittings of the standard tractor bundle, with the goal of identifying the geometric structures encoded within.

\begin{proposition}\label{27102024A}
Let $(\mathcal{T},\mathbf{h},\nabla^{\mathcal{T}}, \xi)$ be the standard tractor bundle constructed from a lightlike Cartan geometry $(p: \mathcal{P}\to N,\omega)$. Given a $1$-form $\tau\in S(N,h^{\omega},Z^{\omega})$, the tractor connection $\nabla^{\mathcal{T}}$ takes the following form in the decomposition associated with $\tau$:
\begin{equation}\label{17102024A}
\nabla^{\mathcal{T}}_W\begin{pmatrix}\alpha \\ X \\  \beta\end{pmatrix}\overset{\tau}{=}\begin{pmatrix}W\big(\alpha\big)+\alpha\,\tau\big(W\big)-h^{\omega}\big(X,D^{\tau}(W)\big) \\ \alpha\,P^{\tau}(W)+\beta\,D^{\tau}(W)+\nabla^{\tau}_W X \\  W\big(\beta\big)-\beta\,\tau\big(W\big)-h^{\omega}\big(X,W\big)\end{pmatrix},
\end{equation}
where $\nabla^{\tau}:\mathfrak{X}(N)\times \Gamma(\mathrm{An}\, (\tau)) \to  \Gamma(\mathrm{An}\, (\tau))$ is a metric linear connection on the vector bundle $\left(\mathrm{An}\, (\tau)\rightarrow N,h^{\omega}\right)$, and $D^{\tau}: TN \to \mathrm{An}\, (\tau)$ is a vector bundle morphism.
\end{proposition}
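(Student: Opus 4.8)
The plan is to compute $\nabla^{\mathcal{T}}_W$ on the three building blocks $\xi$, $\Phi(X)$ with $X\in\Gamma(\mathrm{An}\,(\tau))$, and $\eta^{\tau}$, and then assemble the general section $T\overset{\tau}{=}\alpha\xi+\Phi(X)+\beta\eta^{\tau}$ by the Leibniz rule. Throughout I would use, repeatedly, that $\nabla^{\mathcal{T}}\xi=\Phi$ (Proposition \ref{050125F}), that $\nabla^{\mathcal{T}}$ is metric with respect to $\mathbf{h}$, the inner-product relations $\mathbf{h}(\xi,\xi)=0$, $\mathbf{h}(\xi,\eta^{\tau})=1$, $\mathbf{h}(\eta^{\tau},\eta^{\tau})=0$, $\mathbf{h}(\Phi(V),\Phi(W))=h^{\omega}(V,W)$, $\mathbf{h}(\Phi(V),\xi)=h^{\omega}(V,Z^{\omega})=0$ and $\mathbf{h}(\Phi(V),\eta^{\tau})=\tau(V)$ from $(\ref{100125A})$, together with the identity $\Phi(W)-\tau(W)\,\xi=\Phi(P^{\tau}(W))$, which follows from $\Phi(Z^{\omega})=\xi$. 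A key structural remark is that $\Phi(TN)=\xi^{\perp}$, so a section of $\mathcal{T}$ lies in $\Phi(TN)$ exactly when its $\mathbf{h}$-pairing with $\xi$ vanishes; this is what lets me apply $\Phi^{-1}$ below.

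\textbf{The three blocks.} For $\xi$: since $\nabla^{\mathcal{T}}_W\xi=\Phi(W)=\tau(W)\,\xi+\Phi(P^{\tau}(W))$, reading off components gives $\nabla^{\mathcal{T}}_W\xi\overset{\tau}{=}(\tau(W),\,P^{\tau}(W),\,0)$, which is $(\ref{17102024A})$ for $(\alpha,X,\beta)=(1,0,0)$. For $\eta^{\tau}$: differentiating $\mathbf{h}(\xi,\eta^{\tau})=1$ yields $\mathbf{h}(\xi,\nabla^{\mathcal{T}}_W\eta^{\tau})=-\tau(W)$ and differentiating $\mathbf{h}(\eta^{\tau},\eta^{\tau})=0$ yields $\mathbf{h}(\eta^{\tau},\nabla^{\mathcal{T}}_W\eta^{\tau})=0$, so the $\eta^{\tau}$-component of $\nabla^{\mathcal{T}}_W\eta^{\tau}$ is $-\tau(W)$ and its $\xi$-component is $0$; I then \emph{define} $D^{\tau}(W)$ by $\Phi(D^{\tau}(W)):=\nabla^{\mathcal{T}}_W\eta^{\tau}+\tau(W)\eta^{\tau}$, checking that the right-hand side pairs trivially with $\xi$ (so it is in $\Phi(TN)$) and trivially with $\eta^{\tau}$ (so $D^{\tau}(W)\in\mathrm{An}\,(\tau)$), and noting it is tensorial in $W$. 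For $\Phi(X)$ with $X\in\Gamma(\mathrm{An}\,(\tau))$: differentiating $\mathbf{h}(\Phi(X),\xi)=0$ gives $\mathbf{h}(\nabla^{\mathcal{T}}_W\Phi(X),\xi)=-h^{\omega}(X,W)$, and differentiating $\mathbf{h}(\Phi(X),\eta^{\tau})=\tau(X)=0$, combined with the formula for $\nabla^{\mathcal{T}}_W\eta^{\tau}$ just obtained, gives $\mathbf{h}(\nabla^{\mathcal{T}}_W\Phi(X),\eta^{\tau})=-h^{\omega}(X,D^{\tau}(W))$; I then \emph{define} $\nabla^{\tau}_W X$ by $\Phi(\nabla^{\tau}_W X):=\nabla^{\mathcal{T}}_W\Phi(X)+h^{\omega}(X,D^{\tau}(W))\,\xi+h^{\omega}(X,W)\,\eta^{\tau}$, again verifying the right-hand side lies in $\Phi(TN)$ and that $\nabla^{\tau}_W X\in\mathrm{An}\,(\tau)$.

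\textbf{Connection properties and assembly.} That $\nabla^{\tau}$ is an honest linear connection follows by applying $\nabla^{\mathcal{T}}$ to $\Phi(fX)=f\,\Phi(X)$ and tracking the middle component, which produces the Leibniz term $W(f)X$; metric compatibility $\nabla^{\tau}h^{\omega}=0$ follows by differentiating $\mathbf{h}(\Phi(X),\Phi(Y))=h^{\omega}(X,Y)$ and observing that the $\xi$- and $\eta^{\tau}$-parts of $\nabla^{\mathcal{T}}_W\Phi(X)$, $\nabla^{\mathcal{T}}_W\Phi(Y)$ pair to zero against $\Phi(Y)$, $\Phi(X)$. Finally, for $T\overset{\tau}{=}\alpha\xi+\Phi(X)+\beta\eta^{\tau}$ one expands $\nabla^{\mathcal{T}}_WT=W(\alpha)\xi+\alpha\nabla^{\mathcal{T}}_W\xi+\nabla^{\mathcal{T}}_W\Phi(X)+W(\beta)\eta^{\tau}+\beta\nabla^{\mathcal{T}}_W\eta^{\tau}$, substitutes the three block formulas, and collects the $\xi$-, $\mathrm{An}\,(\tau)$-, and $\eta^{\tau}$-components to recover exactly $(\ref{17102024A})$.

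The genuinely delicate points, and the only places where care is needed, are the two definitions of $D^{\tau}$ and $\nabla^{\tau}$: one must check that the prescribed combinations land in $\Phi(TN)$ so that $\Phi^{-1}$ is legitimate, that their values actually lie in the screen subbundle $\mathrm{An}\,(\tau)$ rather than merely in $TN$, and that $\nabla^{\tau}$ obeys the Leibniz rule (i.e. is a connection, not just a tensor). Everything else is routine bookkeeping driven entirely by metric compatibility of $\nabla^{\mathcal{T}}$ and the characterizing relations of $\xi$ and $\eta^{\tau}$.
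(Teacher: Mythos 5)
Your proposal is correct and follows essentially the same route as the paper: compute $\nabla^{\mathcal{T}}_W$ on the three generators $\xi$, $\eta^{\tau}$, $\Phi(X)$ using metric compatibility of $\nabla^{\mathcal{T}}$ together with the relations $\nabla^{\mathcal{T}}\xi=\Phi$ and $(\ref{100125A})$, define $D^{\tau}$ and $\nabla^{\tau}$ as the resulting middle components, and assemble the general formula by linearity. You are in fact somewhat more explicit than the paper about the verifications it leaves implicit (that the defined quantities land in $\Phi(TN)$ and in $\mathrm{An}\,(\tau)$, and that $\nabla^{\tau}$ obeys the Leibniz rule), which is a welcome addition rather than a deviation.
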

\begin{proof}
From $(\ref{02072025A})$, it follows that
$$
\nabla^{\mathcal{T}}_W\begin{pmatrix}1 \\ 0 \\  0\end{pmatrix}=\begin{pmatrix}\tau(W) \\ P^{\tau}(W) \\  0\end{pmatrix}
$$
for all $W\in\mathfrak{X}(N)$. Since we can write $\nabla^{\mathcal{T}}_W \eta^{\tau}\overset{\tau}{=}\alpha\xi+\Phi(X)+\beta\eta^{\tau}$, with $\alpha,\beta\in\mathcal{C}^{\infty}(N,\R)$ and $X\in\Gamma\left(\mathrm{An}\, (\tau)\right)$, we obtain $\beta\overset{\tau}{=}\mathbf{h}\left(\nabla^{\mathcal{T}}_W \eta^{\tau},\xi\right)=-\mathbf{h}\left(\Phi(W),\eta^{\tau}\right)$. By Equation $(\ref{100125A})$, we conclude that $\beta=-\tau(W)$. Similarly, we compute $\alpha=0$. Hence, we deduce
$$
\nabla^{\mathcal{T}}_W\begin{pmatrix}0 \\ 0 \\  1\end{pmatrix}=\begin{pmatrix}0 \\ D^{\tau}(W)\\  -\tau\big(W\big)\end{pmatrix},
$$
where $D^{\tau}: TN \to \mathrm{An}\, (\tau)$ is necessarily a vector bundle morphism. 
The $\mathcal{C}^{\infty}(N,\R)$-linearity of $D^{\tau}$ follows directly from the linearity of $\nabla^{\mathcal{T}}$. Proceeding similarly, we obtain
$$
\nabla^{\mathcal{T}}_W\begin{pmatrix}0 \\ X \\  0\end{pmatrix}=\begin{pmatrix}-h^{\omega}\big(X,D^{\tau}(W)\big) \\ \nabla^{\tau}_W X \\ -h^{\omega}\big(X,W\big) \end{pmatrix},
$$
for $X\in\Gamma\left(\mathrm{An}\, (\tau)\right)$, where $\nabla^{\tau}:\mathfrak{X}(N)\times \Gamma(\mathrm{An}\, (\tau)) \to  \Gamma(\mathrm{An}\, (\tau))$ is a metric linear connection on the vector bundle $\left(\mathrm{An}\, (\tau)\rightarrow N,h^{\omega}\right)$. The properties of $\nabla^{\tau}$ follow directly from those of $\nabla^{\mathcal{T}}$.
Finally, the above formulas determine the values of $\nabla^{\mathcal{T}}$ on each component of the decomposition of $\mathcal{T}$, and the linearity properties of $\nabla^{\mathcal{T}}$ imply the claimed formula (\ref{17102024A}). 
\end{proof}

\begin{remark}\label{17072025u}
    {\rm As a consequence of Proposition \ref{27102024A}, we obtain 
\begin{equation}\label{03072025A}
R^{\mathcal{T}}(V,W)\xi\overset{\tau}{=}\begin{pmatrix}d\tau (V,W)+\theta^{\tau}(V,W) \\  \nabla^{\tau}_{V}(P^{\tau}(W))-\nabla^{\tau}_{W}(P^{\tau}(V))-P^{\tau}([V,W])-B^{\tau}(V,W)\\  0\end{pmatrix},
\end{equation}
for every $V,W\in\mathfrak{X}(N)$. The $2$-form $\theta^{\tau}\in \Omega^{2}(N ,\R)$ is defined by $\theta^{\tau}(V,W):=h^{\omega}\big(V,D^{\tau}(W))-h^{\omega}\big(W,D^{\tau}(V)\big)$, and the bilinear map $B^{\tau}$ is given by $B^{\tau}(V,W):=\tau(V)W-\tau(W)V$, for all $V,W\in \mathfrak{X}(N)$.}
\end{remark}

\begin{remark}\label{21072025j}
{\rm Recall that a Galilean spacetime $(M, \Omega, g, \nabla)$ is a Leibnizian spacetime $(M, \Omega, g)$ endowed with a Galilean connection $\nabla$ on $M$, see \cite{SB03}. That is, $\nabla$ is a linear connection satisfying $\nabla\Omega=0$ and
$$
V\big(g(X,Y)\big)= g(\nabla_{V}X,Y)+g(X,\nabla_{V}Y),\quad V\in \mathfrak{X}(M), \, X,Y\in\Gamma(\mathrm{An}\,(\Omega)).
$$
Galilean connections are not uniquely determined by the structure $(M, \Omega, g)$. A key result in \cite[Theorem 5.27]{SB03} provides an explicit characterization of the space of such connections. Given a Leibnizian spacetime $(M, \Omega, g)$, let $\mathcal{D}(\Omega, g)$ denote the set of Galilean connections compatible with the structure. Then, for any vector field $Z\in \mathfrak{X}(M)$ such that $\Omega(Z)=1$, the map
$$
\mathcal{D}(\Omega,g)\to\Gamma(\mathrm{An}\,(\Omega))\times\Lambda^{2}(\mathrm{An}\,(\Omega))\times\Lambda^{2}(TM,\mathrm{An}\,(\Omega)),\quad \nabla\mapsto\left( \nabla_{Z}Z,\ \dfrac{1}{2}\mathrm{rot\, Z},\ P^{\Omega}\circ\mathrm{Tor}\right)
$$
is a bijection. Here, $\mathrm{Tor}(V,W):=\nabla_{V}W - \nabla_{W}V-[V,W]$ is the torsion tensor of $\nabla$, and the vorticity along $Z$ is the skew-symmetric tensor defined by $(\mathrm{rot\, Z})(X,Y):=g(\nabla_{X}Z,Y)-g(X,\nabla_{Y}Z)$ for $ X,Y\in \Gamma (\mathrm{An}\,(\Omega))$. The vector field $\nabla_Z Z$ is called the gravitational field of $\nabla$. Furthermore, as shown in \cite{SB03}, every Galilean connection $\nabla$ satisfies the identity $$\Omega\circ \mathrm{Tor}=d\Omega. $$

Now, in our setting, for each $\tau\in S(N,h^{\omega},Z^{\omega})$, we can extend the connection $\nabla^{\tau}$ to a Galilean connection $\widetilde{\nabla}^{\tau}$ on $(N,\tau,h^{\omega})$, given by 
\begin{equation}\label{150825C}
    \widetilde{\nabla}^{\tau}_V W:=V\big(\tau(W)\big)Z^{\omega}+\tau(W)P^{\tau}(V)+\nabla^{\tau}_V \big(P^{\tau}(W)\big).
\end{equation}
Let us note that the decomposition (\ref{17102024A}) can be written as
$$
\nabla^{\mathcal{T}}_{V}\Phi(W)\overset{\tau}{=}\Phi\Big(\widetilde{\nabla}^{\tau}_{V}W+\tau(V)\tau(W)Z^{\omega}-h^{\omega}(D^{\tau}(V),W)Z^{\omega}\Big)- h^{\omega}(V,W)\eta^{\tau}.
$$
The connection $\widetilde{\nabla}^{\tau}$ has vanishing gravitational field with respect to $Z^{\omega}$ and zero vorticity along $Z^{\omega}$. On the other hand, its torsion satisfies 
$$
  \big(P^{\tau}\circ\mathrm{\widetilde{Tor}^{\tau}}\big)(V,W)=\nabla^{\tau}_{V}(P^{\tau}(W))-\nabla^{\tau}_{W}(P^{\tau}(V))-P^{\tau}([V,W])-B^{\tau}(V,W),
$$
where $B^{\tau}$ is defined in Remark \ref{17072025u}. Consequently, expression $(\ref{03072025A})$ becomes
\begin{equation}\label{150825A}
R^{\mathcal{T}}(V,W)\xi\overset{\tau}{=}\begin{pmatrix}\big(\tau\circ\mathrm{\widetilde{Tor}^{\tau}}\big)(V,W)+\theta^{\tau}(V,W) \\ \big(P^{\tau}\circ\mathrm{\widetilde{Tor}^{\tau}}\big)(V,W) \\  0\end{pmatrix}.
\end{equation}
Note that it is enough to determine $P^{\tau}\circ\mathrm{\widetilde{Tor}^{\tau}}$ to uniquely determine the connection $\widetilde{\nabla}^{\tau}$, and therefore also uniquely determines the connection $\nabla^{\tau}$ since
$$\nabla^{\tau}_{V}X=\widetilde{\nabla}^{\tau}_{V}X,\quad X\in \Gamma(\mathrm{An}\, (\tau)),\,\, V\in \mathfrak{X}(N).$$ }
\end{remark}

From Proposition \ref{27102024A}, each $\tau\in S(N,h^{\omega},Z^{\omega})$ determines, via the tractor connection $\nabla^{\mathcal{T}}$, a linear connection $\nabla^{\tau}$ on the subbundle $\mathrm{An}\,(\tau)\rightarrow N$ that is compatible with the restriction of the metric $h^{\omega}$ to $\mathrm{An}\,(\tau)$, as well as a vector bundle morphism $D^{\tau}: TN \to \mathrm{An}\, (\tau)$. The following result provides explicit formulas relating the connections $\nabla^{\tau},\nabla^{\overline{\tau}}$ and the morphisms $D^{\tau},D^{\overline{\tau}}$ associated with two different choices $\tau,\overline{\tau}\in S(N,h^{\omega},Z^{\omega})$.

\begin{lemma}\label{27102024U}
Let $(\mathcal{T}, \mathbf{h},\nabla^{\mathcal{T}}, \xi)$ be the standard tractor bundle  constructed from a lightlike Cartan geometry $(p: \mathcal{P}\to N,\omega)$, and let $\tau,\overline{\tau}\in S(N,h^{\omega},Z^{\omega})$. Then, for any $W\in\mathfrak{X}(N)$ and $X\in\Gamma\left(\mathrm{An}\, (\tau)\right),$ we have:
\begin{enumerate}

\item[$(1)$] $\nabla^{\overline{\tau}}_W (P^{\overline{\tau}}\left(X\right)) = P^{\overline{\tau}}\Big(\nabla^{\tau}_W X-\overline{\tau}(X)W\Big)-h^{\omega}(X,W)L_{\tau, \bar{\tau}}.$

\item[$(2)$] $D^{\overline{\tau}}(W)  = P^{\overline{\tau}}\Big(D^{\tau}(W)+\dfrac{1}{2}h^{\omega}(L_{\tau, \bar{\tau}},L_{\tau, \bar{\tau}})W\Big)-\tau(W)L_{\tau, \bar{\tau}}- \nabla^{\overline{\tau}}_W L_{\tau, \bar{\tau}}.$
\end{enumerate}
\end{lemma}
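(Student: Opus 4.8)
The plan is to exploit the one fact that makes everything automatic: by Proposition~\ref{17102024D} the transition map $F_{\tau,\overline{\tau}}$ of (\ref{27102024R}) intertwines the $\tau$- and $\overline{\tau}$-coordinate descriptions of \emph{every} section of $\mathcal{T}$, and in particular of $\nabla^{\mathcal{T}}_W T$ for each fixed $T\in\Gamma(\mathcal{T})$ and $W\in\mathfrak{X}(N)$. Applying that Proposition to the section $\nabla^{\mathcal{T}}_W T$ gives the compatibility relation
\begin{equation*}
\bigl(\nabla^{\mathcal{T}}_W T\bigr)^{\overline{\tau}}=F_{\tau,\overline{\tau}}\cdot\bigl(\nabla^{\mathcal{T}}_W T\bigr)^{\tau},
\end{equation*}
and both sides can be computed independently from the explicit formula (\ref{17102024A}) of Proposition~\ref{27102024A}: the left side in the $\overline{\tau}$-splitting applied to $T^{\overline{\tau}}=F_{\tau,\overline{\tau}}\cdot T^{\tau}$, the right side in the $\tau$-splitting applied to $T^{\tau}$. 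Feeding two well-chosen test sections into this identity and reading off the middle, $\mathrm{An}\,(\overline{\tau})$-valued, component will produce exactly the two asserted formulas; the only nonformal ingredient is the scalar $\overline{\tau}(K_{\tau,\overline{\tau}})=-\tfrac12 h^{\omega}(L_{\tau,\overline{\tau}},L_{\tau,\overline{\tau}})$ from (\ref{05072025A}) (already built into the top--right entry of $F_{\tau,\overline{\tau}}$), together with $h^{\omega}(Z^{\omega},\cdot)=0$ and the $\mathcal{C}^{\infty}(N,\R)$-linearity of $P^{\overline{\tau}}$.

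For part~(1) I would take $T:=\Phi(X)$ with $X\in\Gamma(\mathrm{An}\,(\tau))$. Using (\ref{100125A}), the isometry property of $\Phi$, and $\Phi(Z^{\omega})=\xi$, its $\tau$-column is $(0,X,0)^{t}$, so by Proposition~\ref{17102024D} its $\overline{\tau}$-column is $F_{\tau,\overline{\tau}}\cdot(0,X,0)^{t}=(\overline{\tau}(X),\,P^{\overline{\tau}}(X),\,0)^{t}$. Computing $\nabla^{\mathcal{T}}_W T$ from (\ref{17102024A}) in each splitting and equating through $F_{\tau,\overline{\tau}}$, the $\mathrm{An}\,(\overline{\tau})$-component reads
\begin{equation*}
\overline{\tau}(X)\,P^{\overline{\tau}}(W)+\nabla^{\overline{\tau}}_W\bigl(P^{\overline{\tau}}(X)\bigr)=P^{\overline{\tau}}\bigl(\nabla^{\tau}_W X\bigr)-h^{\omega}(X,W)\,L_{\tau,\overline{\tau}},
\end{equation*}
and absorbing the first left-hand term as $-P^{\overline{\tau}}\bigl(\overline{\tau}(X)\,W\bigr)$ yields~(1). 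I would check the top and bottom components of the same vector identity only as a consistency test; after using $h^{\omega}(Z^{\omega},\cdot)=0$ and Lemma~\ref{27102024D}(2)--(3) they should hold identically and carry no new information.

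For part~(2) I would repeat the computation with $T:=\eta^{\tau}$, whose $\tau$-column is $(0,0,1)^{t}$ by the defining conditions of $\eta^{\tau}$; Proposition~\ref{17102024D} then gives its $\overline{\tau}$-column as $F_{\tau,\overline{\tau}}\cdot(0,0,1)^{t}=\bigl(-\tfrac12 h^{\omega}(L_{\tau,\overline{\tau}},L_{\tau,\overline{\tau}}),\,L_{\tau,\overline{\tau}},\,1\bigr)^{t}$, where I use $L_{\tau,\overline{\tau}}\in\Gamma(\mathrm{An}\,(\overline{\tau}))$. From (\ref{17102024A}) one has $(\nabla^{\mathcal{T}}_W\eta^{\tau})^{\tau}=(0,\,D^{\tau}(W),\,-\tau(W))^{t}$. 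Applying $F_{\tau,\overline{\tau}}$ to this column and, separately, evaluating (\ref{17102024A}) in the $\overline{\tau}$-splitting on the $\overline{\tau}$-column above, then equating the $\mathrm{An}\,(\overline{\tau})$-components and solving for $D^{\overline{\tau}}(W)$, gives~(2) after once more using linearity of $P^{\overline{\tau}}$ to rewrite $\tfrac12 h^{\omega}(L_{\tau,\overline{\tau}},L_{\tau,\overline{\tau}})\,P^{\overline{\tau}}(W)=P^{\overline{\tau}}\bigl(\tfrac12 h^{\omega}(L_{\tau,\overline{\tau}},L_{\tau,\overline{\tau}})\,W\bigr)$.

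I do not foresee a genuine conceptual obstacle: once the intertwining relation $(\nabla^{\mathcal{T}}_W T)^{\overline{\tau}}=F_{\tau,\overline{\tau}}(\nabla^{\mathcal{T}}_W T)^{\tau}$ is written down, the formulas are forced. The real work is bookkeeping — propagating the off-diagonal entries of $F_{\tau,\overline{\tau}}$ correctly (above all the $-\tfrac12 h^{\omega}(L_{\tau,\overline{\tau}},L_{\tau,\overline{\tau}})$ term, the point where Lemma~\ref{27102024D} and (\ref{05072025A}) enter) and keeping straight which objects live in $\mathrm{An}\,(\tau)$ and which in $\mathrm{An}\,(\overline{\tau})$. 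I would also note that the metric compatibility of $\nabla^{\overline{\tau}}$ need not be re-checked, since it is already part of the conclusion of Proposition~\ref{27102024A}.
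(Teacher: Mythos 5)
Your proposal is correct and follows essentially the same route as the paper: both rest on the intertwining identity $F_{\tau,\overline{\tau}}\cdot\nabla^{\mathcal{T}_{\tau}}_W T^{\tau}=\nabla^{\mathcal{T}_{\overline{\tau}}}_W\big(F_{\tau,\overline{\tau}}\cdot T^{\tau}\big)$ evaluated on the test sections $\Phi(X)$ and $\eta^{\tau}$, with the $\mathrm{An}\,(\overline{\tau})$-component yielding (1) and (2) respectively. The paper also records the top-component identities as by-products (its Equations for $h^{\omega}(X,D^{\overline{\tau}}(W))$ and $\tau(D^{\overline{\tau}}(W))+\overline{\tau}(D^{\tau}(W))$), which is exactly the consistency check you describe.
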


\begin{proof} Let us denote by $\nabla^{\mathcal{T}_{\tau}}$ and $\nabla^{\mathcal{T}_{\overline{\tau}}}$ the decompositions of the tractor connection $\nabla^{\mathcal{T}}$ with respect to $\tau$ and $\overline{\tau}$, respectively, as in (\ref{17102024A}).
Then, for every section $T\overset{\tau}{= }\alpha\xi+\Phi(X)+\beta\eta^{\tau}$, the following identity holds: 
\begin{equation}\label{070325B}
F_{\tau,\overline{\tau}}\cdot\nabla^{\mathcal{T}_{\tau}}_W\begin{pmatrix}\alpha \\ X \\  \beta\end{pmatrix}=\nabla^{\mathcal{T}_{\overline{\tau}}}_W \left(F_{\tau,\overline{\tau}}\cdot\begin{pmatrix}\alpha \\ X \\  \beta\end{pmatrix}\right),
\end{equation}
where $F_{\tau,\overline{\tau}}$ is the transformation defined in $(\ref{27102024R})$. For $T\overset{\tau}{= }\xi$, identity (\ref{070325B}) yields no non-trivial information.
 For $T\overset{\tau}{= }\Phi(X)$, Identity (\ref{070325B}) is equivalent to 
$$
  \nabla^{\overline{\tau}}_W (P^{\overline{\tau}}\left(X\right)) = P^{\overline{\tau}}\Big(\nabla^{\tau}_W X-\overline{\tau}(X)W\Big)-h^{\omega}(X,W)L_{\tau, \bar{\tau}}
$$
and 
\begin{align}\label{07072025A}
  h^{\omega}(X,D^{\overline{\tau}}(W)) & = h^{\omega}(X,D^{\tau}(W))+\overline{\tau}(X)\overline{\tau}(W)-\frac{1}{2}h^{\omega}(L_{\tau, \bar{\tau}},L_{\tau, \bar{\tau}})h^{\omega}(X,W)\notag \\
  & -h^{\omega}\big(X,\nabla^{\tau}_W \big(P^{\tau}(L_{\tau,\overline{\tau}})\big)\big).
\end{align}
Moreover, for $T\overset{\tau}{= }\eta^{\tau}$,  Identity (\ref{070325B}) is equivalent to  
$$
D^{\overline{\tau}}(W)  = P^{\overline{\tau}}\Big(D^{\tau}(W)+\frac{1}{2}h^{\omega}(L_{\tau, \bar{\tau}},L_{\tau, \bar{\tau}})W\Big)-\tau(W)L_{\tau, \bar{\tau}}- \nabla^{\overline{\tau}}_W L_{\tau, \bar{\tau}}
$$
and
\begin{equation}\label{07072025B}
\tau\big(D^{\overline{\tau}}(W)\big)+\overline{\tau}\big(D^{\tau}(W)\big)=-\frac{1}{2}W\big(h^{\omega}(L_{\tau, \bar{\tau}},L_{\tau, \bar{\tau}})\big)-\frac{1}{2}\big(\tau(W)+\overline{\tau}(W)\big)h^{\omega}(L_{\tau, \bar{\tau}},L_{\tau, \bar{\tau}}).
\end{equation}
Equations (\ref{07072025A}) and (\ref{07072025B}) can be derived by direct computation from $(1)$, $(2)$, and the metric compatibility of $\nabla^{\tau}$ with $(\mathrm{An}\,(\tau)\rightarrow N, h^{\omega})$.
\end{proof}

The next result reveals the underlying geometric structure of a Cartan geometry modeled on the future lightlike cone in Lorentz-Minkowski spacetime.
\begin{theorem}\label{10072025A}
    Let $(p: \mathcal{P}\to N,\omega)$ be a lightlike Cartan geometry. Then, the manifold $N$ is endowed with the following structures:
    \begin{enumerate}
        \item[$(1)$] A lightlike metric $h^{\omega}$ and a vector field $Z^{\omega}$ spanning its radical distribution. 
        \item[$(2)$] A map $\nabla^{\omega}$ that assigns to each $\tau \in S(N,h^{\omega},Z^{\omega})$ a metric linear connection $\nabla^{\tau}$ on the vector bundle $(\mathrm{An}\, (\tau)\to N,h^{\omega})$, such that for any $\tau, \bar{\tau}\in S(N,h^{\omega},Z^{\omega})$ and any section $X\in\Gamma\left( \mathrm{An}\, (\tau)\right)$, the following identity holds:
    \begin{equation}\label{16072025J}
  \nabla^{\overline{\tau}}_W (P^{\overline{\tau}}\left(X\right)) = P^{\overline{\tau}}\Big(\nabla^{\tau}_W X-\overline{\tau}(X)W\Big)-h^{\omega}(X,W)L_{\tau, \bar{\tau}}.
   \end{equation}
         \item[$(3)$] A map $D^{\omega}$ that assigns to each $\tau \in S(N,h^{\omega},Z^{\omega})$ a vector bundle morphism $D^{\tau}: TN \to \mathrm{An}\, (\tau)$, such that for any $\tau, \bar{\tau}\in S(N,h^{\omega},Z^{\omega})$, the following identity holds:
    \begin{equation}\label{16072025P}
        D^{\overline{\tau}}(W)  = P^{\overline{\tau}}\Big(D^{\tau}(W)+\frac{1}{2}h^{\omega}(L_{\tau, \bar{\tau}},L_{\tau, \bar{\tau}})W\Big)-\tau(W)L_{\tau, \bar{\tau}}- \nabla^{\overline{\tau}}_W L_{\tau, \bar{\tau}}.
    \end{equation}
    \end{enumerate}   
\end{theorem}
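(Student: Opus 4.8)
The plan is to deduce the theorem by collecting the three ingredients already prepared in Sections \ref{27102024y} and \ref{27102024p}: the pair $(h^{\omega},Z^{\omega})$ from \cite[Theorem 4.4]{PacoLuz}, the connections $\nabla^{\tau}$ and morphisms $D^{\tau}$ from Proposition \ref{27102024A}, and the change-of-splitting formulas from Lemma \ref{27102024U}. The only genuine work is to check that everything produced is canonically attached to the Cartan geometry, so that the assignments $\tau\mapsto\nabla^{\tau}$ and $\tau\mapsto D^{\tau}$ are well-defined maps on $S(N,h^{\omega},Z^{\omega})$.

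First I would apply \cite[Theorem 4.4]{PacoLuz} to obtain item (1): the lightlike metric $h^{\omega}$, the radical generator $Z^{\omega}$, and the identification of $\mathcal{P}$ with the bundle $\mathcal{Q}^{\omega}$ of admissible linear frames, as recalled in Remark \ref{281224A}. In particular $S(N,h^{\omega},Z^{\omega})$ is then defined, and it is nonempty by the partition-of-unity argument noted in Subsection \ref{27102024p}. Next I would form the standard tractor bundle $(\mathcal{T},\mathbf{h},\nabla^{\mathcal{T}},\xi)$ of Definition \ref{23102024A} and Proposition \ref{050125F}, which is determined by $(p:\mathcal{P}\to N,\omega)$ alone. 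For each $\tau\in S(N,h^{\omega},Z^{\omega})$ the section $\eta^{\tau}\in\Gamma(\mathcal{T})$ is \emph{uniquely} characterised by the normalisation $\mathbf{h}(\xi,\eta^{\tau})=1$, $\mathbf{h}(\Phi(\mathrm{An}\,(\tau)),\eta^{\tau})=0$, so the $\tau$-decomposition (\ref{01072025A}) is itself canonical. Applying Proposition \ref{27102024A} to this decomposition yields the metric linear connection $\nabla^{\tau}$ on $(\mathrm{An}\,(\tau)\to N,h^{\omega})$ and the vector bundle morphism $D^{\tau}\colon TN\to\mathrm{An}\,(\tau)$ appearing in (\ref{17102024A}); since the entries of (\ref{17102024A}) are pinned down uniquely by $\nabla^{\mathcal{T}}$ and the chosen splitting, I would simply set $\nabla^{\omega}(\tau):=\nabla^{\tau}$ and $D^{\omega}(\tau):=D^{\tau}$, which gives items (2) and (3) except for the transformation laws.

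Finally, identities (\ref{16072025J}) and (\ref{16072025P}) are exactly statements (1) and (2) of Lemma \ref{27102024U}, obtained by imposing the compatibility $F_{\tau,\overline{\tau}}\cdot\nabla^{\mathcal{T}_{\tau}}_{W}=\nabla^{\mathcal{T}_{\overline{\tau}}}_{W}\cdot F_{\tau,\overline{\tau}}$ for the transition map $F_{\tau,\overline{\tau}}$ of Proposition \ref{17102024D} and reading off components on the sections $\xi$, $\Phi(X)$, $\eta^{\tau}$. So no deep obstacle is expected: the step requiring the most care is merely the bookkeeping verification that the objects $\nabla^{\tau}$, $D^{\tau}$, $L_{\tau,\overline{\tau}}$, $P^{\overline{\tau}}$ entering Lemma \ref{27102024U} are precisely those fixed in the previous paragraph, so that the lemma applies verbatim and both transformation laws hold. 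The recasting in terms of Galilean connections, Remark \ref{21072025j}, then yields the equivalent formulation of Theorem \ref{19082025hfjrfhgfujfg}, but is not needed for the present statement.
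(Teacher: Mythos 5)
Your proposal is correct and follows exactly the route the paper intends: the theorem is stated there without a separate proof precisely because it is the assembly of \cite[Theorem 4.4]{PacoLuz} for item $(1)$, Proposition \ref{27102024A} for the existence of $\nabla^{\tau}$ and $D^{\tau}$ in each $\tau$-splitting, and Lemma \ref{27102024U} for the transformation laws $(\ref{16072025J})$ and $(\ref{16072025P})$. Your added care in checking that the $\tau$-decomposition is canonical (via the unique normalisation of $\eta^{\tau}$) is exactly the right bookkeeping and matches the construction in Subsection \ref{27102024p}.
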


\begin{remark}\label{21072025oki}
{\rm Let us note that part of the map $\nabla^{\omega}$ is entirely determined by the intrinsic structure of the lightlike manifold $(N,h^{\omega},Z^{\omega})$. This follows from Identity $(\ref{16072025D})$, since
$$
2h^{\omega}\big(A_{Z^{\omega}}[V],[W]\big)=\big(\mathcal{L}_{Z^{\omega}} h^{\omega}\big)([V],[W])=Z^{\omega}\big(h^{\omega}(V,W)\big)-h^{\omega}([Z^{\omega},V],W)-h^{\omega}([Z^{\omega},W],V),
$$
for all $V,W\in\mathfrak{X}(N)$. Therefore, for each $\tau\in S(N,h^{\omega},Z^{\omega})$, we have
\begin{equation}\label{21072025mjh}
2h^{\omega}\big(A_{Z^{\omega}}[V],[W]\big)=h^{\omega}\big(\nabla^{\tau}_{Z^{\omega}}(P^{\tau}(V))-[Z^{\omega},V],W\big)+h^{\omega}\big(\nabla^{\tau}_{Z^{\omega}}(P^{\tau}(W))-[Z^{\omega},W],V\big).
\end{equation}
Now, define the endomorphism $J([V]):=\big[\nabla^{\tau}_{Z^{\omega}}\big(P^{\tau}(V)\big)-[Z^{\omega},V]\big]$. By Identity $(\ref{16072025J})$, the map $J$ does not depend on the choice of $\tau$. Next, consider the symmetric part of $J$, defined by
$$
h^{\omega}(J_{\mathrm{sym}}([V]),[W])=\dfrac{1}{2}\big(h^{\omega}(J([V]),[W])+h^{\omega}([V],J([W]))\big)=h^{\omega}\big(A_{Z^{\omega}}[V],[W]\big).
$$
Therefore, we conclude that $J_{\mathrm{sym}} = A_{Z^{\omega}}$.}
\end{remark}

\begin{remark}\label{130825A}
    {\rm A particularly useful feature for constructing examples is that the maps $\nabla^{\omega}$ and $D^{\omega}$ are completely determined by their value on a single $1$-form, as a consequence of Equations $(\ref{16072025J})$ and $(\ref{16072025P})$. Fix $\alpha \in S(N,h^{\omega},Z^{\omega})$ and assign to it a metric linear connection $\nabla^{\alpha}$ on $(\mathrm{An}\, (\alpha)\to N,h^{\omega})$ together with a vector bundle morphism $D^{\alpha}: TN \to \mathrm{An}\, (\alpha)$. For each $\tau \in S(N,h^{\omega},Z^{\omega})$, define the metric linear connection $\nabla^{\tau}$ on $(\mathrm{An}\, (\tau)\to N,h^{\omega})$ by 
$$
      \nabla^{\tau}_W (P^{\tau}\left(X\right)) = P^{\tau}\Big(\nabla^{\alpha}_W X-\tau(X)W\Big)-h^{\omega}(X,W)L_{\alpha, \tau},
$$
where $X\in\Gamma\left( \mathrm{An}\, (\alpha)\right)$. It is important to note that $\left.P^{\tau}\right|_{\Gamma\left(\mathrm{An}\, (\alpha)\right)}$ is an isomorphism onto $\Gamma\left(\mathrm{An}\, (\tau)\right)$. Similarly, define the vector bundle morphism $D^{\tau}: TN \to \mathrm{An}\, (\tau)$ by
$$
         D^{\tau}(W)  = P^{\tau}\Big(D^{\alpha}(W)+\frac{1}{2}h^{\omega}(L_{\alpha, \tau},L_{\alpha, \tau})W\Big)-\alpha(W)L_{\alpha, \tau}- \nabla^{\tau}_W L_{\alpha, \tau}.
$$
Then, for every \(\tau,\overline{\tau} \in S(N,h^{\omega},Z^{\omega})\), Equations $(\ref{16072025J})$ and $(\ref{16072025P})$ hold. In fact, for every $X\in\Gamma\left( \mathrm{An}\, (\tau)\right)$ we have
\begin{align*}  
  \nabla^{\overline{\tau}}_W\big(P^{\overline{\tau}}(X)\big)  &= P^{\overline{\tau}}\big(\nabla^{\alpha}_W\big(P^{\alpha}(X)\big)-\overline{\tau}(P^{\alpha}(X))W\big)-h^{\omega}(X,W)L_{\alpha,\overline{\tau}}  \\
    &= P^{\overline{\tau}}\big(\nabla^{\tau}_WX-\overline{\tau}(X)W\big)+h^{\omega}(X,W)\big(P^{\overline{\tau}}\big(L_{\alpha,\tau}\big)-L_{\alpha,\overline{\tau}}\big).
\end{align*}
Therefore, Equation $(\ref{16072025J})$ holds if and only if
$$
    P^{\overline{\tau}}(L_{\alpha, \tau}) - L_{\alpha, \overline{\tau}} = -L_{\tau, \overline{\tau}}
$$
and this identity follows directly from Equation (\ref{090825D}). To verify that  $(\ref{16072025P})$ holds, it suffices to perform the following calculation:
\begin{align*}  
    D^{\overline{\tau}}(W)&=P^{\overline{\tau}}\Big(D^{\alpha}(W)+\frac{1}{2}h^{\omega}(L_{\alpha, \overline{\tau}},L_{\alpha, \overline{\tau}})W\Big)-\alpha(W)L_{\alpha, \overline{\tau}}- \nabla^{\overline{\tau}}_W L_{\alpha, \overline{\tau}} \\
    &=P^{\overline{\tau}}\Big(D^{\tau}(W)-\frac{1}{2}h^{\omega}(L_{\alpha, \tau},L_{\alpha, \tau})W+\alpha(W)L_{\alpha,\tau}+\nabla^{\tau}_W L_{\alpha,\tau}+\frac{1}{2}h^{\omega}(L_{\alpha, \overline{\tau}},L_{\alpha, \overline{\tau}})W\Big)\\
    & -\alpha(W)L_{\alpha,\overline{\tau}}-\nabla^{\overline{\tau}}_W L_{\alpha,\overline{\tau}}.
\end{align*}
From Equation (\ref{090825D}) we have
\begin{align*}  
    \nabla^{\overline{\tau}}_W L_{\alpha,\overline{\tau}}&= \nabla^{\overline{\tau}}_W\big(P^{\overline{\tau}}(L_{\alpha,\tau})\big)+\nabla^{\overline{\tau}}_W L_{\tau,\overline{\tau}}\\
    &=P^{\overline{\tau}}\big(\nabla^{\tau}_W L_{\alpha,\tau}-\overline{\tau}(L_{\alpha,\tau})W\big)-h^{\omega}(L_{\alpha,\tau},W)L_{\tau,\overline{\tau}}+\nabla^{\overline{\tau}}_W L_{\tau,\overline{\tau}}.
\end{align*}
Combining the last two equations, we obtain
\begin{align*}  
    D^{\overline{\tau}}(W)&=P^{\overline{\tau}}\big(D^{\tau}(W)\big)-\nabla^{\overline{\tau}}_W L_{\tau,\overline{\tau}}+h^{\omega}(L_{\alpha,\tau},W)L_{\tau,\overline{\tau}}-\alpha(W)L_{\alpha,\overline{\tau}} \\
    &+P^{\overline{\tau}}\Big(-\frac{1}{2}h^{\omega}(L_{\alpha, \tau},L_{\alpha, \tau})W+\alpha(W)L_{\alpha,\tau}+\frac{1}{2}h^{\omega}(L_{\alpha, \overline{\tau}},L_{\alpha, \overline{\tau}})W+\overline{\tau}(L_{\alpha,\tau})W\Big).
\end{align*}
Therefore, Equation $(\ref{16072025P})$ follows directly from (\ref{090825D}) and Lemma \ref{27102024D}. }
\end{remark}

Lemma \ref{27102024U} can equivalently be reformulated in terms of the Galilean connections $\widetilde{\nabla}^{\tau}$ introduced in Equation $(\ref{150825C})$, as follows.
\begin{lemma}\label{080825A}
Let $(\mathcal{T}, \mathbf{h},\nabla^{\mathcal{T}}, \xi)$ be the standard tractor bundle  constructed from a lightlike Cartan geometry $(p: \mathcal{P}\to N,\omega)$, and let $\tau,\overline{\tau}\in S(N,h^{\omega},Z^{\omega})$. Then, for any $V, W\in\mathfrak{X}(N)$ we have:
\begin{enumerate}

\item[$(1)$] $\widetilde{\nabla}^{\overline{\tau}}_V W = P^{\overline{\tau}}\Big(\widetilde{\nabla}^{\tau}_V W\Big)+V(\bar{\tau}(W))Z^{\omega}-h^{\omega}(V,W)L_{\tau, \bar{\tau}}.$

\item[$(2)$] $D^{\overline{\tau}}(W)  = P^{\overline{\tau}}\Big(D^{\tau}(W)+\dfrac{1}{2}h^{\omega}(L_{\tau, \bar{\tau}},L_{\tau, \bar{\tau}})W\Big)-\tau(W)L_{\tau, \bar{\tau}}- \widetilde{\nabla}^{\overline{\tau}}_W L_{\tau, \bar{\tau}}.$
\end{enumerate}
\end{lemma}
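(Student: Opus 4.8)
The plan is to deduce both identities directly from Lemma~\ref{27102024U}, the defining formula~(\ref{150825C}) for the Galilean connections $\widetilde{\nabla}^{\tau}$, and the elementary observation recorded at the end of Remark~\ref{21072025j} that $\nabla^{\tau}_{V}X=\widetilde{\nabla}^{\tau}_{V}X$ whenever $X\in\Gamma(\mathrm{An}\,(\tau))$. No new geometric input is required: the content is a rearrangement that carefully tracks the two spacelike projections $P^{\tau}$ and $P^{\overline{\tau}}$. Before starting I would record the projection identities to be used repeatedly, namely $P^{\overline{\tau}}\circ P^{\tau}=P^{\overline{\tau}}$ (hence $P^{\overline{\tau}}(P^{\tau}(W))=P^{\overline{\tau}}(W)$), $P^{\overline{\tau}}(Z^{\omega})=0$, $\overline{\tau}(P^{\tau}(W))=\overline{\tau}(W)-\tau(W)$, and $h^{\omega}(P^{\tau}(W),V)=h^{\omega}(W,V)$, all immediate from $\overline{\tau}(Z^{\omega})=1$ and $h^{\omega}(Z^{\omega},\cdot)=0$.

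For part~(1), the key move is to apply Lemma~\ref{27102024U}(1) to the section $X:=P^{\tau}(W)\in\Gamma(\mathrm{An}\,(\tau))$, with $V$ in the vector-field slot. Since $P^{\overline{\tau}}(X)=P^{\overline{\tau}}(W)$, this expresses $\nabla^{\overline{\tau}}_{V}\bigl(P^{\overline{\tau}}(W)\bigr)$ in terms of $\nabla^{\tau}_{V}\bigl(P^{\tau}(W)\bigr)$, $\overline{\tau}(W)-\tau(W)$, and $L_{\tau,\overline{\tau}}$. Substituting this into the definition~(\ref{150825C}) of $\widetilde{\nabla}^{\overline{\tau}}_{V}W$ and collecting the $P^{\overline{\tau}}(V)$-terms, the coefficient $\overline{\tau}(W)$ coming from~(\ref{150825C}) combines with $-\bigl(\overline{\tau}(W)-\tau(W)\bigr)$ to leave exactly $\tau(W)$. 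On the other side, applying $P^{\overline{\tau}}$ termwise to~(\ref{150825C}) gives $P^{\overline{\tau}}\bigl(\widetilde{\nabla}^{\tau}_{V}W\bigr)=\tau(W)\,P^{\overline{\tau}}(V)+P^{\overline{\tau}}\bigl(\nabla^{\tau}_{V}(P^{\tau}(W))\bigr)$, since $P^{\overline{\tau}}(Z^{\omega})=0$ kills the derivative term and $P^{\overline{\tau}}(P^{\tau}(V))=P^{\overline{\tau}}(V)$. Comparing the two computations yields precisely $\widetilde{\nabla}^{\overline{\tau}}_{V}W=P^{\overline{\tau}}\bigl(\widetilde{\nabla}^{\tau}_{V}W\bigr)+V(\overline{\tau}(W))Z^{\omega}-h^{\omega}(V,W)L_{\tau,\overline{\tau}}$.

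For part~(2), the point is that $L_{\tau,\overline{\tau}}\in\Gamma(\mathrm{An}\,(\overline{\tau}))$: indeed $\overline{\tau}(L_{\tau,\overline{\tau}})=\sum_{i}\tau(E_{i})\,\overline{\tau}(E_{i})=0$ because the $E_{i}$ span $\mathrm{An}\,(\overline{\tau})$. Hence $P^{\overline{\tau}}(L_{\tau,\overline{\tau}})=L_{\tau,\overline{\tau}}$, and by the identity from Remark~\ref{21072025j} one has $\widetilde{\nabla}^{\overline{\tau}}_{W}L_{\tau,\overline{\tau}}=\nabla^{\overline{\tau}}_{W}L_{\tau,\overline{\tau}}$. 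Thus the formula of part~(2) is literally Lemma~\ref{27102024U}(2) with $\nabla^{\overline{\tau}}$ replaced by $\widetilde{\nabla}^{\overline{\tau}}$ on this particular section, so nothing further is needed.

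I do not anticipate a genuine obstacle; the only step requiring care is the coefficient bookkeeping in part~(1), where one must not lose track of which projection is being applied and must correctly invoke $\overline{\tau}(Z^{\omega})=1$ and $h^{\omega}(Z^{\omega},\cdot)=0$ when simplifying. A useful consistency check is that taking $\tau=\overline{\tau}$ makes $L_{\tau,\overline{\tau}}=0$ and reduces part~(1) to the tautology $\widetilde{\nabla}^{\tau}_{V}W=P^{\tau}\bigl(\widetilde{\nabla}^{\tau}_{V}W\bigr)+V(\tau(W))Z^{\omega}$, which confirms the signs.
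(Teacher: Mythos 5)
Your proposal is correct and follows essentially the same route as the paper, whose proof of this lemma is simply the remark that it is a straightforward computation from Lemma~\ref{27102024U}; your write-up supplies exactly the bookkeeping (the substitution $X=P^{\tau}(W)$ in Lemma~\ref{27102024U}(1), the projection identities, and the observation that $L_{\tau,\overline{\tau}}\in\Gamma(\mathrm{An}\,(\overline{\tau}))$ so that $\widetilde{\nabla}^{\overline{\tau}}_{W}L_{\tau,\overline{\tau}}=\nabla^{\overline{\tau}}_{W}L_{\tau,\overline{\tau}}$) that the paper leaves implicit. All the individual steps check out, including the $\tau=\overline{\tau}$ consistency check.
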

\begin{proof}
The proof is a straightforward computation from Lemma \ref{27102024U}. 
\end{proof}

\begin{remark}
{\rm For every $\tau \in S(N, h^{\omega}, Z^{\omega})$, Equation $(\ref{150825A})$ yields
\begin{equation}\label{070825A}
    \mathbf{T}^{\omega}(V,W)=(P^{\tau}\circ\mathrm{\widetilde{Tor}^{\tau}})(V,W)+(d\tau(V,W)+ \theta^{\tau}(V,W))Z^{\omega},\quad \text{for all }V,W\in\mathfrak{X}(N),
\end{equation}
where $\mathbf{T}^{\omega}\in \Gamma(\Lambda^{2}T^{*}N\otimes TN)$ is the tensor introduced in (\ref{040825A}). In particular, the right-hand side of this expression is independent of the choice of $\tau$. Consequently, the class $\big[\mathbf{T}^{\omega}(V,W)\big]=\big[(P^{\tau}\circ\mathrm{\widetilde{Tor}^{\tau}})(V,W)\big]\in \Gamma(\mathcal{E})$ does not depend on the Galilean connection $\widetilde{\nabla}^{\tau}$. Moreover, combining Remark \ref{21072025j} with Equation $(\ref{070825A})$, we see that $\mathbf{T}^{\omega}$ uniquely determines all Galilean connections $\widetilde{\nabla}^{\tau}$ through the conditions
\begin{equation}\label{050825A}
\widetilde{\nabla}^{\tau}_{Z^{\omega}}Z^{\omega}=0, \quad \mathrm{rot\, }Z^{\omega}=0,\quad \mathrm{\widetilde{Tor}^{\tau}}=P^{\tau}\circ \mathbf{T}^{\omega}+ d\tau \otimes Z^{\omega}.
\end{equation}}
\end{remark}

\begin{remark}
{\rm Note that, if we define
$$
    \widetilde{\nabla}^{\overline{\tau}}_V W := P^{\overline{\tau}}\Big(\widetilde{\nabla}^{\tau}_V W\Big)+V(\bar{\tau}(W))Z^{\omega}-h^{\omega}(V,W)L_{\tau, \bar{\tau}},
$$
where $\widetilde{\nabla}^{\tau}$ is the Galilean connection determined by Conditions (\ref{050825A}), then $\widetilde{\nabla}^{\overline{\tau}}$ also satisfies the corresponding Conditions (\ref{050825A}). For instance, one readily checks that
$$
   \mathrm{\widetilde{Tor}^{\overline{\tau}}}(V,W)=P^{\overline{\tau}}\Big(\mathrm{\widetilde{Tor}^{\tau}}(V,W)\Big)+ d\overline{\tau} (V,W)Z^{\omega}=P^{\overline{\tau}}\Big(\mathbf{T}^{\omega}(V,W)\Big)+ d\overline{\tau} (V,W)Z^{\omega}.
$$}
\end{remark}
The next result reveals the underlying geometric structure of a Cartan geometry modeled on the future lightlike cone in Lorentz-Minkowski spacetime in terms of Galilean connections. It is equivalent to Theorem \ref{10072025A}.
\begin{theorem}\label{19082025hfjrfhgfujfg}
    Let $(p: \mathcal{P}\to N,\omega)$ be a lightlike Cartan geometry. Then, the manifold $N$ is endowed with the following structures:
    \begin{enumerate}
        \item[$(1)$] A lightlike metric $h^{\omega}$ and a vector field $Z^{\omega}$ spanning its radical distribution. 
         \item[$(2)$] A tensor field $\mathbf{T}^{\omega}\in\Gamma(\Lambda^{2}T^{*}N \otimes TN)$, which, for each $\tau \in S(N,h^{\omega},Z^{\omega})$, determines the Galilean connection $\widetilde{\nabla}^{\tau}$ through Conditions ${\rm (\ref{050825A})}$.
        \item[$(3)$] A map $D^{\omega}$ that assigns to each $\tau \in S(N,h^{\omega},Z^{\omega})$ a vector bundle morphism $D^{\tau}: TN \to \mathrm{An}\, (\tau)$, such that for any $\tau, \bar{\tau}\in S(N,h^{\omega},Z^{\omega})$, the following identity holds:
        \begin{equation}\label{23082025ytu}
        D^{\overline{\tau}}(W)  = P^{\overline{\tau}}\Big(D^{\tau}(W)+\dfrac{1}{2}h^{\omega}(L_{\tau, \bar{\tau}},L_{\tau, \bar{\tau}})W\Big)-\tau(W)L_{\tau, \bar{\tau}}- \widetilde{\nabla}^{\overline{\tau}}_W L_{\tau, \bar{\tau}}.
        \end{equation}
    \end{enumerate}   
\end{theorem}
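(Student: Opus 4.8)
The plan is to deduce this statement from Theorem \ref{10072025A} by repackaging the data $(\nabla^{\omega},D^{\omega})$ in the language of Galilean connections. Item $(1)$ requires nothing new: it is item $(1)$ of Theorem \ref{10072025A}, i.e. \cite[Theorem 4.4]{PacoLuz}.

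For item $(2)$ I would first record that the tensor $\mathbf{T}^{\omega}\in\Gamma(\Lambda^{2}T^{*}N\otimes TN)$ of $(\ref{040825A})$ is well defined: metric compatibility of $\nabla^{\mathcal{T}}$ makes $R^{\mathcal{T}}(V,W)$ skew for $\mathbf{h}$, so $\mathbf{h}(R^{\mathcal{T}}(V,W)\xi,\xi)=0$, while differentiating $\mathbf{h}(\xi,\xi)=0$ gives $\mathbf{h}(\Phi(V),\xi)=0$, whence $\Phi(T_{x}N)=\xi_{x}^{\perp}$ by a dimension count; consequently $R^{\mathcal{T}}(V,W)\xi$ lies in the image of $\Phi$. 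Next I would combine Proposition \ref{27102024A} with $(\ref{150825A})$ to obtain $(\ref{070825A})$, which in particular yields $P^{\tau}\circ\mathrm{\widetilde{Tor}^{\tau}}=P^{\tau}\circ\mathbf{T}^{\omega}$. Together with the facts $\widetilde{\nabla}^{\tau}_{Z^{\omega}}Z^{\omega}=0$, $\mathrm{rot}\,Z^{\omega}=0$ and $\tau\circ\mathrm{\widetilde{Tor}^{\tau}}=d\tau$ recalled in Remark \ref{21072025j}, the bijection \cite[Theorem 5.27]{SB03} then shows that $\mathbf{T}^{\omega}$ determines each $\widetilde{\nabla}^{\tau}$ through Conditions $(\ref{050825A})$.

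For item $(3)$, the assignment $\tau\mapsto D^{\tau}$ is the map $D^{\omega}$ already produced by Proposition \ref{27102024A}, and the transformation law $(\ref{23082025ytu})$ is exactly part $(2)$ of Lemma \ref{080825A}; this is part $(2)$ of Lemma \ref{27102024U} rewritten with $\widetilde{\nabla}^{\overline{\tau}}$ in place of $\nabla^{\overline{\tau}}$, which is legitimate because $L_{\tau,\overline{\tau}}\in\Gamma(\mathrm{An}\,(\overline{\tau}))$ and $\widetilde{\nabla}^{\overline{\tau}}$ restricts to $\nabla^{\overline{\tau}}$ on $\mathrm{An}\,(\overline{\tau})$ (last line of Remark \ref{21072025j}).

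It then remains to argue the asserted equivalence with Theorem \ref{10072025A}: from $\mathbf{T}^{\omega}$ one recovers $\nabla^{\omega}$ by restricting $\widetilde{\nabla}^{\tau}$ to $\mathrm{An}\,(\tau)$, and $(\ref{16072025J})$ follows from part $(1)$ of Lemma \ref{080825A}; conversely, from $\nabla^{\omega}$ one builds $\widetilde{\nabla}^{\tau}$ via $(\ref{150825C})$ and defines $\mathbf{T}^{\omega}$ by $(\ref{070825A})$ for any chosen $\tau$. I expect the one genuinely delicate point to be the $\tau$-independence of this last definition — i.e. checking that the pointwise transformation law $(\ref{16072025J})$ for the family $\{\nabla^{\tau}\}$ is equivalent to the existence of a single tensor obeying $(\ref{050825A})$ for all $\tau$ simultaneously; this rests on the bijection of \cite[Theorem 5.27]{SB03} together with the elementary identities $(\ref{090825D})$ and Lemma \ref{27102024D}, the remaining verifications being routine bookkeeping already carried out in the Remarks preceding the statement.
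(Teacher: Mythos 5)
Your proposal is correct and follows essentially the same route as the paper: the theorem is obtained by repackaging Theorem \ref{10072025A} via the tensor $\mathbf{T}^{\omega}$ of (\ref{040825A}), Equation (\ref{070825A}), the bijection of \cite[Theorem 5.27]{SB03} recalled in Remark \ref{21072025j}, and Lemma \ref{080825A}, which is exactly how the paper presents it in the Remarks preceding the statement. The "delicate point" you flag (the $\tau$-independence underlying Conditions (\ref{050825A})) is resolved in the paper by the unnumbered remark following Lemma \ref{080825A}, using precisely the identities (\ref{090825D}) and Lemma \ref{27102024D} that you cite.
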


\subsection{Construction of a lightlike extension vector bundle over $N$}\label{14082025utight}

In this Subsection, we construct a lightlike extension vector bundle over $N$ from the structures introduced in Theorem \ref{10072025A}. That is, we aim to show that these structures are sufficient to determine a unique lightlike Cartan geometry. The proof follows a standard procedure (see \cite{Baum}), so we will include only a summary for the sake of completeness. To this end, we introduce the following definition.
\begin{definition}\label{06122024A}
    Let $(N,h,Z)$ be a lightlike manifold.  We define a lightlike-compatible structure as a pair $(\nabla,D)$ satisfying the following conditions:
    \begin{itemize}
 
\item[$(1)$] The map $\nabla$ assigns to each $\tau \in S(N,h,Z)$ a metric linear connection $\nabla^{\tau}$ on the vector bundle $(\mathrm{An}\, (\tau)\to N,h)$, such that Identity $(\ref{16072025J})$ is satisfied.
        
\item[$(2)$] The map $D$ assigns to each $\tau \in S(N,h,Z)$ a vector bundle morphism $D^{\tau}: TN \to \mathrm{An}\, (\tau)$, such that Identity $(\ref{16072025P})$ is satisfied.
\end{itemize}
\end{definition}
From now on, let $(N,h,Z)$ be a lightlike manifold equipped with a lightlike-compatible structure $(\nabla,D)$. For each $\tau\in S(N,h,Z)$,  we construct a vector bundle associated with the data $(N,h,Z,\nabla^{\tau},D^{\tau})$ endowed with a linear connection and a bundle-like metric:
\begin{enumerate}
	\item Let $\mathcal{T}_{\tau}$ be the rank $m+2$ real vector bundle over $N$ given by
	$$
	\mathcal{T}_{\tau}:=\underline{\R}\oplus \mathrm{An}\,(\tau)\oplus \underline{\R},
	$$
	where $\underline{\R}$ denotes the trivial bundle $N\times\R\rightarrow N$. For any section $T^{\tau}\in\Gamma\left(\mathcal{T}_{\tau}\right)$ we write
	$$
	T^{\tau}=\begin{pmatrix}\alpha \\ X \\  \beta\end{pmatrix},\,\,\text{with } \alpha,\beta\in\mathcal{C}^{\infty}(N,\R) \,\,\text{and}\,\,X\in\Gamma(\mathrm{An}\,(\tau)).
	$$
	\item On $\mathcal{T}_{\tau}$ we consider the linear connection $\nabla^{\mathcal{T}_{\tau}}$ defined by
	$$
	\nabla^{\mathcal{T}_{\tau}}_W\begin{pmatrix}\alpha \\ X \\  \beta\end{pmatrix}:=\begin{pmatrix}W\big(\alpha\big)+\alpha\,\tau\big(W\big)-h(X,D^{\tau}(W)) \\ \alpha\,P^{\tau}(W)+\beta\,D^{\tau}(W)+\nabla^{\tau}_W X \\  W\big(\beta\big)-\beta\,\tau\big(W\big)-h(X,W)\end{pmatrix},
	$$
	where $W\in\mathfrak{X}(N)$, and the bundle-like metric $\mathbf{h}^{\tau}$ of Lorentzian signature given by
	$$
	\mathbf{h}^{\tau}\left(\begin{pmatrix}\alpha_1 \\ X_1 \\  \beta_1\end{pmatrix},\begin{pmatrix}\alpha_2 \\ X_2 \\  \beta_2\end{pmatrix}\right):=\alpha_1\beta_2+\beta_1\alpha_2+h(X_1,X_2).
	$$
It is straightforward to verify that the linear connection $\nabla^{\mathcal{T}_{\tau}}$ is metric for $\mathbf{h}^{\tau}$. The triple $(\mathcal{T}_{\tau},\mathbf{h}^{\tau},\nabla^{\mathcal{T}_{\tau}})$ is referred to as the $\tau$-tractor.
\end{enumerate}
 
Now, given $\tau,\overline{\tau}\in S(N,h,Z)$, we consider the corresponding $\tau$-tractor and $\overline{\tau}$-tractor. In view of Proposition \ref{17102024D}, Remark \ref{10072025t} and Lemma \ref{27102024U}, the following properties hold:
\begin{itemize}
\item[$(a)$] Every section $T^{\overline{\tau}}\in\Gamma\left(\mathcal{T}_{\overline{\tau}}\right)$ can be written as $T^{\overline{\tau}}=F_{\tau,\overline{\tau}}\cdot T^{\tau}$ for a uniquely determined section $T^{\tau}\in\Gamma\left(\mathcal{T}_{\tau}\right)$. In other words, the map $F_{\tau,\overline{\tau}}:\mathcal{T}_{\tau}\rightarrow\mathcal{T}_{\overline{\tau}}$ defined in $(\ref{27102024R})$ is a vector bundle isomorphism.

\item[$(b)$] $\mathbf{h}^{\overline{\tau}}(F_{\tau,\overline{\tau}}\cdot T_1^{\tau},F_{\tau,\overline{\tau}}\cdot T_2^{\tau})=\mathbf{h}^{\tau}(T^{\tau}_1,T^{\tau}_2),$ where $T^{\tau}_1,T^{\tau}_2\in\Gamma\left(\mathcal{T}_{\tau}\right)$.

\item[$(c)$] $F_{\tau,\overline{\tau}}\cdot\nabla^{\mathcal{T}_{\tau}}_W T^{\tau}=\nabla^{\mathcal{T}_{\overline{\tau}}}_W \left(F_{\tau,\overline{\tau}}\cdot T^{\tau}\right),$ where $T^{\tau}\in\Gamma\left(\mathcal{T}_{\tau}\right)$ and $W\in\mathfrak{X}(N)$.
\end{itemize}
As a consequence of $(a)$, one can define $\hat{\mathcal{T}}:=\displaystyle\underset{\scriptscriptstyle\tau\in S(N,h,Z)}{\bigcup}\mathcal{T}_{\tau}$ and introduce the following equivalence relation:
$$
\begin{pmatrix}a \\ X \\  b\end{pmatrix}\in\left(\mathcal{T}_{\tau}\right)_x\sim \begin{pmatrix}\overline{a} \\\overline{X} \\  \overline{b}\end{pmatrix}\in\left(\mathcal{T}_{\overline{\tau}}\right)_x\Longleftrightarrow\begin{pmatrix}\overline{a} \\\overline{X} \\  \overline{b}\end{pmatrix}=F_{\tau,\overline{\tau}}(x)\cdot\begin{pmatrix}a \\ X \\  b\end{pmatrix},
$$
for $x\in N$. We denote the resulting quotient set by $\mathcal{T}:=\hat{\mathcal{T}}/\hspace{-0.25em}\sim$. By interpreting each natural projection $\pi_{\tau}\colon \mathcal{T}_{\tau}\to \mathcal{T}$ as a vector bundle isomorphism, we endow $\mathcal{T}\rightarrow N$ with a well-defined canonical vector bundle structure over $N$. Note that $F_{\tau,\overline{\tau}}=\pi_{\overline{\tau}}^{-1}\circ \pi_{\tau}$. Moreover, in view of $(b)$, the vector bundle $\mathcal{T}\to N$ admits a natural bundle-like metric $\mathbf{h}$ of Lorentzian signature, uniquely determined by the requirement that each projection $\pi_{\tau}\colon \mathcal{T}_{\tau}\to \mathcal{T}$ be an isometry. This definition is consistent, since the maps $F_{\tau,\overline{\tau} }$ are themselves isometries. Note that the vector bundle  $\mathcal{T}\rightarrow N$ also admits a natural linear connection $\nabla^{\mathcal{T}}$ defined by
$$
\nabla^{\mathcal{T}}_W T:=\pi_{\tau}\Big(\nabla^{\mathcal{T}_{\tau}}_W \big(\pi_{\tau}^{-1}(T)\big)\Big),\quad W\in\mathfrak{X}(N).
$$
By $(c)$, the definition of $\nabla^{\mathcal{T}}$
is independent of the choice of $\tau \in S(N,h,Z)$. It is clear that  $\nabla^{\mathcal{T}}$ is metric with respect to the bundle-like metric $\mathbf{h}$. Additionally, we obtain a well-defined distinguished lightlike section of $\mathcal{T}$ given by
    $$
\xi:=\pi_{\tau}\begin{pmatrix}1 \\ 0 \\ 0\end{pmatrix}\in\Gamma(\mathcal{T}).
    $$ 
Since
    $$
    \nabla^{\mathcal{T}}_{W}\xi=\pi_{\tau}\begin{pmatrix}\tau(W) \\ P^{\tau}(W)\\ 0\end{pmatrix},
    $$
    the assignment  $
    \Phi(W):=\nabla^{\mathcal{T}}_{W}\xi
    $
    defines a vector bundle monomorphism from $TN$ into $\mathcal{T}$. Therefore, we obtain the claimed lightlike extension vector bundle over $N$.

Hence, we have constructed a lightlike extension vector bundle  from a lightlike manifold $(N,h,Z)$ equipped with a lightlike-compatible structure $(\nabla,D)$. It is straightforward to verify that this construction is the inverse procedure to that described in Subsection \ref{27102024p}. This means that the structures introduced in Theorem \ref{10072025A} are in bijective correspondence with lightlike Cartan geometries. Thus, all the above can be summarized in the following result.

\begin{theorem}\label{140825A}
There is a bijective correspondence between lightlike Cartan geometries and lightlike manifolds $(N,h,Z)$ equipped with a lightlike-compatible structure $(\nabla,D)$.

\bigskip
$$
			\begin{tikzpicture}
				
				\node (A) at (0,0) {$(p:\mathcal{P}\to N, \omega)$};
				\node (B) at (4,0) {$(\mathcal{T},\mathbf{h},\nabla^{\mathcal{T}},\xi)$};
				\node (C) at (2,-1.5) {$(N,h,Z,\nabla,D)$};
				
				\draw[<->] (A) -- (B);
				\draw[<->] (B) -- (C);
				\draw[<->] (C) -- (A);
			\end{tikzpicture}
$$

\end{theorem}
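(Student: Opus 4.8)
The plan is to assemble the statement from three pieces: the bijection of Section \ref{27102024y} between lightlike Cartan geometries $(p:\mathcal{P}\to N,\omega)$ and standard tractor bundles (equivalently, lightlike extension vector bundles) $(\mathcal{T},\mathbf{h},\nabla^{\mathcal{T}},\xi)$, and a new bijection between the latter and lightlike manifolds $(N,h,Z)$ equipped with a lightlike-compatible structure $(\nabla,D)$. Concretely, it suffices to produce the two maps of the new bijection and check that they are mutually inverse; composing with the already-established double arrow of the upper edge then yields the commutative triangle of bijections displayed in the statement.

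First I would treat the map $(\mathcal{T},\mathbf{h},\nabla^{\mathcal{T}},\xi)\mapsto (N,h,Z,\nabla,D)$. Set $h:=h^{\omega}$, $Z:=Z^{\omega}$, as produced by $\Phi=\nabla^{\mathcal{T}}\xi$ in Remark \ref{14082025ytuigh}. For each $\tau\in S(N,h,Z)$, Proposition \ref{27102024A} expresses $\nabla^{\mathcal{T}}$ in the $\tau$-splitting (\ref{01072025A}) through a metric connection $\nabla^{\tau}$ on $(\mathrm{An}\,(\tau),h)$ and a bundle morphism $D^{\tau}\colon TN\to\mathrm{An}\,(\tau)$; this defines the maps $\nabla^{\omega}$ and $D^{\omega}$. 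The transformation laws (\ref{16072025J}) and (\ref{16072025P}) required in Definition \ref{06122024A} are exactly items $(1)$ and $(2)$ of Lemma \ref{27102024U}, so $(\nabla^{\omega},D^{\omega})$ is a lightlike-compatible structure; this is precisely Theorem \ref{10072025A}.

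Next I would treat the reverse map, which is the gluing construction of Subsection \ref{14082025utight}. Starting from $(N,h,Z,\nabla,D)$, one forms the $\tau$-tractors $(\mathcal{T}_{\tau},\mathbf{h}^{\tau},\nabla^{\mathcal{T}_{\tau}})$ and glues them along the transition maps $F_{\tau,\overline{\tau}}$ of (\ref{27102024R}). The key technical point is that $\{F_{\tau,\overline{\tau}}\}$ is a genuine cocycle, i.e. $F_{\tau,\tau}=\mathrm{Id}$ and $F_{\overline{\tau},\alpha}\circ F_{\tau,\overline{\tau}}=F_{\tau,\alpha}$, so that the relation $\sim$ on $\hat{\mathcal{T}}=\bigcup_{\tau}\mathcal{T}_{\tau}$ is transitive; this is a matrix computation from (\ref{27102024R}) using the elementary identities (\ref{090825D}) for the vector fields $L_{\tau,\overline{\tau}}$ together with Lemma \ref{27102024D}. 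That each $F_{\tau,\overline{\tau}}$ is a fibrewise isometry and intertwines the connections $\nabla^{\mathcal{T}_{\tau}}$—properties $(b)$ and $(c)$ of the subsection—follows from Remark \ref{10072025t} and from identity (\ref{070325B}), whose validity is exactly the content of the lightlike-compatibility conditions (\ref{16072025J}) and (\ref{16072025P}). Hence the quotient $\mathcal{T}=\hat{\mathcal{T}}/{\sim}$ inherits a well-defined Lorentzian bundle metric $\mathbf{h}$, a compatible connection $\nabla^{\mathcal{T}}$, and a lightlike section $\xi=\pi_{\tau}(1,0,0)^{t}$; the computation $\nabla^{\mathcal{T}}_{W}\xi=\pi_{\tau}(\tau(W),P^{\tau}(W),0)^{t}$ shows $\Phi=\nabla^{\mathcal{T}}\xi$ is a monomorphism $TN\to\mathcal{T}$, so $(\mathcal{T},\mathbf{h},\nabla^{\mathcal{T}},\xi)$ is a lightlike extension vector bundle in the sense of Definition \ref{16102024A}, and therefore, by Section \ref{27102024y}, the standard tractor bundle of a lightlike Cartan geometry.

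Finally I would verify that the two maps are mutually inverse. Applying the forward map to a $\mathcal{T}$ obtained by gluing returns, for each $\tau$, the pair $(\nabla^{\tau},D^{\tau})$ one started with: under the isometry $\pi_{\tau}\colon\mathcal{T}_{\tau}\to\mathcal{T}$ one checks $\pi_{\tau}(0,0,1)^{t}=\eta^{\tau}$ and $\pi_{\tau}(0,X,0)^{t}=\Phi(X)$ directly from the defining properties of $\eta^{\tau}$ and $\Phi$, so the $\tau$-splitting (\ref{01072025A}) of $\mathcal{T}$ becomes the tautological splitting of $\mathcal{T}_{\tau}$, in which $\nabla^{\mathcal{T}_{\tau}}$ has by construction the form (\ref{17102024A}). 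Conversely, given a standard tractor bundle, Proposition \ref{17102024D} identifies it with the gluing of its own $\tau$-splittings via the same maps $F_{\tau,\overline{\tau}}$, so reconstructing from $(\nabla^{\omega},D^{\omega})$ returns $(\mathcal{T},\mathbf{h},\nabla^{\mathcal{T}},\xi)$ up to the canonical isomorphism. Composing this bijection with the one of Section \ref{27102024y} gives the triangle of bijections. I expect the only genuinely delicate steps to be the cocycle identity for $F_{\tau,\overline{\tau}}$—the one place where the identities (\ref{090825D}) and Lemma \ref{27102024D} are essential—and the bookkeeping ensuring that both round-trip compositions are the identity; everything else is routine.
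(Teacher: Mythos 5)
Your proposal is correct and follows essentially the same route as the paper: the forward map is Proposition \ref{27102024A} together with the transformation laws of Lemma \ref{27102024U} (i.e.\ Theorem \ref{10072025A}), the reverse map is the gluing of the $\tau$-tractors along the transition maps $F_{\tau,\overline{\tau}}$ from Subsection \ref{14082025utight}, and the upper edge of the triangle is the correspondence of Section \ref{27102024y}. Your explicit insistence on the cocycle identity for $F_{\tau,\overline{\tau}}$ (via (\ref{090825D}) and Lemma \ref{27102024D}) is a point the paper leaves implicit, but it is the same argument, not a different one.
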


\subsection{Examples}\label{14082025mqsi}

Let $(M^{2n+1}, g)$ be an odd-dimensional Riemannian manifold with $n\geq 1$ that admits a Killing vector field $Z \in\mathfrak{X}(M)$. Define $\varphi:=-\nabla^{g} Z$, where $\nabla^g$ denotes the Levi-Civita connection of $g$. The triple $(M,g,Z)$ is called a Sasakian manifold if:
\begin{enumerate}
    \item $g(Z,Z)=1.$
    \item $\varphi^{2}=-\mathrm{Id}+g(Z,\cdot)Z.$
    \item $\big(\nabla^{g}_{V}\varphi\big)(W)=g(V,W)Z-g(W,Z)V,$\quad  $V,W\in \mathfrak{X}(M)$.
\end{enumerate}
These conditions imply several additional relations:
$$
d\eta(V,W) = 2g(V, \varphi(W)),  
\quad \big(\nabla_{V}^{g}\eta\big)(W) = g(V, \varphi(W)), \quad g(V,W) = g(\varphi(V), \varphi(W)) + \eta(V)\eta(W),
$$
where $\eta:=g(Z,\cdot)$ is the $1$-form metrically equivalent to the vector field $Z$. The Sasakian form is the $2$-form defined by $\Psi(V,W):=g(V,\varphi(W))$; for a general reference on Sasakian manifolds, see \cite{Blair}.

The generalized Tanaka connection was introduced in \cite{Tanno}. It is a $g$-metric connection in the broader class of contact metric manifolds, given by
$$
\nabla^{*}_{V}W=\nabla^{g}_{V}W+\eta(V)\varphi(W)-\eta(W)\nabla^{g}_{V}Z+\big(\nabla^{g}_{V}\eta\big)(W)Z,
$$
with the additional condition $\nabla^{*}\eta=0$ (equivalently, $\nabla^{*}Z=0)$. For an excellent reference on metric connections with torsion and their applications, see \cite{Agr}. In the Sasakian case, the generalized Tanaka connection reduces to
$$
\nabla^{*}_{V}W=\nabla^{g}_{V}W+\eta(V)\varphi(W)+\eta(W)\varphi(V)+\Psi(V,W) Z.
$$
Every Sasakian manifold naturally carries the lightlike metric
$$
h(V,W):=g(\varphi(V), \varphi(W))=g(V,W)-\eta(V)\eta(W)
$$
whose radical is globally spanned by the vector field $Z$. 

We now equip $(M,h,Z)$ with a lightlike-compatible structure $(\nabla,D)$. First, for $\eta\in S(M,h,Z)$, we define $\nabla^{\eta}$ to be the restriction of $\nabla^{*}$ to $\mathrm{An}\,(\eta)$, so that
$$
\nabla^{\eta}_{V}X:=\nabla^{g}_{V}X+\eta(V)\varphi(X)+\Psi(V,X)Z, \quad X\in \Gamma\left(\mathrm{An}\, (\eta)\right).
$$
Since $\nabla^{*}$ is $g$-metric and $\nabla^{*}\eta=0$, a straightforward computation shows that $\nabla^{\eta}$ is a metric linear connection on $(\mathrm{An}\, (\eta)\to M,h)$. According to Remark \ref{130825A}, for every $\tau \in S(M,h,Z)$ and $X\in \Gamma\left(\mathrm{An}\, (\eta)\right)$, the map $\nabla$ is given by
$$
\tau\in S(M,h,Z)\mapsto \nabla^{\tau}, \quad \nabla^{\tau}_{V}\big(P^{\tau}(X)\big)= P^{\tau}\Big(\nabla^{g}_V X+\eta(V)\varphi(X)-\tau(X)V\Big)-h(X,V)L_{\eta, \tau}.
$$
Similarly, we set $D^{\eta}:=\varphi$, so that
$$
         D^{\tau}(V)  = P^{\tau}\Big(\varphi(V)+\frac{1}{2}h(L_{\eta, \tau},L_{\eta, \tau})V\Big)-\eta(V)L_{\eta, \tau}- \nabla^{\tau}_V L_{\eta, \tau}.
$$
Theorem \ref{140825A} then provides a lightlike Cartan geometry associated with the Sasakian manifold $(M,g,Z)$.

We can determine the associated vector bundle endomorphism $A_{Z}\colon \mathcal{E}\to \mathcal{E}$ from the following sequence of computations:
$$
(\mathcal{L}_{Z}h)(V,W)=(\mathcal{L}_{Z}g)(V,W)-(\mathcal{L}_{Z}(\eta \otimes \eta))(V,W)=-(\mathcal{L}_{Z}(\eta \otimes \eta))(V,W).
$$
A direct calculation yields
$$
(\mathcal{L}_{Z}\eta)(V)=(di_{Z}\eta)(V)+(i_{Z}d\eta)(V)=d\eta(Z,V)=2g(Z, \varphi(V))=0.
$$
Therefore, the vector bundle endomorphism $A_Z$ is identically zero (see Equation $(\ref{16072025D})$). This aligns perfectly with Remark \ref{21072025oki}, as we have $J = 0$ and, in particular, $J_{\mathrm{sym}} = 0$.

As mentioned in Remark \ref{21072025j}, every connection $\nabla^{\tau}$ induces a Galilean connection defined by
$$
\widetilde{\nabla}^\tau _V W=V\big(\tau(W)\big)Z+\tau(W)P^{\tau}(V)+\nabla^{\tau}_V \big(P^{\tau}(W)\big).
$$
In particular, for the Galilean connection associated with $\nabla^{\eta}$ we obtain
\begin{align*}
\widetilde{\nabla}^\eta _V W &=V\big(\eta(W)\big)Z+\eta(W)P^{\eta}(V)+\nabla^{\eta}_V \big(P^{\eta}(W)\big)\\
& =\nabla^{*}_{V}W +\eta(W)P^{\eta}(V).
\end{align*}
It should be noted that this connection does not coincide with the generalized Tanaka connection.

In a similar way, one can construct a lightlike Cartan geometry on a degenerate hyperplane of Lorentz–Minkowski spacetime as follows. Let $\Pi\subset\L^{m+2}$ be a degenerate hyperplane. In suitable coordinates $(r_{0}, \dots,r_{m})$ on $\Pi$, the induced metric takes the form
$$
h=dr_{1}^{2}+\dots+dr_{m}^{2},
$$
with the lightlike vector field $Z = \frac{\partial}{\partial r_{0}}$. Consider $\alpha:=dr_{0}\in S(\Pi, h, Z)$. According to Remark \ref{130825A}, it suffices to define
$$
\nabla^{\alpha}_{V}X:=\sum_{i=1}^{m} V(x_{i}) \frac{\partial}{\partial r_{i}}, 
$$
where $X=\sum\limits_{i=1}^m x_{i}\frac{\partial}{\partial r_{i}}\in\Gamma\left(\mathrm{An}\,(\alpha)\right)$,
and
$$
D^{\alpha}(V):=P^{\alpha}(V).
$$
It is straightforward to check that $\nabla^{\alpha}$ is a metric linear connection on $(\mathrm{An}\,(\alpha)\rightarrow \Pi,h)$, and that $\mathcal{L}_{Z} h = 0$.

\section{A normalization for lightlike manifolds with $A_{Z}=\mathrm{Id}$}\label{14082025ythgirthndokgv}

In this Section, we present a normalization procedure for a subclass of lightlike Cartan geometries. This construction yields a simplified description of the geometry under suitable assumptions. As in the general framework, the normalization conditions for lightlike Cartan geometries single out a unique lightlike Cartan geometry (up to isomorphism) associated with a lightlike manifold $(N,h,Z)$.
However, it does not encompass the entire class: certain geometries fall outside the scope of this normalization, see Remark $\ref{21072025ghj}$. Despite this limitation, the method captures a substantial and geometrically meaningful subclass, which we analyze in detail below. The approach is inspired by the normalization procedure for normal Cartan connections in conformal geometry. Due to structural similarities between the conformal and lightlike models, an adaptation of this procedure proves effective for the subclass considered here. 
For the formulation of the normalization conditions in the conformal setting within the framework of tractor bundles, see \cite{CG03}; see also \cite{MP1} for a related treatment. For a broader and more general perspective, see \cite{CS09}.

Let $(N,h,Z)$ be a lightlike manifold. In Section \ref{27102024y} we proved that giving a lightlike extension vector bundle $(\mathcal{T},\mathbf{h},\nabla^{\mathcal{T}},\xi)$ for $(N,h,Z)$ is equivalent to giving a lightlike Cartan geometry $(p: \mathcal{P}\to N, \omega)$ such that $h^{\omega}=h$ and $Z^{\omega}=Z$, in the sense that the former arises as the standard tractor bundle associated with the latter. In Section \ref{14082025847564857} we showed that the tractor connection $\nabla^{\mathcal{T}}$ uniquely determines a lightlike–compatible structure $(\nabla,D)$, obtained via the decomposition described in Proposition \ref{27102024A}. 
\begin{proposition}\label{21072025y}
Let $(N,h,Z)$ be a lightlike manifold, and let $(\mathcal{T},\mathbf{h},\nabla^{\mathcal{T}},\xi)$ be a lightlike extension vector bundle for $(N,h,Z)$. Then, the following statements are equivalent:
\begin{itemize}
\item[$(1)$] $R^{\mathcal{T}}(V,W)\xi$ is collinear with $\xi$ for every $V,W\in\mathfrak{X}(N)$.
\item[$(2)$] For each $\tau\in S(N,h,Z)$, the map $\nabla$ satisfies:
\begin{enumerate}
\item[$(2.1)$] $\nabla^{\tau}_Z X=X+P^{\tau}\big([Z,X]\big)$, for every $X\in\Gamma\left( \mathrm{An}\, (\tau)\right)$.
\item[$(2.2)$] $\nabla^{\tau}_X Y-\nabla^{\tau}_Y X-P^{\tau}\big([X,Y]\big)=0$, for every $X,Y\in\Gamma\left( \mathrm{An}\, (\tau)\right)$.
\end{enumerate}
\item[$(3)$] For each $\tau\in S(N,h,Z)$, the corresponding Galilean connection $\widetilde{\nabla}^{\tau}$ defined in $(\ref{150825C})$ satisfies $P^{\tau}\circ\mathrm{\widetilde{Tor}^{\tau}}=0.$
\end{itemize}
In particular, the condition that $R^{\mathcal{T}}(V,W)\xi$ is collinear with $\xi$ uniquely determines the map $\nabla$.
\end{proposition}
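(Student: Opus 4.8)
The plan is to read off every assertion from the coordinate expression of $R^{\mathcal{T}}(V,W)\xi$ in a $\tau$-splitting, as recorded in Remarks~\ref{17072025u} and~\ref{21072025j}. Fix $\tau\in S(N,h,Z)$. In the decomposition~(\ref{01072025A}) the section $\xi$ has coordinates $(1,0,0)^{t}$, so $R^{\mathcal{T}}(V,W)\xi$ is collinear with $\xi$ exactly when its middle and bottom components vanish; in~(\ref{03072025A}) the bottom component is already zero, hence $R^{\mathcal{T}}(V,W)\xi$ is collinear with $\xi$ for all $V,W$ if and only if the middle component
$$
M^{\tau}(V,W):=\nabla^{\tau}_{V}(P^{\tau}(W))-\nabla^{\tau}_{W}(P^{\tau}(V))-P^{\tau}([V,W])-B^{\tau}(V,W)
$$
vanishes identically; since the left-hand condition is independent of $\tau$, this also shows $M^{\tau}\equiv 0$ holds for one $\tau$ iff it holds for all. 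By~(\ref{150825A}) one has $M^{\tau}(V,W)=\big(P^{\tau}\circ\mathrm{\widetilde{Tor}^{\tau}}\big)(V,W)$, which gives the equivalence $(1)\Leftrightarrow(3)$ at once.

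For $(1)\Leftrightarrow(2)$ I would use that $M^{\tau}$ is the middle component of the curvature tensor $R^{\mathcal{T}}$, hence $\mathcal{C}^{\infty}(N,\R)$-bilinear and skew-symmetric in $V,W$, and that $B^{\tau}(V,W)=\tau(V)W-\tau(W)V$ takes values in $\mathrm{An}\,(\tau)$ (it is annihilated by $\tau$), so that $M^{\tau}$ is $\mathrm{An}\,(\tau)$-valued. Consequently $M^{\tau}\equiv 0$ is equivalent to the vanishing of $M^{\tau}$ on the pairs coming from the splitting $TN\overset{\tau}{=}\underline{\R}Z\oplus\mathrm{An}\,(\tau)$, namely on $(Z,X)$ and on $(X,Y)$ with $X,Y\in\Gamma(\mathrm{An}\,(\tau))$. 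Using $P^{\tau}(Z)=0$, the fact that $P^{\tau}$ acts as the identity on $\mathrm{An}\,(\tau)$, and $B^{\tau}(Z,X)=X$, $B^{\tau}(X,Y)=0$, one obtains
$$
M^{\tau}(Z,X)=\nabla^{\tau}_{Z}X-P^{\tau}([Z,X])-X,\qquad M^{\tau}(X,Y)=\nabla^{\tau}_{X}Y-\nabla^{\tau}_{Y}X-P^{\tau}([X,Y]),
$$
so that $M^{\tau}(Z,X)=0$ is precisely $(2.1)$ and $M^{\tau}(X,Y)=0$ is precisely $(2.2)$. Running this for every $\tau\in S(N,h,Z)$ yields the three-way equivalence.

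It remains to prove that these conditions determine $\nabla$ uniquely. Fix $\tau\in S(N,h,Z)$. By Remark~\ref{21072025j}, the Galilean connection $\widetilde{\nabla}^{\tau}$ attached to $\nabla^{\tau}$ always has vanishing gravitational field $\widetilde{\nabla}^{\tau}_{Z}Z=0$ and vanishing vorticity $\mathrm{rot}\,Z=0$ along $Z$; by the bijection of~\cite[Theorem 5.27]{SB03} it is therefore completely determined by the remaining datum $P^{\tau}\circ\mathrm{\widetilde{Tor}^{\tau}}$, which condition~$(3)$ forces to be $0$. Hence $\widetilde{\nabla}^{\tau}$ is uniquely pinned down, and so is $\nabla^{\tau}$, because $\nabla^{\tau}_{V}X=\widetilde{\nabla}^{\tau}_{V}X$ for $X\in\Gamma(\mathrm{An}\,(\tau))$. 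Since this holds for every $\tau$, the map $\nabla$ is unique. (Alternatively, $(2.1)$ fixes $\nabla^{\tau}_{Z}$, while $(2.2)$ together with the metric compatibility of $\nabla^{\tau}$ with $h$ gives a Koszul-type formula for $h(\nabla^{\tau}_{X}Y,W)$ with $X,Y,W\in\Gamma(\mathrm{An}\,(\tau))$, which determines $\nabla^{\tau}_{X}Y$ since $h$ is nondegenerate on $\mathrm{An}\,(\tau)$; bilinearity then recovers all of $\nabla^{\tau}$.)

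The computations are short once~(\ref{03072025A}) and~(\ref{150825A}) are in hand; the only point demanding care is the reduction in the second paragraph — one must check that skew-symmetry and $\mathcal{C}^{\infty}(N,\R)$-bilinearity of $M^{\tau}$, combined with the containment $B^{\tau}(V,W)\in\mathrm{An}\,(\tau)$, genuinely allow testing $M^{\tau}$ only on the pairs $(Z,X)$ and $(X,Y)$ without losing mixed contributions, and that the two resulting identities coincide verbatim with $(2.1)$ and $(2.2)$. Everything else is bookkeeping with the projections $P^{\tau}$ and the already-established formulas.
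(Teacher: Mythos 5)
Your proposal is correct and follows essentially the same route as the paper's proof: reading off the middle component of $R^{\mathcal{T}}(V,W)\xi$ from the $\tau$-splitting formula, specializing to the pairs $(Z,X)$ and $(X,Y)$ to obtain $(2.1)$ and $(2.2)$, deducing $(1)\Leftrightarrow(3)$ from the torsion expression, and invoking \cite[Theorem 5.27]{SB03} via Remark~\ref{21072025j} for uniqueness. Your explicit tensoriality/skew-symmetry argument for the converse $(2)\Rightarrow(1)$ merely fills in what the paper calls a ``straightforward computation.''
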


\begin{proof}
$(1)\Rightarrow(2):$ From Equation $(\ref{03072025A})$, we obtain 
\begin{equation}\label{21072025V}
\nabla^{\tau}_{V}(P^{\tau}(W))-\nabla^{\tau}_{W}(P^{\tau}(V))-P^{\tau}([V,W])=B^{\tau}(V,W)
\end{equation}
for every $V,W\in\mathfrak{X}(N)$. Now, taking $V=Z$ and $W\in\Gamma\left( \mathrm{An}\, (\tau)\right)$, we recover $(2.1)$, while choosing $V,W\in\Gamma\left( \mathrm{An}\, (\tau)\right)$ yields $(2.2)$. $(2)\Rightarrow(1):$ It is a straightforward computation to verify that Equation  $(\ref{21072025V})$ follows from $(2.1)$ and $(2.2)$. The equivalence $(1)\Leftrightarrow(3)$ follows directly from Equation (\ref{150825A}). The last statement is a consequence of \cite[Theorem 5.27]{SB03}; see  Remark $\ref{21072025j}$.
\end{proof}

\begin{remark}\label{21072025ghj}
{\rm It is important to emphasize that the normalization presented in Proposition $\ref{21072025y}$ does not hold in general. As noted in Remark $\ref{21072025oki}$, the map $\nabla$ contains a component determined by the intrinsic structure of $(N,h,Z)$. In fact, if $R^{\mathcal{T}}(V,W)\xi$ is collinear with $\xi$, then, by Proposition $\ref{21072025y}$ and Equation $(\ref{21072025mjh})$, it follows that $A_Z$ must be the identity map. That is, $Z$ must necessarily be a homothetic vector field. Therefore, such a normalization condition can only be imposed in this specific setting. Note that if $Z$ is a conformal vector field satisfying $\mathcal{L}_Z h=2fh$ for some non-vanishing function $f\in\mathcal{C}^{\infty}(N,\R)$, one can always rescale $Z$ by defining $\hat{Z}:=\frac{1}{f}Z$, so that $\hat{Z}$ becomes homothetic.}
\end{remark}

\begin{remark}\label{nabla1234}
{\rm The condition that $R^{\mathcal{T}}(V,W)\xi$ is collinear with $\xi$ is independent of the choice of any $\tau\in S(N,h,Z)$. Consequently, the map $\nabla$ defined in Proposition $\ref{21072025y}(2)$ must satisfy Equation $(\ref{16072025J})$. Nevertheless, a direct verification is sufficiently interesting to merit a brief sketch of the proof. In \cite[Lemma 5.25]{SB03}, a Koszul-like formula for Galilean connections is provided. Applying this formula to the Galilean connection $\widetilde{\nabla}^{\tau}$ described in Remark \ref{21072025j}, and assuming that $R^{\mathcal{T}}(V,W)\xi$ is collinear with $\xi$, we obtain the classical Koszul formula but adapted to our family of connections $\nabla^{\tau}$. Specifically, for each $\tau\in S(N,h,Z)$, we have:
\begin{align*}
 2h(\nabla^{\tau}_W X,Y) & =W\big(h(X,Y)\big)+X\big(h(W,Y)\big)-Y\big(h(X,W)\big) \\
  &+h([W,X],Y)-h([X,Y],W)+h([Y,W],X),
\end{align*}
for every $X,Y\in\Gamma\left(\mathrm{An}\, (\tau)\right)$ and $W\in\mathfrak{X}(N).$ Now, given $\tau,\overline{\tau}\in S(N,h,Z)$, it suffices to use this Koszul formula to compute $2h\big(\nabla^{\overline{\tau}}_W \big(P^{\overline{\tau}}(X)\big),P^{\overline{\tau}}(Y)\big)$ for $X,Y\in\Gamma\left(\mathrm{An}\, (\tau)\right)$, and thereby deduce the change Equation $(\ref{16072025J})$.}
\end{remark}

\begin{remark}\label{14082025hgjdkld}
{\rm Properly totally umbilical lightlike hypersurfaces in Lorentzian manifolds constitute a broad class of examples where the condition $R^{\mathcal{T}}(V,W)\,\xi$ being collinear with $\xi$ may consistently occur. Consider an immersion $\psi:(N,h,Z)\rightarrow (M,g)$, where $(M,g)$ is a Lorentzian manifold and the lightlike metric is given by $h=\psi^*g$. The Levi-Civita connection $\nabla^g$ of $(M,g)$ gives rise to the induced connection $\overline{\nabla}^g:\mathfrak{X}(N)\times\overline{\mathfrak{X}}(N)\rightarrow\overline{\mathfrak{X}}(N)$, where $\overline{\mathfrak{X}}(N)$ denotes the space of vector fields along the immersion $\psi$; see \cite[Chap. 4]{One83}. Moreover, in this context one can introduce the null-Weingarten map with respect to $Z$, defined as $$[\overline{\nabla}^g Z]:\mathcal{E}\rightarrow \mathcal{E}, \quad [V]\mapsto [\overline{\nabla}^g_V Z],$$
as well as the null second fundamental form, given by $$B_Z([V],[W]):=h\big([\overline{\nabla}^g Z][V],[W]\big).$$
As shown in \cite[Section 5]{PacoLuz}, in this setting we have $A_Z=[\overline{\nabla}^g Z]$. Furthermore, a lightlike hypersurface is said to be properly totally umbilical if there exists a non-vanishing function $f$ on $N$ such that $B_Z=fh$. Consequently, the properly totally umbilical condition is equivalent to the vector field $Z$ being conformal. Taking into account Remark \ref{21072025ghj}, we conclude that the condition $(1)$ in Proposition \ref{21072025y} is admissible for every properly totally umbilical lightlike hypersurface.}
\end{remark}
We now focus on the conditions that determine the map 
$D$.
\begin{proposition}\label{26067872}
Let $(N,h,Z)$ be a lightlike manifold, and let $(\mathcal{T},\mathbf{h},\nabla^{\mathcal{T}},\xi)$ be a lightlike extension vector bundle for $(N,h,Z)$. Assume that $R^{\mathcal{T}}(V,W)\xi$ is collinear with $\xi$ for every $V,W\in\mathfrak{X}(N)$.
 Then, the following statements are equivalent:
    \begin{itemize}
     \item[$(1)$] $R^{\mathcal{T}}(Z,V)\Phi(W)$ is collinear with $\xi$ for every $V,W\in\mathfrak{X}(N)$.
     \item[$(2)$] For each $\tau \in S(N,h,Z)$, the curvature term $R^{\mathcal{T}}(Z,X)\Phi(Y)$ is collinear with $\xi$ for every $X,Y\in \Gamma\left(\mathrm{An}\, (\tau)\right)$.
     \item[$(3)$] For each $\tau\in S(N,h,Z)$, the map $D$ satisfies the following equation
     \begin{equation}\label{300720258}
h\big(X,Y\big)D^{\tau}(Z)-h\big(Y,D^{\tau}(Z)\big)X=R^{\tau}(Z,X)Y,
     \end{equation}
     where $R^{\tau}(V,W)X:=\nabla^{\tau}_V\nabla^{\tau}_W X-\nabla^{\tau}_W\nabla^{\tau}_V X-\nabla^{\tau}_{[V,W]} X$ for all $V,W\in\mathfrak{X}(N)$ and $X,Y\in\Gamma\left( \mathrm{An}\, (\tau)\right).$
    \end{itemize}
\end{proposition}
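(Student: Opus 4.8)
The plan is to run the whole argument inside the $\tau$-adapted splitting $\mathcal{T}\overset{\tau}{\cong}\underline{\R}\oplus\mathrm{An}\,(\tau)\oplus\underline{\R}$ of Proposition~\ref{27102024A}. In that splitting $\xi\overset{\tau}{=}(1,0,0)^{t}$, so a tractor $(\alpha,X,\beta)^{t}$ is collinear with $\xi$ exactly when its $\mathrm{An}\,(\tau)$-component $X$ and its bottom component $\beta$ both vanish. The first observation is that, under the standing hypothesis, the bottom component is never an obstruction: since $\nabla^{\mathcal{T}}$ is metric, $R^{\mathcal{T}}(Z,V)$ is skew-adjoint for $\mathbf{h}$, and $\mathbf{h}(\Phi(W),\xi)=\mathbf{h}(\Phi(W),\Phi(Z))=h(W,Z)=0$ by Proposition~\ref{050125F}, whence
$$\mathbf{h}\big(R^{\mathcal{T}}(Z,V)\Phi(W),\xi\big)=-\mathbf{h}\big(\Phi(W),R^{\mathcal{T}}(Z,V)\xi\big)=0,$$
because $R^{\mathcal{T}}(Z,V)\xi\in\R\,\xi$ by assumption. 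Hence, for the curvature tractors appearing in $(1)$--$(3)$, being collinear with $\xi$ is equivalent to the vanishing of the $\mathrm{An}\,(\tau)$-component alone.

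For $(1)\Leftrightarrow(2)$, I would fix $\tau\in S(N,h,Z)$ and decompose $V=\tau(V)Z+P^{\tau}(V)$ and $W=\tau(W)Z+P^{\tau}(W)$, so that $\Phi(W)=\tau(W)\,\xi+\Phi(P^{\tau}(W))$ since $\Phi(Z)=\xi$. By $\mathcal{C}^{\infty}$-bilinearity of $R^{\mathcal{T}}$ in its vector-field entries together with $R^{\mathcal{T}}(Z,Z)=0$,
$$R^{\mathcal{T}}(Z,V)\Phi(W)=\tau(W)\,R^{\mathcal{T}}\!\big(Z,P^{\tau}(V)\big)\xi+R^{\mathcal{T}}\!\big(Z,P^{\tau}(V)\big)\Phi\big(P^{\tau}(W)\big),$$
and the first summand lies in $\R\,\xi$ by hypothesis. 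Since $P^{\tau}$ restricts to the identity on $\Gamma(\mathrm{An}\,(\tau))$, as $V,W$ run over $\mathfrak{X}(N)$ the pair $\big(P^{\tau}(V),P^{\tau}(W)\big)$ runs over all of $\Gamma(\mathrm{An}\,(\tau))\times\Gamma(\mathrm{An}\,(\tau))$, which yields both implications at once (invoking once more, for $(2)\Rightarrow(1)$, that $R^{\mathcal{T}}(Z,V)\xi$ is collinear with $\xi$).

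The core of the proof is $(2)\Leftrightarrow(3)$, which is the explicit computation of the $\mathrm{An}\,(\tau)$-component of $R^{\mathcal{T}}(Z,X)\Phi(Y)$ for $X,Y\in\Gamma(\mathrm{An}\,(\tau))$. From formula (\ref{17102024A}) one reads off $\nabla^{\mathcal{T}}_{U}\xi=\Phi(U)$, $\nabla^{\mathcal{T}}_{U}\eta^{\tau}=\Phi(D^{\tau}(U))-\tau(U)\eta^{\tau}$ and, for any $Y\in\Gamma(\mathrm{An}\,(\tau))$,
$$\nabla^{\mathcal{T}}_{U}\Phi(Y)=\Phi\big(\nabla^{\tau}_{U}Y\big)-h\big(D^{\tau}(U),Y\big)\,\xi-h(U,Y)\,\eta^{\tau}.$$
Feeding these into $R^{\mathcal{T}}(Z,X)\Phi(Y)=\nabla^{\mathcal{T}}_{Z}\nabla^{\mathcal{T}}_{X}\Phi(Y)-\nabla^{\mathcal{T}}_{X}\nabla^{\mathcal{T}}_{Z}\Phi(Y)-\nabla^{\mathcal{T}}_{[Z,X]}\Phi(Y)$, and simplifying with $\tau(Z)=1$, $h(Z,\cdot)=0$ and $\Phi(Z)=\xi$, the $\xi$- and $\eta^{\tau}$-valued contributions become irrelevant by the first step, while the $\mathrm{An}\,(\tau)$-component collapses to
$$R^{\tau}(Z,X)Y-h(X,Y)\,D^{\tau}(Z)+h\big(Y,D^{\tau}(Z)\big)X.$$
Demanding that this vanish for all $X,Y\in\Gamma(\mathrm{An}\,(\tau))$ is precisely Equation (\ref{300720258}), which establishes $(2)\Leftrightarrow(3)$.

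The step I expect to be the main obstacle is the bookkeeping in this last computation: the correction terms $-h(X,Y)D^{\tau}(Z)$ and $h(Y,D^{\tau}(Z))X$ do not belong to the curvature $R^{\tau}(Z,X)Y$ of $\nabla^{\tau}$, but appear as cross-terms — the first because $\nabla^{\mathcal{T}}_{Z}$ differentiates the $\eta^{\tau}$-coefficient $-h(X,Y)$ of $\nabla^{\mathcal{T}}_{X}\Phi(Y)$ while $\nabla^{\mathcal{T}}_{Z}\eta^{\tau}$ carries a $\Phi(D^{\tau}(Z))$-part, the second because $\nabla^{\mathcal{T}}_{X}$ differentiates the $\xi$-coefficient $-h(D^{\tau}(Z),Y)$ of $\nabla^{\mathcal{T}}_{Z}\Phi(Y)$ while $\nabla^{\mathcal{T}}_{X}\xi=\Phi(X)$. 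All remaining terms cancel against $\nabla^{\mathcal{T}}_{[Z,X]}\Phi(Y)$. To avoid computing the $\xi$- and $\eta^{\tau}$-valued parts in full, I would lean on the hypothesis in the form $R^{\mathcal{T}}(Z,X)\xi\in\R\,\xi$ together with metric skew-adjointness, so that only the $\mathrm{An}\,(\tau)$-component has to be tracked throughout.
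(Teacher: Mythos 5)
Your proof is correct and follows essentially the same route as the paper: reduce $(1)\Leftrightarrow(2)$ by decomposing $V,W$ through $P^{\tau}$ and using $\Phi(Z)=\xi$, then compute the components of $R^{\mathcal{T}}(Z,X)\Phi(Y)$ in the $\tau$-splitting via (\ref{17102024A}); your middle-component formula and the resulting equivalence with (\ref{300720258}) agree with the paper's computation. The only (minor, and arguably cleaner) deviation is that you dispose of the bottom component by skew-adjointness of $R^{\mathcal{T}}$ together with $\mathbf{h}(\Phi(W),\xi)=0$, whereas the paper computes that component explicitly and cancels it using Proposition~\ref{21072025y}$(2.1)$.
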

\begin{proof}
Clearly, $(1)\Leftrightarrow (2)$; it suffices to note that
\begin{align*}
R^{\mathcal{T}}(Z,V)\Phi(W)&=R^{\mathcal{T}}\Big(Z,P^{\tau}(V)+\tau(V)Z\Big)\Phi(P^{\tau}(W)+\tau(W)Z)\\
&=R^{\mathcal{T}}\Big(Z,P^{\tau}(V)\Big)\Phi(P^{\tau}(W))+\tau(W)R^{\mathcal{T}}\Big(Z,P^{\tau}(V)\Big)\xi.
\end{align*}
Now, using $(\ref{17102024A})$, we obtain
\begin{align*}
  R^{\mathcal{T}}(Z,X)\Phi(Y) & \overset{\tau}{=} \nabla^{\mathcal{T}}_Z \begin{pmatrix} -h\big(Y,D^{\tau}(X)\big)\\ \nabla^{\tau}_X Y \\ -h\big(X,Y\big)  \end{pmatrix} -\nabla^{\mathcal{T}}_X \begin{pmatrix} -h\big(Y,D^{\tau}(Z)\big)\\\nabla^{\tau}_Z Y  \\  0 \end{pmatrix} -\begin{pmatrix} -h\big(Y,D^{\tau}([Z,X])\big) \\ \nabla^{\tau}_{[Z,X]}\big(Y\big) \\ -h\big(Y,P^{\tau}([Z,X])\big)  \end{pmatrix}.
\end{align*}
Thus, again using $(\ref{17102024A})$, we find that the third component  is given by
$$
-Z\big(h\big(X,Y\big)\big)+h\big(X,Y\big)+h\big(\nabla^{\tau}_Z Y,X\big)+h\big(Y,P^{\tau}([Z,X])\big).
$$
Since $R^{\mathcal{T}}(V,W)\xi$ is collinear with $\xi$, as a consequence of Proposition \ref{21072025y}$(2.1)$, we conclude that the third coordinate vanishes identically. Therefore, the condition stated in point $(1)$ is equivalent to the vanishing of the second component of $R^{\mathcal{T}}(Z,X)\Phi(Y)$. A direct computation shows that this is equivalent to the following equation:
$$
h\big(X,Y\big)D^{\tau}(Z)-h\big(Y,D^{\tau}(Z)\big)X=R^{\tau}(Z,X)Y.
$$
\end{proof}

\begin{remark}\label{22082025ryt}
    {\rm Equation (\ref{300720258}) has at most one solution. Indeed, letting $(E_{1}, \dots, E_{m})$ be a local frame of $\mathrm{An}\, (\tau)$ such that $h(E_{i},E_{j})=\delta_{ij}$, we deduce that Equation (\ref{300720258}) implies 
$$
(m-1)D^{\tau}(Z)=\displaystyle\sum_{i=1}^{m}R^{\tau}(Z,E_i)E_i.
$$
As far as we can verify, in the absence of additional assumptions on the curvature, the existence of a solution cannot be guaranteed. Therefore, the existence of a lightlike extension vector bundle for which $R^{\mathcal{T}}(Z,V)\Phi(W)$ is collinear with $\xi$ cannot, in general, be ensured. 
According to Remark \ref{190825A}, for lightlike manifolds that are locally bundles of scales of Riemannian conformal manifolds, condition $(1)$ in Proposition \ref{26067872} is satisfied.
}
\end{remark}

From now on, assume that we are given a lightlike extension vector bundle $(\mathcal{T},\mathbf{h},\nabla^{\mathcal{T}},\xi)$ for $(N,h,Z)$ such that $R^{\mathcal{T}}(V,W)\xi$ and $R^{\mathcal{T}}(Z,V)\Phi(W)$ are collinear with $\xi$ for every $V,W\in\mathfrak{X}(N)$. Under this assumptions, we can define the map $\mathcal{W}\in\Gamma\left(\Lambda^2\mathcal{E}^*\otimes L(\mathcal{E},\mathcal{E})\right)$ by
$$
\mathcal{W}([U],[V])[W]:=\left[\Phi^{-1}\big(R^{\mathcal{T}}(U,V)\Phi(W)\big)\right].
$$
The map $\mathcal{W}$ is well-defined as a consequence of the collinearity conditions stated above, and the following Ricci-type contraction is well-posed:
$$\mathrm{trace}\big([U]\mapsto \mathcal{W}([U],[V])[W]\big).
$$
Now, given a $1$-form $\tau\in S(N,h,Z)$ and  a local frame $(E_{1}, \dots, E_{m})$  of $\mathrm{An}\,(\tau)$ such that $h(E_{i},E_{j})=\delta_{ij}$, this Ricci-type contraction can be computed as
\begin{equation}\label{26072025t}
\displaystyle\sum_{i=1}^m h\big(\mathcal{W}([E_i],[V])[W],[E_i]\big)=\displaystyle\sum_{i=1}^m h\left(\left[\Phi^{-1}\big(R^{\mathcal{T}}(E_i,V)\Phi(W)\big)\right],[E_i]\right).
\end{equation}
Hence, computing the Ricci-type contraction in $(\ref{26072025t})$ is equivalent to computing
$$
\displaystyle\sum_{i=1}^m h\left(\Phi^{-1}\big(R^{\mathcal{T}}(E_i,X)\Phi(Y)\big),E_i\right),
$$
where $X,Y\in\Gamma\left(\mathrm{An}\, (\tau)\right)$. 
As a consequence of Equation $(\ref{17102024A})$, we obtain the following expression for the Ricci-type contraction:
\begin{align*}
  \displaystyle\sum_{i=1}^m h\left(\Phi^{-1}\big(R^{\mathcal{T}}(E_i,X)\Phi(Y)\big),E_i\right) & = -(m-2)h(D^{\tau}(X),Y)+ \mathrm{Ric}^{\tau}(X,Y)-h(X,Y)\mathrm{tr_{\tau} }D^{\tau},
\end{align*}
where 
$$
\mathrm{tr_{\tau} }D^{\tau}:=\sum\limits_{i=1}^m h(D^{\tau}(E_i),E_i)\quad  \textrm{ and } \quad \mathrm{Ric}^{\tau}(X,Y):=\sum\limits_{i=1}^m h\big(R^{\tau}(E_{i},X) Y,E_i\big).
$$
In general, the tensor $\mathrm{Ric}^{\tau}$ is not necessarily symmetric, in contrast with the Riemannian case.

\begin{proposition}\label{263473621a}
Let $(N,h,Z)$ be a lightlike manifold, and let $(\mathcal{T},\mathbf{h},\nabla^{\mathcal{T}},\xi)$ be a lightlike extension vector bundle for $(N,h,Z)$. Assume that $m\geq 3$, and that $R^{\mathcal{T}}(V,W)\xi$ and $R^{\mathcal{T}}(Z,V)\Phi(W)$ are collinear with $\xi$ for every $V,W\in\mathfrak{X}(N)$. Then, the following statements are equivalent:
  \begin{itemize}
     \item[$(1)$] The Ricci-type contraction $(\ref{26072025t})$ vanishes.
     \item[$(2)$] For each $\tau\in S(N,h,Z)$, the map $D$ satisfies
     \begin{equation}\label{12234ewrfwe}
     h(D^{\tau}(X),Y)=\dfrac{1}{m-2}\left(\mathrm{Ric}^{\tau}(X,Y)-\dfrac{S^{\tau}}{2(m-1)}h(X,Y)\right),
     \end{equation}
    for every $X,Y\in\Gamma\left(\mathrm{An}\, (\tau)\right)$, where $S^{\tau}:= \sum\limits_{i=1}^m \mathrm{Ric}^{\tau}(E_i,E_i)$ and $(E_{1}, \dots, E_{m})$ is a local frame of $\mathrm{An}\, (\tau)$ such that $h(E_{i},E_{j})=\delta_{ij}$.
    \end{itemize}
\end{proposition}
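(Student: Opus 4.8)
The plan is to reduce everything to the contraction identity obtained immediately before the statement, namely that for each $\tau\in S(N,h,Z)$, each local $h$-orthonormal frame $(E_1,\dots,E_m)$ of $\mathrm{An}\,(\tau)$, and all $X,Y\in\Gamma(\mathrm{An}\,(\tau))$,
$$
\sum_{i=1}^m h\!\left(\Phi^{-1}\big(R^{\mathcal{T}}(E_i,X)\Phi(Y)\big),E_i\right)=-(m-2)\,h(D^{\tau}(X),Y)+\mathrm{Ric}^{\tau}(X,Y)-h(X,Y)\,\mathrm{tr_{\tau}}D^{\tau}.
$$
Since $\left.P^{\tau}\right|_{\mathrm{An}\,(\tau)}\colon\mathrm{An}\,(\tau)\to\mathcal{E}$ is a vector bundle isomorphism and the Ricci-type contraction $(\ref{26072025t})$ is $h$-bilinear in its two $\mathcal{E}$-arguments, condition $(1)$ holds if and only if the right-hand side above vanishes for all $X,Y\in\Gamma(\mathrm{An}\,(\tau))$ and all $\tau$; denote this equation by $(\star)$. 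Hence it suffices to prove $(\star)\Leftrightarrow(\ref{12234ewrfwe})$.

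For $(1)\Rightarrow(2)$ I would first extract the trace of $D^{\tau}$. Putting $X=Y=E_j$ in $(\star)$ and summing over $j$, and using $\sum_j h(E_j,E_j)=m$, $\sum_j h(D^{\tau}(E_j),E_j)=\mathrm{tr_{\tau}}D^{\tau}$, $\sum_j\mathrm{Ric}^{\tau}(E_j,E_j)=S^{\tau}$, equation $(\star)$ becomes $-2(m-1)\,\mathrm{tr_{\tau}}D^{\tau}+S^{\tau}=0$, so $\mathrm{tr_{\tau}}D^{\tau}=\frac{S^{\tau}}{2(m-1)}$ (using $m\geq 2$ to divide by $m-1$). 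Substituting this back, $(\star)$ reads $-(m-2)h(D^{\tau}(X),Y)+\mathrm{Ric}^{\tau}(X,Y)-\frac{S^{\tau}}{2(m-1)}h(X,Y)=0$, and dividing by $m-2$ — which is allowed precisely because $m\geq 3$ — yields $(\ref{12234ewrfwe})$.

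For $(2)\Rightarrow(1)$ I would run the computation in reverse. First trace $(\ref{12234ewrfwe})$ itself: with $X=Y=E_i$ and summing, $\mathrm{tr_{\tau}}D^{\tau}=\frac{1}{m-2}\!\left(S^{\tau}-\frac{m\,S^{\tau}}{2(m-1)}\right)=\frac{S^{\tau}}{2(m-1)}$. Then insert $(\ref{12234ewrfwe})$ together with this value of $\mathrm{tr_{\tau}}D^{\tau}$ into the right-hand side of the contraction identity: the term $-(m-2)h(D^{\tau}(X),Y)=-\mathrm{Ric}^{\tau}(X,Y)+\frac{S^{\tau}}{2(m-1)}h(X,Y)$ cancels against $\mathrm{Ric}^{\tau}(X,Y)-h(X,Y)\,\mathrm{tr_{\tau}}D^{\tau}$, so the contraction $(\ref{26072025t})$ vanishes, which is $(1)$.

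There is no genuine analytic difficulty here; the whole content is the linear-algebraic extraction of a Schouten-type tensor from a prescribed trace, exactly mirroring the conformal case. The only points deserving care are that the quantities $\mathrm{tr_{\tau}}D^{\tau}$, $\mathrm{Ric}^{\tau}$ and $S^{\tau}$ are genuinely independent of the chosen $h$-orthonormal frame of $\mathrm{An}\,(\tau)$ (being traces of tensors), so the stated equivalence is well posed, and that the hypothesis $m\geq 3$ enters in exactly one place, when one divides by $m-2$; the case $m=2$ is therefore excluded from this normalization and must be handled separately, as the paper already observes.
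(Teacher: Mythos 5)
Your argument is correct and follows essentially the same route as the paper: both reduce condition $(1)$ to the identity $(m-2)h(D^{\tau}(X),Y)=\mathrm{Ric}^{\tau}(X,Y)-h(X,Y)\,\mathrm{tr_{\tau}}D^{\tau}$ via the contraction formula preceding the statement, extract $\mathrm{tr_{\tau}}D^{\tau}=\frac{S^{\tau}}{2(m-1)}$ by tracing, and substitute back, with $m\geq 3$ used only to divide by $m-2$. Your write-up is merely more explicit about the converse direction and the frame-independence of the traces; no discrepancy with the paper's proof.
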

\begin{proof}
From the computations above, it is clear that $(1)$ is equivalent to
\begin{equation}\label{26662266}
(m-2)h(D^{\tau}(X),Y)= \mathrm{Ric}^{\tau}(X,Y)-h(X,Y)\mathrm{tr_{\tau} }D^{\tau}.
\end{equation}
Taking the trace of both sides with respect to the frame $(E_{1}, \dots, E_{m})$, we obtain
$$
\mathrm{tr_{\tau} }D^{\tau}=\dfrac{S^{\tau}}{2(m-1)}.
$$
Substituting this expression into Equation $(\ref{26662266})$, we get
$$
(m-2)h(D^{\tau}(X),Y)= \mathrm{Ric}^{\tau}(X,Y)-\dfrac{S^{\tau}}{2(m-1)}h(X,Y),
$$
which completes the proof.
\end{proof}
\begin{remark}\label{dhfihrihfihrihs}
{\rm It is worth noting that Equation $(\ref{12234ewrfwe})$ closely resembles the expression of the Schouten tensor associated with a Riemannian metric. This motivates our definition $
S^{\tau}:= \sum\limits_{i=1}^m \mathrm{Ric}^{\tau}(E_i,E_i),$ so that $S^{\tau}$ plays a role analogous to that of the scalar curvature in the classical definition of the Schouten tensor for a Riemannian metric.}
\end{remark}
\begin{theorem}\label{21082025trg}
Let \((N^{m+1}, h, Z)\) be a lightlike manifold with \(m \geq 3\), and assume that \(A_Z = \mathrm{Id}\). Then there exists at most one lightlike extension vector bundle \((\mathcal{T}, \mathbf{h}, \nabla^{\mathcal{T}}, \xi)\) for $(N^{m+1},h,Z)$ satisfying the following conditions:
\begin{enumerate}
    \item[$(1)$] \(R^{\mathcal{T}}(V, W)\xi\) is collinear with \(\xi\) for every \(V, W \in \mathfrak{X}(N)\).
    \item[$(2)$] $R^{\mathcal{T}}(Z,V)\Phi(W)$ is collinear with $\xi$ for every $V,W\in\mathfrak{X}(N)$.
    \item[$(3)$] The Ricci-type contraction of \(\mathcal{W} \in \Gamma\left(\Lambda^2 \mathcal{E}^* \otimes L(\mathcal{E}, \mathcal{E})\right)\) vanishes.
\end{enumerate}
Moreover, such a lightlike extension vector bundle exists if and only if Equation $(\ref{300720258})$ is solvable for every $\tau\in S(N,h,Z)$. 
\end{theorem}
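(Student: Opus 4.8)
The plan is to reduce the whole statement to the equivalence, established in Theorem~\ref{140825A} and the dictionary of Section~\ref{14082025847564857}, between lightlike extension vector bundles for $(N,h,Z)$ and lightlike-compatible structures $(\nabla,D)$ on $(N,h,Z)$, and to exploit that conditions $(1)$, $(2)$, $(3)$ are intrinsic, i.e. do not refer to any $\tau\in S(N,h,Z)$. By Propositions~\ref{21072025y}, \ref{26067872} and~\ref{263473621a}, these conditions amount, respectively, to: $\nabla^{\tau}$ satisfies $(2.1)$ and $(2.2)$ in Proposition~\ref{21072025y}; $D^{\tau}$ solves Equation~$(\ref{300720258})$; and, for $m\geq 3$, $D^{\tau}$ satisfies the Schouten-type Equation~$(\ref{12234ewrfwe})$. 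Moreover, since in each case the curvature condition on the left-hand side is $\tau$-independent, it is enough to verify the corresponding equation for one $\tau$ in order to have it for all of them.

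\emph{Uniqueness.} Let $(\mathcal{T}_{1},\mathbf{h}_{1},\nabla^{\mathcal{T}_{1}},\xi_{1})$ and $(\mathcal{T}_{2},\mathbf{h}_{2},\nabla^{\mathcal{T}_{2}},\xi_{2})$ both satisfy $(1)$, $(2)$, $(3)$, with induced lightlike-compatible structures $(\nabla_{i},D_{i})$. By $(1)$ and the last assertion of Proposition~\ref{21072025y}, $\nabla_{1}=\nabla_{2}=:\nabla$; write $R^{\tau}$ for the curvature of $\nabla^{\tau}$. By $(2)$, Proposition~\ref{26067872} and Remark~\ref{22082025ryt}, $D_{i}^{\tau}(Z)=\frac{1}{m-1}\sum_{j}R^{\tau}(Z,E_{j})E_{j}$ for any $h$-orthonormal local frame $(E_{j})$ of $\mathrm{An}\,(\tau)$, so $D_{1}^{\tau}(Z)=D_{2}^{\tau}(Z)$. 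By $(3)$ and Proposition~\ref{263473621a} (using $m\geq 3$), the values $h\big(D_{i}^{\tau}(X),Y\big)$ with $X,Y\in\Gamma(\mathrm{An}\,(\tau))$ are fixed by Equation~$(\ref{12234ewrfwe})$, and non-degeneracy of $h$ on $\mathrm{An}\,(\tau)$ then forces $D_{1}^{\tau}|_{\mathrm{An}\,(\tau)}=D_{2}^{\tau}|_{\mathrm{An}\,(\tau)}$. Since $TN\overset{\tau}{=}\underline{\R}\oplus\mathrm{An}\,(\tau)$ by Equation~$(\ref{02072025A})$, we get $D_{1}=D_{2}$, hence $\mathcal{T}_{1}$ and $\mathcal{T}_{2}$ are isomorphic by Theorem~\ref{140825A}. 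This is the ``at most one'' assertion.

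\emph{Existence.} If such a bundle exists, condition $(2)$ and Proposition~\ref{26067872} show that its $D^{\tau}$ solves $(\ref{300720258})$ for every $\tau$, so $(\ref{300720258})$ is solvable; this is the necessity in the last assertion. Conversely, assume $(\ref{300720258})$ is solvable for every $\tau$. Fix $\tau_{0}\in S(N,h,Z)$. Define $\nabla^{\tau_{0}}$ on $\mathrm{An}\,(\tau_{0})$ by the Koszul-type formula displayed in Remark~\ref{nabla1234} (equivalently: condition $(2.1)$ in the $Z$-direction, together with the ordinary Koszul formula along $\mathrm{An}\,(\tau_{0})$). A direct check shows this formula is tensorial in the direction, is $\mathcal{C}^{\infty}$-linear in the second argument and satisfies the Leibniz rule --- so it is a linear connection on $\mathrm{An}\,(\tau_{0})$ --- and that it satisfies $(2.1)$ and $(2.2)$; it is metric along $\mathrm{An}\,(\tau_{0})$ by the usual computation, and metric in the $Z$-direction precisely because $(\mathcal{L}_{Z}h)(X,Y)=2h(X,Y)$ for $X,Y\in\Gamma(\mathrm{An}\,(\tau_{0}))$, which by $(\ref{16072025D})$ is the hypothesis $A_{Z}=\mathrm{Id}$ (cf. Remark~\ref{21072025ghj}). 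Next, let $D^{\tau_{0}}(Z)$ be the unique (Remark~\ref{22082025ryt}) solution of $(\ref{300720258})$ for $\tau_{0}$, define $D^{\tau_{0}}$ on $\mathrm{An}\,(\tau_{0})$ via $(\ref{12234ewrfwe})$ (well posed since $m\geq 3$ and $h$ is non-degenerate on $\mathrm{An}\,(\tau_{0})$), and extend it to $TN\overset{\tau_{0}}{=}\underline{\R}\oplus\mathrm{An}\,(\tau_{0})$ by linearity. Now apply the extension procedure of Remark~\ref{130825A} with $\alpha=\tau_{0}$ to obtain a pair $(\nabla,D)$: Remark~\ref{130825A} guarantees that $(\nabla,D)$ satisfies the transformation laws $(\ref{16072025J})$ and $(\ref{16072025P})$ --- hence is a lightlike-compatible structure --- and, since $L_{\tau_{0},\tau_{0}}=0$, it restores $(\nabla^{\tau_{0}},D^{\tau_{0}})$ at $\tau_{0}$. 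By Theorem~\ref{140825A}, $(\nabla,D)$ is the lightlike-compatible structure of a lightlike extension vector bundle $(\mathcal{T},\mathbf{h},\nabla^{\mathcal{T}},\xi)$ for $(N,h,Z)$, whose $\tau_{0}$-decomposition recovers $(\nabla^{\tau_{0}},D^{\tau_{0}})$. Finally: $(1)$ holds because $\nabla^{\tau_{0}}$ satisfies $(2.1)$ and $(2.2)$ (Proposition~\ref{21072025y}); given $(1)$, $(2)$ holds because $D^{\tau_{0}}$ solves $(\ref{300720258})$ (Proposition~\ref{26067872}); and given $(1)$, $(2)$ and $m\geq 3$, $(3)$ holds because $D^{\tau_{0}}$ satisfies $(\ref{12234ewrfwe})$ (Proposition~\ref{263473621a}).

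\emph{Main obstacle.} The delicate point is the assembly in the converse direction: the three prescriptions for $\nabla^{\tau_{0}}$, for $D^{\tau_{0}}(Z)$ and for $D^{\tau_{0}}|_{\mathrm{An}\,(\tau_{0})}$ come from distinct pieces of the tractor curvature, and it is not obvious a priori that, as $\tau$ varies, they fit together into something obeying the transformation laws $(\ref{16072025J})$--$(\ref{16072025P})$. This is bypassed by letting Remark~\ref{130825A} carry out the $\tau$-bookkeeping and by using that conditions $(1)$--$(3)$ are intrinsic, so that they may be tested at the single $1$-form $\tau_{0}$; the only genuine uses of the hypotheses are that $A_{Z}=\mathrm{Id}$ renders $\nabla^{\tau_{0}}$ metric along $Z$, and that $m\geq 3$ makes $(\ref{12234ewrfwe})$ both meaningful and determining.
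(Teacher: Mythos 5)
The paper states this theorem without a written proof, presenting it as the direct synthesis of Propositions~\ref{21072025y}, \ref{26067872} and~\ref{263473621a} together with Remarks~\ref{22082025ryt} and~\ref{130825A} and the bijection of Theorem~\ref{140825A}; your argument is a correct and complete fleshing-out of exactly that intended route, including the key observations that the three curvature conditions are $\tau$-independent (so may be tested at a single $\tau_{0}$) and that $A_{Z}=\mathrm{Id}$ is what makes the Koszul-defined $\nabla^{\tau_{0}}$ metric in the $Z$-direction. No gaps.
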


\begin{remark}
    {\rm According to Section \ref{27102024y}, lightlike Cartan geometries can be equivalently described in terms of tractor bundles. Therefore, the result above establishes the uniqueness of a lightlike Cartan geometry $(p\colon \mathcal{P} \to N, \omega)$ such that $h^{\omega} = h$ and $Z^{\omega} = Z$, if it exists. This Cartan geometry corresponds naturally to the lightlike extension vector bundle $(\mathcal{T}, \mathbf{h}, \nabla^{\mathcal{T}}, \xi)$, which satisfies the technical assumptions of Theorem \ref{21082025trg}.  }
\end{remark}

\smallskip

\begin{remark}\label{010820256A}
{\rm Let $(\mathcal{L},h,Z)$ be the fiber bundle of scales of the Riemannian conformal manifold $(M,[g])$. Thus, the projection $\pi:\mathcal{L}\rightarrow M$ defines a principal fiber bundle with structure group $\R_{>0}$. There is a natural bijection between global sections of the scale bundle and Riemannian metrics in the conformal class $[g]$, given by
$$
\Gamma\big(\mathcal{L}\big)\,\longleftrightarrow [g],\, \quad s\longleftrightarrow s^*h.
$$
For any two sections $s_1,s_2\in\Gamma\big(\mathcal{L}\big)$, there exists a function $u\in\mathcal{C}^{\infty}(M,\R)$ such that $s_2=e^{2u}s_1$. As a result, the corresponding metrics on $M$ satisfy $s_2^*h=e^{2u}s_1^*h.$ Furthermore, each global section $s\in\Gamma(\mathcal{L})$ induces a global trivialization $\Psi_s:\mathcal{L}  \longrightarrow M\times\R_{>0}$, and a $1$-form $\tau\in S(\mathcal{L},h,Z)$ defined by $\tau:=\Psi_s^{*}dt$, where $t$ denotes the standard coordinate on $\R_{>0}$. It is worth noting that, in this case, $\mathrm{An}\, (\tau)$ defines an integrable distribution.

With this in mind, it becomes clear that the geometric structures introduced throughout this Section correspond to well-established objects in conformal geometry. In the special case where $R^{\mathcal{T}}(V,W)\xi$ is collinear with $\xi$, Equation (\ref{16072025J}), restricted to directions transverse to $Z$, naturally descends to $M$ through sections of the scale bundle. This reproduces the classical relation between the Levi-Civita connections of conformally related metrics \cite{Besse}. Similarly, Equation (\ref{16072025P}) recovers the standard conformal transformation law satisfied by the endomorphisms associated with the Schouten tensors of two conformally related metrics. Furthermore, when $R^{\mathcal{T}}(Z,V)\Phi(W)$ is collinear with $\xi$ and the Ricci-type contraction $(\ref{26072025t})$ vanishes, the tensor defined in  $(\ref{12234ewrfwe})$ corresponds precisely to the Schouten tensor. These observations confirm that the normalization conditions introduced here for lightlike Cartan geometries are fully compatible with those of normal conformal Cartan connections \cite{CG03}.}
\end{remark}

\begin{remark}
{\rm As established in Theorem $\ref{21082025trg}$, for $m\geq 3$, the imposed normalization conditions uniquely determine the map $D$ whenever Equation $(\ref{300720258})$ is solvable. However, in dimension $m=2$, the situation differs, as $D(X)$ is not determined by $(\ref{12234ewrfwe})$. In light of Remark $\ref{010820256A}$ and given the correspondence between the tensor $(\ref{12234ewrfwe})$ and the Schouten tensor, it is natural to expect that Schouten-type tensors (see \cite{M1}) and Möbius structures (see \cite{MP2} and \cite{Ran}) will play a central role in the study of the case $m=2$, just as they do in two-dimensional conformal geometry, where the Schouten tensor is not well-defined.}
\end{remark}

\vspace{2mm}

\noindent {\bf Availability of data and materials:} No data or materials are associated with this study.

\vspace{2mm}

\noindent{\bf Declarations Conflict of interest:} The authors state that there is no conflict of interest.

\end{document}